\newcommand{\nc}{\newcommand}
\nc{\les}{\lesssim}
\nc{\nit}{\noindent}
\nc{\nn}{\nonumber}
\nc{\D}{\partial}
\nc{\diff}[2]{\frac{d #1}{d #2}}
\nc{\diffn}[3]{\frac{d^{#3} #1}{d {#2}^{#3}}}
\nc{\pdiff}[2]{\frac{\partial #1}{\partial #2}}
\nc{\pdiffn}[3]{\frac{\partial^{#3} #1}{\partial{#2}^{#3}}}
\nc{\abs}[1] {\lvert #1 \rvert}
\nc{\cAc}{{\cal A}_c}
\nc{\cE}{{\cal E}}
\nc{\cF}{{\cal F}}
\nc{\cP}{{\cal P}}
\nc{\cV}{{\cal V}}
\nc{\cQ}{{\cal Q}}
\nc{\cGin}{{\cal G}_{\rm in}}
\nc{\cGout}{{\cal G}_{\rm out}}
\nc{\cO}{{\cal O}}
\nc{\Lav}{{\cal L}_{\rm av}}
\nc{\cL}{{\cal L}}
\nc{\cB}{{\cal B}}
\nc{\cZ}{{\cal Z}}
\nc{\cR}{{\cal R}}
\nc{\cT}{{\cal T}}
\nc{\cY}{{\cal Y}}
\nc{\cX}{{\cal X}}
\nc{\cXT}{{{\cal X}(T)}}
\nc{\cBT}{{{\cal B}(T)}}
\nc{\vD}{{\vec \mathcal{D}}}
\nc{\efield}{\mathcal{E}}
\nc{\vE}{{\vec \efield}}
\nc{\vB}{{\vec \mathcal{B}}}
\nc{\vH}{{\vec \mathcal{H}}}
\nc{\ty}{{\tilde y}}
\nc{\tu}{{\tilde u}}
\nc{\tV}{{\tilde V}}
\nc{\Pc}{{\bf P_c}}
\nc{\bx}{{\bf x}}
\nc{\bX}{{\bf X}}
\nc{\bXYZ}{{\bf XYZ}}
\nc{\bY}{{\bf Y}}
\nc{\bF}{{\bf F}}
\nc{\bS}{{\bf S}}
\nc{\dV}{{\delta V}}
\nc{\dE}{{\delta E}}
\nc{\TT}{{\Theta}}
\nc{\dPsi}{{\delta\Psi}}
\nc{\order}{{\cal O}}
\nc{\Rout}{R_{\rm out}}
\nc{\eplus}{e_+}
\nc{\eminus}{e_-}
\nc{\epm}{e_\pm}
\nc{\eps}{\varepsilon}
\nc{\vnabla}{{\vec\nabla}}
\nc{\G}{\Gamma}
\nc{\w}{\omega}
\nc{\mh}{h}
\nc{\mg}{g}
\nc{\vphi}{\varphi}
\nc{\tlambda}{\tilde\lambda}
\nc{\be}{\begin{equation}}
\nc{\ee}{\end{equation}}
\nc{\ba}{\begin{eqnarray}}
\nc{\ea}{\end{eqnarray}}
\nc{\g}{\gamma}
\nc{\ol}{\overline}
\newtheorem{theorem}{Theorem}[section]
\newtheorem{lemma}[theorem]{Lemma}
\newtheorem{prop}[theorem]{Proposition}
\newtheorem{corollary}[theorem]{Corollary}
\newtheorem{defin}[theorem]{Definition}
\newtheorem{rmk}[theorem]{Remark}
\nc{\pT}{\partial_T}
\nc{\pz}{\partial_z}
\nc{\pt}{\partial_t}
\nc{\la}{\langle}
\nc{\ra}{\rangle}
\nc{\infint}{\int_{-\infty}^{\infty}}
\nc{\halfwidth}{6.5cm}
\nc{\figwidth}{10cm}
\newcommand{\f}{\frac}
\nc{\nlayers}{L} \nc{\nsectors}{M}
\nc{\indicator}{\mathbf{1}}
\nc{\Rhole}{R_{\rm hole}}
\nc{\Rring}{R_{\rm ring}}
\nc{\neff}{n_{\rm eff}}
\nc{\Frem}{F_{\rm rem}}
\nc{\R}{\mathbb R}
\nc{\Z}{\mathbb Z}
\nc{\DD}{\Delta}
\nc{\cD}{\mathcal D}
\nc{\lnorm}{\left\|}
\nc{\rnorm}{\right\|}
\nc{\rnormp}{\right\|_{\ell^{p,\eps}}}
\nc{\rar}{\rightarrow}
\date{\today}
\begin{document}

\begin{abstract}

	We investigate $L^1(\R^n)\to L^\infty(\R^n)$ dispersive estimates for the   Schr\"odinger operator $H=-\Delta+V$ when there is an eigenvalue at zero energy and $n\geq 5$ is odd. In particular, we show  that if there is an eigenvalue at zero energy then there is a time dependent, rank one operator $F_t$ satisfying $\|F_t\|_{L^1\to L^\infty} \lesssim |t|^{2-\f n2}$ for $|t|>1$  such that
$$\|e^{itH}P_{ac}-F_t\|_{L^1\to L^\infty} \les |t|^{1-\f n2},\,\,\,\,\,\text{ for } |t|>1.$$
With stronger decay conditions on the potential it is possible to generate an operator-valued
expansion for the evolution, taking the form
\begin{align*}
	e^{itH} P_{ac}(H)=|t|^{2-\f n2}A_{-2}+
	|t|^{1-\f n2} A_{-1}+|t|^{-\f n2}A_0,
\end{align*}
with $A_{-2}$ and $A_{-1}$ finite rank operators
mapping $L^1(\R^n)$ to $L^\infty(\R^n)$ while $A_0$ maps weighted
$L^1$ spaces to weighted $L^\infty$ spaces.
The leading order terms $A_{-2}$ and $A_{-1}$ vanish when certain orthogonality conditions
between the potential $V$ and the zero energy eigenfunctions are satisfied.
We show that under the same orthogonality conditions, the  remaining  $|t|^{-\f n2}A_0$ term
also exists as a map from $L^1(\R^n)$ to $L^\infty(\R^n)$, hence $e^{itH}P_{ac}(H)$ satisfies
the same dispersive bounds as the free evolution despite
the eigenvalue at zero.

\end{abstract}

\title[Dispersive Estimates for Schr\"odinger Operators: Odd dimensions]{\textit{Dispersive Estimates for higher
dimensional Schr\"odinger Operators with threshold
eigenvalues I: The odd dimensional case}}

\author[M.~J. Goldberg, W.~R. Green]{Michael Goldberg and William~R. Green}

\address{Department of Mathematics \\
University of Cincinnati\\
Cincinnati, OH 45221-0025}
\email{Michael.Goldberg@uc.edu}
\address{Department of Mathematics\\
Rose-Hulman Institute of Technology \\
Terre Haute, IN 47803 U.S.A.}
\email{green@rose-hulman.edu}

\thanks{This  work  
was  partially  supported  by  a  grant  from  the  Simons  Foundation (Grant  Number 281057
to  the first author.)
The second author acknowledges the support of an AMS Simons Travel grant and a
Rose-Hulman summer professional development grant.}

\maketitle

\section{Introduction}

The free Schr\"odinger evolution on $\R^n$,
$$
	e^{-it\Delta}f(x)= \frac{1}{(4\pi i t)^{\frac{n}{2}}} \int_{\R^n} e^{-i|x-y|^2/4t}f(y)\, dy
$$
maps $L^1(\R^n)$ to $L^\infty(\R^n)$ with norm bounded by
$|4\pi t|^{-\f n2}$.  As an immediate consequence, solutions whose initial data belong
to $L^1(\R^n) \cap L^2(\R^n)$ experience time decay with respect to the supremum
norm even while a conservation law holds the $L^2$ norm constant.  While both the
dispersive bound and the conservation law can be verified with elementary Fourier analysis,
they act in concert to imply Strichartz estimates for the free Schr\"odinger equation,
which are not readily apparent in either the physical-space or frequency-space description of
the propagator.

The stability of dispersive estimates under pertubration by a short range potential,
that is for a Schr\"odinger operator of the form $H = -\Delta + V$, where $V$ is 
real-valued and decays at spatial infinity, is a well-studied problem.
Where possible,
the estimate is presented in the form
\begin{equation} \label{eq:dispersive}
\lnorm e^{itH}P_{ac}(H)\rnorm_{L^1(\R^n)\to L^\infty(\R^n)} \lesssim |t|^{-n/2}.
\end{equation}
Projection onto the continuous spectrum
of the spectrum of $H$ is needed as the perturbed Schr\"odinger operator
may possess  pure point spectrum that experiences no decay at large times.

The first results in this direction \cite{Rauch,JenKat,Jen,Mur,Jen2} studied mappings
between weighted $L^2(\R^n)$ in place of $L^1(\R^n)$ and $L^\infty(\R^n)$.
Estimates of type~\eqref{eq:dispersive} are proved in
\cite{JSS,Wed,RodSch,GS,Sc2,CCV,EG1,BecGol,Gr} in various dimensions, and with
different characterizations of the potential $V(x)$.
For a more detailed history, see the survey paper
\cite{Scs}.

If the potential satisfies a pointwise
bound $|V(x)| \les  (1+|x|)^{-\beta}$ for some $\beta > 1$, then  the
spectrum of $H$ is purely absolutely continuous on $(0,\infty)$, see
\cite[Theorem XIII.58]{RS1}.  This leaves
two principal areas of concern: a high-energy region when the spectral parameter $\lambda$ satisfies $\lambda>\lambda_1>0$ and a low-energy region $0<\lambda<\lambda_1$ for some fixed constant $\lambda_1>0$.  

It was
observed by the first author and Visan~\cite{GV} in dimensions $n \ge 4$,
that it is possible for the dispersive estimate to fail as $t \to 0$ if the potential is not sufficiently
differentiable even for a
bounded compactly supported potential.  
The failure of the dispersive estimate is a high energy phenomenon.  Positive results have been obtained
in dimensions $n=4,5$ by Cardoso, Cuevas, and Vodev  \cite{CCV}
using semi-classical techniques assuming that
$V$ has $\frac{n-3}{2}+\epsilon$ derivatives, and
by Erdo\smash{\u{g}}an and the second author in dimensions $n=5,7$,
\cite{EG1} under the assumption that $V$ is differentiable
up to order $\frac{n-3}{2}$, which is the optimal smoothness requirement in light of the counterexample
in \cite{GV}. 
The decay assumptions on the
potential in \cite{EG1} were later relaxed by the
second author in \cite{Gr}.
The much earlier result of Journ\'e, Soffer, 
Sogge~\cite{JSS} requires that $\widehat{V} \in L^1(\R^n)$ in lieu of a specific
number of derivatives.


In dimensions $n\leq 4$ in addition to zero energy
eigenvalues,
there is another class of obstructions at zero energy
called resonances.  Resonances are distributional solutions
of $H\psi=0$ with $\psi \notin L^2(\R^n)$ satisfying other dimension-specific criteria.
In dimension $n=1, 2$, a resonance occurs if $\psi \in L^p(\R^2)$ for some $p>2$, while
in dimensions $n=3,4$, the condition is instead
$(1+|x|)^{-\sigma} \psi \in L^2$ for certain values
of $\sigma>0$.  Working in dimension $n \geq 5$ ensures that $(-\Delta)^{-1}V$ is a bounded operator on $L^2(\R^n)$,
and forces zero to be an eigenvalue of $H$ if it is not a regular point of the spectrum.
In order to handle the low-energy contribution, one typically assumes that zero is
a regular point of the spectrum of $H$.  Our goal in this paper is to characterize the
evolution in odd dimensions $n \ge 5$ when that assumption fails.  The analogous case in even dimensions $n\geq 6$ is treated in a separate work~\cite{GGeven}.

It is known that in general obstructions at zero lead to a loss of time decay in the dispersive estimate.
Jensen and Kato~\cite{JenKat} showed that in three dimensions, if there is a resonance at zero energy then
the propagator $e^{itH}P_{ac}(H)$ (as an operator between polynomially weighted $L^2(\R^3)$ spaces) has leading order decay of $|t|^{-\f 12}$
instead of $|t|^{-\f 32}$.  The same effect occurs if zero is an eigenvalue, despite the fact that $P_{ac}(H)$
explicitly projects away from the associated eigenspace.
For all $n \geq 5$,
Jensen~\cite{Jen}
obtains leading order decay at the rate $|t|^{2-\frac{n}{2}}$ as an operator on weighted $L^2(\R^n)$ spaces if zero is an eigenvalue.  The
subsequent terms of the asymptotic expansion have decay rates $|t|^{1-\frac{n}{2}}$ and $|t|^{-\frac{n}{2}}$
and map between more heavily weighted $L^2(\R^n)$ spaces.  
We are able to recover the same structure
of time decay with respect to mappings from $L^1(\R^n)$ to $L^\infty(\R^n)$, with a finite rank leading order
term and a remainder that belongs to weighted spaces.  Our results imply Jensen's results by embedding
$(1+|x|)^{-\sigma}L^2(\R^n) \subset L^1(\R^n)$ and $L^\infty(\R^n) \subset (1+|x|)^{\sigma}L^2(\R^n)$ for $\sigma > \f n2$,
and in fact our results imply $L^2$ estimates with
smaller weights than required in \cite{Jen}.

Perhaps the most novel result we prove is 
that~\eqref{eq:dispersive} is once again satisfied, with no weights, provided the zero-energy eigenfunctions
satisfy the two orthogonality conditions stated in Theorem~\ref{thm:main}  part (\ref{thmpart3}).

The orthogonality conditions mentioned above are directly tied to the spatial decay of eigenfunctions solving $H\psi = 0$.
For a ``generic'' eigenfunction, $|\psi(x)| \sim |x|^{2-n}$ for large $x$, a property inherited from the Green's function of
the Laplacian.  Vanishing moments of $V\psi$ are associated with faster decay of $\psi(x)$, more specifically
the condition $\int_{\R^n}V\psi\,dx = 0$ in Theorem~\ref{thm:main}
implies that $|\psi(x)| \les |x|^{1-n}$, and if $\int_{\R^n} x_jV\psi\,dx = 0$ as well for each $j = 1\ldots n$
then in fact $|\psi(x)| \les |x|^{-n}$.
At the same time these conditions also cause some leading-order terms of the expansion in~\cite{Jen} to vanish.
We show that a more subtle cancellation takes place in the remainder terms, which is why they can be stated with
reduced weights as in Theorem~\ref{thm:main} part (\ref{thmpart2}), or no weights as in part (\ref{thmpart3})
of the same theorem.  
The first author proved a similar result in three dimensions under the condition $\psi \in L^1(\R^3)$, \cite{goldE} in place of the orthogonality conditions.  

We emphasize that the threshold spectral properties of a given
Schr\"odinger operator $H = -\Delta + V$ are difficult to discern from examing the potential $V$ alone.
Eigenvalues at zero can be ruled out if the potential has a lower bound
$V(x) \geq -C_0(1+|x|^2)^{-1}$, where the constant $C_0$ can be determined
from the corresponding Hardy inequality. 
However for potentials with large negative part there is no known simple test
to determine whether an eigenvalue is present at zero.

At the end of Section~\ref{sec:Spec} we describe a large family of 
compactly supported potentials
that possess a zero-energy eigenfunction with prescribed polynomial decay at infinity. 
We believe that the eigenspace is typically one-dimensional, in which case
the orthogonality conditions of Theorem~\ref{thm:main} are satisfied.
The construction and supporting arguments are adapted from~\cite{goldE}.

To state our main results,
first choose a smooth cut-off function $\chi(\lambda)$
with $\chi(\lambda)=1$ if $\lambda<\lambda_1/2$ and
$\chi(\lambda)=0$ if $\lambda>\lambda_1$, for a
sufficiently small $0<\lambda_1\ll 1$.
In addition, we use the notation $\la x\ra := (1+|x|)$, and define the
(polynomially)
weighted $L^p$ spaces
\begin{equation*}
\lnorm f\rnorm_{L^{p,\sigma}} := \lnorm \la x\ra^\sigma f\rnorm_p
\end{equation*}
and the abbreviations $a-:=a-\epsilon$ and $a+:=a+\epsilon$ for a small, but
fixed, $\epsilon>0$.

We prove the following low energy bounds.

\begin{theorem}\label{thm:reg}

	Assume that $n \geq 5$ is odd, 
	$|V(x)|\les \la x\ra ^{-\beta}$ for some $\beta>n$
	and that zero is  not an eigenvalue of
	$H=-\Delta+V$ on $\R^n$.  Then,
	$$
		\|e^{itH} \chi(H)P_{ac}(H)\|_{L^1\to L^\infty}
		\les |t|^{-\f n2}.
	$$
	
\end{theorem}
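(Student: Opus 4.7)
The plan is to apply Stone's formula to represent the evolution as a spectral integral,
\begin{align*}
e^{itH}\chi(H)P_{ac}(H)f = \f{1}{\pi i}\int_0^\infty e^{it\lambda^2}\lambda\, \chi(\lambda)\bigl[R_V^+(\lambda^2) - R_V^-(\lambda^2)\bigr]f\, d\lambda,
\end{align*}
and to invoke the symmetric resolvent identity $R_V^\pm = R_0^\pm - R_0^\pm v\, M^\pm(\lambda)^{-1} v R_0^\pm$, where $v = |V|^{1/2}$, $U = \mathrm{sign}(V)$, and $M^\pm(\lambda) = U + v R_0^\pm(\lambda^2) v$. The hypothesis that zero is not an eigenvalue (combined with the fact that threshold resonances are automatically absent for $n \ge 5$) ensures that $M^\pm(0)$ is invertible as an operator on $L^2(\R^n)$; a Neumann series argument then furnishes $M^\pm(\lambda)^{-1}$ with uniformly bounded norm together with sufficient differentiability in $\lambda$ on $[0,\lambda_1]$, provided one works in suitable weighted spaces so that the $\lambda$-derivatives of $v R_0^\pm v$ remain bounded.

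The second ingredient is the explicit odd-dimensional kernel of the free resolvent,
\begin{align*}
R_0^\pm(\lambda^2)(x,y) = \f{e^{\pm i\lambda|x-y|}}{|x-y|^{n-2}}\, P_n^\pm(\lambda|x-y|),
\end{align*}
where $P_n^\pm$ is a polynomial of degree $\f{n-3}{2}$. This closed form makes differentiation in $\lambda$ transparent: each $\partial_\lambda$ either differentiates the exponential, producing an additional factor of $|x-y|$, or reduces the polynomial degree. Substituting the resolvent expansion into Stone's formula and integrating by parts in $\lambda$ via $e^{it\lambda^2} = (2it\lambda)^{-1}\partial_\lambda e^{it\lambda^2}$, one obtains the prefactor $|t|^{-n/2}$ after roughly $\f{n+1}{2}$ iterations, with the remaining $\lambda$-integral over $[0,\lambda_1]$ converging absolutely. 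The boundary contribution at $\lambda = \lambda_1$ is harmless because of the smooth cutoff $\chi$, while the endpoint $\lambda = 0$ is handled by the factor $\lambda$ already present and by the cancellation between the $+$ and $-$ resolvents, which leaves an odd function of $\lambda$ to be extended to $\R$.

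The principal technical obstacle is controlling the terms in which the maximum number of $\partial_\lambda$'s land on a single factor of $R_0^\pm$. Each such derivative extracts a power of $|x-y|$ which, combined with the singular Green's function kernel $|x-y|^{-(n-2)}$, produces an integral kernel that grows in the spatial variables; controlling the resulting iterated integrals requires a careful distribution of the polynomial decay provided by $V$ among the $v$ factors in the composition $R_0^\pm v M^\pm(\lambda)^{-1} v R_0^\pm$. This weight bookkeeping is exactly why the decay assumption $\beta > n$ enters: it is enough to absorb the worst case powers of $\la x\ra$ and $\la y\ra$ that arise from the repeated differentiations and to show that the full kernel is uniformly bounded pointwise in $(x,y)$, which is the $L^1 \to L^\infty$ mapping bound sought.
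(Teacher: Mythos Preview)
Your outline follows the same overall strategy as the paper (Stone's formula, symmetric resolvent identity, Neumann series for $M^\pm(\lambda)^{-1}$ when zero is regular, integration by parts), but there is a genuine gap in the final step and a missing structural ingredient.

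First, the paper does not use the bare identity $R_V^\pm = R_0^\pm - R_0^\pm v\,M^\pm(\lambda)^{-1} v\,R_0^\pm$ directly at the kernel level. For $n\ge 5$ the function $vR_0^\pm(\lambda^2)(\cdot,x)$ is not in $L^2$, so the composition with $M^\pm(\lambda)^{-1}$ is not a priori meaningful pointwise. The paper first iterates the standard resolvent identity $m$ times on each side (with $m+1\ge \frac{n-3}{4}$) to reach
\[
R_V^\pm=\sum_{k=0}^{2m+1}(-1)^k(R_0^\pm V)^kR_0^\pm
+(R_0^\pm V)^m R_0^\pm v\,M^\pm(\lambda)^{-1} v\,R_0^\pm(VR_0^\pm)^m,
\]
and invokes Lemma~\ref{lem:iterated} to justify that the tail is now a well-defined kernel. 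The finite Born series terms are then handled separately (Proposition~\ref{bsprop}).

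Second, and more seriously, your claim that ``the decay assumption $\beta>n$ \dots\ is enough to absorb the worst case powers of $\la x\ra$ and $\la y\ra$'' is incorrect as stated. When all $\frac{n+1}{2}$ derivatives fall on the leading resolvent $R_0^\pm(\lambda^2)(x,z_1)$, the spatial growth that appears is in $|x-z_1|$, and while the $\la z_1\ra$ part can indeed be absorbed by $V(z_1)$, the $\la x\ra$ part cannot: there is no factor of $V$ adjacent to the external variable $x$. Naive integration by parts $\frac{n+1}{2}$ times therefore produces only a weighted $L^{1}\to L^{\infty,-1}$ bound, not the unweighted one. The paper's resolution (see Lemma~\ref{lem:BS} and Lemma~\ref{lem:statphase}) is to integrate by parts only $\frac{n-1}{2}$ times, use the dimension-reduction identity to reduce the leading factor to $e^{\pm i\lambda|x-z_1|}$, and then extract the final $|t|^{-1/2}$ by a stationary-phase argument in which the phase is shifted to $\lambda_0=-|x|/2t$; the discrepancy $|x-z_1|-|x|$ is bounded by $\la z_1\ra$, which \emph{can} be absorbed by $V$. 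This phase-shift trick, not weight bookkeeping, is what removes the dependence on $x$ and $y$.
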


\begin{theorem}\label{thm:main}

	Assume that $n \geq 5$ is odd, $|V(x)|\les \la x\ra ^{-\beta}$, and that zero is an eigenvalue of
	$H=-\Delta+V$ on $\R^n$.  The low energy Schr\"odinger propagator $e^{itH}\chi(H)P_{ac}(H)$
	possesses the following structure:
	\begin{enumerate}
	\item \label{thmpart1} Suppose that
	there exists $\psi \in {\rm Null}\,H$ such that 
	$\int_{\R^n} V\psi\,dx \not= 0$.  Then there is a rank one time dependent operator $\|F_t\|_{L^1 \to L^\infty}\les
    |t|^{2-\f n2}$ such that for $|t| > 1,$
	$$
	e^{itH} \chi(H)P_{ac}(H) - 
	F_t= \mathcal E_1(t).
	$$ 
	Where  $\|\mathcal E_1 \|_{L^1 \to L^\infty}=o(|t|^{2-\f n2})$ if $\beta>n$
	and  $\|\mathcal E_1 \|_{L^1 \to L^\infty}=O(|t|^{1-\f n2})$
	if $\beta>n+4$.
	\item \label{thmpart2} Suppose 
	that $\int_{\R^n} V\psi\, dx = 0$ for each $\psi \in {\rm Null}\,H$ but
           $\int_{\R^n} x_j V \psi\, dx \not= 0$ for some $\psi$ and some $j \in [1, \ldots, n]$.
	 Then there exists a finite rank time dependent operator $G_t$ satisfying
	$\lnorm G_t\rnorm_{L^1 \to L^\infty} \les |t|^{1-\frac{n}{2}}$ such that for $|t|>1$,
	$$
	e^{itH} \chi(H)P_{ac}(H) - G_t = \mathcal E_2(t).
	$$
	Where  $\|\mathcal E_2 \|_{L^1 \to L^\infty}=O(|t|^{1-\f n2})$ and
	$ \|\mathcal E_2 \|_{ L^{1,0+} \to L^{\infty,0-}}=o(|t|^{1-\f n2})$
	 if $\beta>n+4$
	and  $\|\mathcal E_2 \|_{L^{1,1} \to L^{\infty,-1}}=O(|t|^{-\f n2})$
	if $\beta>n+8$.
	\item \label{thmpart3} Suppose $\beta>n+8$ and
	that $\int_{R^n} V\psi\, dx = 0$ and $\int_{R^n} x_j V \psi\, dx = 0$
	for all $\psi \in {\rm Null}\,H$ and all $j \in [1,\ldots,n]$.  Then
	$$
	\lnorm e^{itH} \chi(H)P_{ac}(H)\rnorm_{L^1\to L^\infty} \les |t|^{-\frac{n}{2}}.
	$$
	\end{enumerate}
	
\end{theorem}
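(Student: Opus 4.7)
The plan is to base everything on Stone's formula for the low-energy part of the propagator,
\begin{equation*}
e^{itH}\chi(H) P_{ac}(H) = \frac{1}{\pi i}\int_0^\infty e^{it\lambda^2}\lambda\,\chi(\lambda^2)\bigl[R_V^+(\lambda^2) - R_V^-(\lambda^2)\bigr]\, d\lambda,
\end{equation*}
combined with the symmetric resolvent identity $R_V^\pm(\lambda^2) = R_0^\pm(\lambda^2) - R_0^\pm(\lambda^2)\,v\,M^\pm(\lambda)^{-1}\,v\,R_0^\pm(\lambda^2)$, where $v=|V|^{1/2}$, $V=Uv\cdot v$ with $U=\mathrm{sgn}(V)$, and $M^\pm(\lambda)=U+vR_0^\pm(\lambda^2)v$. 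The pure $R_0$ contribution is handled by the free dispersive bound, and Theorem~\ref{thm:reg} is the model for bounding the remaining terms when $M^{-1}$ is regular at $\lambda=0$. Here all of the singular behavior of $M^\pm(\lambda)^{-1}$ as $\lambda\to 0$ must be extracted explicitly.

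The second step is a Jensen--Nenciu type Laurent expansion of $M^\pm(\lambda)^{-1}$. Because $n\geq 5$ is odd, the free resolvent kernel expands in integer powers of $\lambda$ with real-analytic remainder, which by an iterated Feshbach/Schur inversion produces an expansion of the form
\begin{equation*}
M^\pm(\lambda)^{-1} = \lambda^{-2}\mathcal B_{-2} + \lambda^{-1}\mathcal B_{-1}^\pm + \mathcal B_0 + \lambda\,\mathcal B_1^\pm + \cdots,
\end{equation*}
in which $\mathcal B_{-2}$ is a finite-rank operator with range in $v(\mathrm{Null}\,H)$, proportional to $\sum_j |v\psi_j\rangle\langle v\psi_j|$, and $\mathcal B_{-1}^\pm$ incorporates $|vx_j\psi\rangle\langle v\psi|$-type terms. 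A direct inspection of the inversion formula identifies $\mathcal B_{-2}$ as the obstruction to $\int V\psi\,dx=0$ and $\mathcal B_{-1}^\pm$ as the obstruction to $\int x_j V\psi\,dx=0$, so these coefficients vanish identically when the orthogonality hypotheses in parts~(\ref{thmpart2}) and~(\ref{thmpart3}) hold.

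The third step substitutes the expansion into Stone's formula and bundles terms by order in $\lambda$. The $\lambda^{-2}$ piece, multiplied by $\lambda\chi(\lambda^2)e^{it\lambda^2}$ and integrated, gives a rank one operator $F_t$; using the leading $|x|^{2-n}$ Green's function behavior on either side of $\mathcal B_{-2}$ and the oscillatory estimate
\begin{equation*}
\Bigl|\int_0^\infty e^{it\lambda^2}\lambda^{a}\chi(\lambda^2)g(\lambda)\, d\lambda\Bigr|\lesssim |t|^{-(a+1)/2},
\end{equation*}
one obtains $\lnorm F_t\rnorm_{L^1\to L^\infty}\lesssim |t|^{2-n/2}$, proving part~(\ref{thmpart1}). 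The $\lambda^{-1}$ piece similarly produces the finite-rank $G_t$ of part~(\ref{thmpart2}) with decay $|t|^{1-n/2}$. The remainder estimates follow by repeated integration by parts against $e^{it\lambda^2}$, with the number of permissible derivatives controlled by the decay exponent $\beta$ on the potential; stronger $\beta$ enables one more order of cancellation and accounts for the $o$-versus-$O$ and weighted-versus-unweighted dichotomies stated in the theorem.

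The main obstacle is part~(\ref{thmpart3}), where one must show that the $\mathcal B_0$-term gives a true $L^1\to L^\infty$ bound with no weights once both orthogonality conditions are imposed. The naive expansion of $R_0(\lambda^2)v\,\mathcal B_0\,v R_0(\lambda^2)$ produces intermediate kernels whose $|x-y|^k$ factors degrade to polynomial growth in $x,y$ after Taylor-expanding the free resolvent in $\lambda$; a direct estimate would only give weighted $L^1\to L^\infty$ bounds. The mechanism that saves the unweighted bound is that under both orthogonality conditions the eigenfunctions obey $|\psi(x)|\lesssim |x|^{-n}$, which translates into extra vanishing moments of $v\psi$ and precisely annihilates those growing kernels within the projection structure of $\mathcal B_0$. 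I would track this cancellation term by term through the resolvent expansion, verifying that every surviving contribution either decays enough in $\lambda$ to yield $|t|^{-n/2}$ after the oscillatory integral, or has a kernel bounded uniformly in $x,y$. Assembling these pieces, together with the free dispersive estimate for the pure $R_0$ contribution, delivers the unweighted $|t|^{-n/2}$ bound in part~(\ref{thmpart3}).
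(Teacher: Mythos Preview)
Your expansion of $M^\pm(\lambda)^{-1}$ is structurally wrong, and this misidentifies where the obstructions live. The actual expansion (Proposition~\ref{prop:Minvfull}) is
\[
M^\pm(\lambda)^{-1}=-\frac{D_1}{\lambda^2}+\sum_{j=0}^{\frac{n-7}{2}}\lambda^{2j}M_{2j}\pm i\lambda^{n-6}D_1vG_{n-2}vD_1+\lambda^{n-5}M_{n-5}\pm i\lambda^{n-4}M_{n-4}+\cdots.
\]
There is no $\lambda^{-1}$ term except in the special case $n=5$, and the $\lambda^{-2}$ coefficient $D_1=wP_ew$ \emph{never} vanishes when zero is an eigenvalue---it is not the obstruction to $\int V\psi=0$. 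The slow time decay arises because only \emph{odd} powers of $\lambda$ survive the $+/-$ cancellation in \eqref{Stone}. The first odd power, $\lambda^{n-6}$, has coefficient $D_1vG_{n-2}vD_1=c\,wP_eV1VP_ew$; \emph{this} vanishes when $P_eV1=0$. The next odd power is $\lambda^{n-4}$, whose coefficient $B_n$ involves $D_1vG_nvD_1$ and vanishes when additionally $P_eVx=0$. Thus the rank-one operator $F_t$ in part~(\ref{thmpart1}) comes from the $\lambda^{n-6}$ term (not from $\lambda^{-2}$), and $G_t$ in part~(\ref{thmpart2}) comes from $\lambda^{n-4}$.

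Your mechanism for part~(\ref{thmpart3}) is also off. Even after both orthogonality conditions kill the $\lambda^{n-6}$ and $\lambda^{n-4}$ coefficients in $M^+-M^-$, the $+/-$ difference can land on the \emph{outer} free resolvents, producing terms like $\lambda^{-2}[R_0^+-R_0^-](x,z_1)V(z_1)P_e(z_1,y)$. These are not handled by eigenfunction decay cancelling growing kernels; the paper instead writes $R_0^+-R_0^-=2i\lambda^{n-2}G_{n-2}+\lambda^{n-2}K_\pm(\lambda|x-z_1|)$, uses the orthogonality to subtract $K_+(\lambda|x|)$ and $\nabla K_+(\lambda|x|)\cdot z_1$ (legitimate precisely because $1VP_e=0$ and $x_jVP_e=0$), and then applies a stationary-phase argument with the phase recentered at $\lambda_0=-|x|/2t$ (Lemmas~\ref{lem:Tlow}, \ref{lem:Thigh}). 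That last step is the genuine analytic content of part~(\ref{thmpart3}), and it does not appear in your outline. Finally, in dimensions $n\geq 5$ the bare identity $R_V=R_0-R_0vM^{-1}vR_0$ is not directly usable; one must first iterate the Born series $m$ times on each side (as in \eqref{eq:finitebs}--\eqref{eq:bstail}) to reach a local $L^2$ setting before inserting $M^{-1}$.
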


We note that the assumption that
$\int_{\R^n} V\psi\, dx = 0$ for each $\psi \in {\rm Null}\,H$ is equivalent to assuming that the operator
$P_eV1=0$ with $P_e$ the projection onto the zero-energy
eigenspace.  Further, $\int_{R^n} x_j V \psi\, dx = 0$
for each $j=1,2,\dots, n$
is equivalent to assuming the operator $P_eVx=0$.

These results are fashioned similarly to the asymptotic expansions in~\cite{Jen}, with particular
emphasis on the behavior of the resolvent of $H$ at low energy.  If one assumes greater decay
of the potential, then it becomes possible to carry out the resolvent expansion to a greater number of
terms, which permits a more detailed description of the time decay of $e^{itH}\chi(H)P_{ac}(H)$.
We note that while $F_t$ and $G_t$ above have a concise construction, expressions for higher order
terms in the expansion are unwieldy enough to discourage writing out an exact formula.

The power series statement of the main theorem is as follows. 
\begin{corollary}\label{cor:ugly}

If $|V(x)|\les \la x\ra^{-n-8-}$, and there is an
eigenvalue of $H$ at zero energy, then 
we have the operator-valued expansion
\begin{align} \label{eq:corollary}
	e^{itH} \chi(H)P_{ac}(H)=C_n|t|^{2-\f n2}P_eV1VP_e +
	|t|^{1-\f n2} A_{-1} + |t|^{-\f n2}A_0(t).
\end{align}
There exist uniform bounds for 
$P_eV1VP_e:L^1\to L^\infty$, 
$A_{-1}:L^{1}\to L^{\infty}$, and $A_0(t):L^{1,2}\to L^{\infty, -2}$.  
The operator $P_eV1VP_e$ is a rank one projection
and $A_{-1}$ is finite rank.
Furthermore, if $P_eV1=0$, then \
$A_0(t):L^{1,1}\to L^{\infty, -1}$. If $P_eV1=0$ and $P_eVx=0$
then $A_{-1}$ vanishes and 
$A_0(t):L^1\to L^\infty$ uniformly in $t$.  

\end{corollary}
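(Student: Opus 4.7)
The plan is to read Corollary \ref{cor:ugly} directly off of Theorem \ref{thm:main}, together with a more careful look at the resolvent expansion that underlies that theorem; the corollary is essentially a repackaging of the three cases into a single three-term asymptotic formula.

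The first step is to identify the rank-one operator $F_t$ of Theorem \ref{thm:main}(1) as $C_n|t|^{2-\f n2}P_eV1VP_e$ for an explicit dimensional constant $C_n$. Starting from the Stone formula
\[
e^{itH}\chi(H)P_{ac}(H) = \f{1}{\pi i}\int_0^\infty e^{it\lambda^2}\chi(\lambda^2)\lambda\,\bigl[R_V(\lambda^2+i0) - R_V(\lambda^2-i0)\bigr]\, d\lambda,
\]
one expands $R_V(\lambda^2\pm i0)$ near $\lambda = 0$ via the symmetric resolvent identity, producing a leading singularity of the form $\lambda^{-2} P_eV1VP_e$, where ``$1$'' encodes the free-resolvent kernel at threshold evaluated against the constant function. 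Integrating $e^{it\lambda^2}\chi(\lambda^2)\lambda\cdot\lambda^{-2}$ over $[0,\infty)$ gives $C_n|t|^{2-\f n2}$ by scaling, which identifies the first term of \eqref{eq:corollary}. The rank-one property is immediate: $V1V$ factors as $f\mapsto V\int Vf$, flanked on both sides by the finite-rank projection $P_e$, so $P_eV1VP_e$ acts as $f\mapsto \la P_eV,f\ra P_eV$, which is rank one and vanishes exactly when $P_eV1=0$.

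Next I would identify $A_{-1}$ as the coefficient of the $\lambda^{-1}$ pole in the same resolvent expansion pushed one term further. This pole is built from operators of the form $P_eVQVP_e$ with $Q$ arising from the next order of the free-resolvent expansion, which contributes moments $\int x_jV\psi\,dx$ paired against the eigenspace; the resulting operator is therefore finite rank, bounded $L^1\to L^\infty$ by the same factorization argument used for $F_t$, and vanishes whenever $P_eVx = 0$. The remainder $|t|^{-\f n2}A_0(t)$ is then defined as what is left after subtracting the two leading terms, and its weighted bounds are read off directly from Theorem \ref{thm:main}: in the general case ($P_eV1\neq 0$) the error $\mathcal E_1$ minus the finite-rank $|t|^{1-\f n2}A_{-1}$ is $O(|t|^{-\f n2})$ from $L^{1,2}\to L^{\infty,-2}$ under $\beta>n+8$; when $P_eV1=0$ the leading term disappears and Theorem \ref{thm:main}(2) gives $\mathcal E_2$ of size $|t|^{-\f n2}$ in $L^{1,1}\to L^{\infty,-1}$; when both orthogonality conditions hold, $A_{-1}$ vanishes as well and Theorem \ref{thm:main}(3) supplies the unweighted $L^1\to L^\infty$ estimate.

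The main obstacle is bookkeeping: one must verify that subtracting the finite rank operators $F_t$ and $|t|^{1-\f n2}A_{-1}$, which are both bounded $L^1\to L^\infty$ uniformly in $t$ after their time factors are removed, preserves rather than degrades the weighted remainder estimates across the three regimes. Since any bounded $L^1\to L^\infty$ operator maps $L^{1,\sigma}\to L^{\infty,-\sigma}$ for every $\sigma\ge 0$, this transfer is automatic, and the three cases of Theorem \ref{thm:main} assemble cleanly into the single expansion \eqref{eq:corollary} without any new analytic input.
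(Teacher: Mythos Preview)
Your overall strategy of reading Corollary~\ref{cor:ugly} as a repackaging of Theorem~\ref{thm:main} is the paper's own, but two concrete errors need correcting.

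First, your identification of the spectral origin of the leading terms is wrong. The $\lambda^{-2}$ pole in $R_V^\pm(\lambda^2)$ is \emph{real-valued} and cancels in the difference $R_V^+-R_V^-$; there is no ``$\lambda^{-1}$ pole'' at all in odd dimensions. What survives the $+/-$ cancellation in \eqref{eq:nastydiff} are the \emph{odd} powers $\lambda^{n-6},\lambda^{n-4},\ldots$ coming from the imaginary parts of the resolvent expansion (Proposition~\ref{prop:Minvfull} and Lemma~\ref{lem:R0exp}). The leading term is $c_{n-2}\lambda^{n-6}P_eV1VP_e$ (Lemma~\ref{lem:nocanc}); integrating $\lambda\cdot\lambda^{n-6}$ against $e^{it\lambda^2}$ via Lemma~\ref{lem:IBP} gives the $|t|^{2-\frac n2}$ rate. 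Your scaling computation ``$\int e^{it\lambda^2}\lambda\cdot\lambda^{-2}\,d\lambda\sim|t|^{2-\frac n2}$'' is dimensionally incorrect---that integral is scale-invariant. Similarly, $A_{-1}$ collects the $\lambda^{n-4}$ contributions from both the $M^\pm(\lambda)^{-1}$ expansion and from the inner/outer resolvents paired with the $\lambda^{-2}D_1$ singularity; it is not a single moment term, which is why vanishing requires \emph{both} $P_eV1=0$ and $P_eVx=0$.

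Second, the weighted bound $A_0(t):L^{1,2}\to L^{\infty,-2}$ in the generic case does not follow from Theorem~\ref{thm:main}(\ref{thmpart1}), which only asserts $\mathcal E_1=O(|t|^{1-\frac n2})$ unweighted. To extract a further order of time decay one must return to the resolvent analysis: the terms $\lambda^{-2}E_1^\pm(\lambda)VP_e$ carry a factor $|x-z_1|^2$ (Lemma~\ref{lem:R0exp}), and it is the detailed treatment in Lemmas~\ref{lem:Tlow} and~\ref{lem:Thigh} that converts this into the stated weights. This is genuine analytic content, not bookkeeping, and is precisely why the paper points to Proposition~\ref{prop:Minvfull} (the longer expansion requiring $\beta>n+8$) as the tool for proving the corollary.
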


We note that this expansion could continue indefinitely
in powers of $|t|^{-\f n2 -k}$, $k\in \mathbb N$.  The
operators would be finite rank 
between successively more heavily
weighted spaces and  would require more decay on the
potential $V$, we do not pursue this issue.

Global dispersive estimates are known in all lower dimensions when zero is
not a regular point of the spectrum.  They are due to the first author and Schlag~\cite{GS}
in one dimension, the second author and Erdo\smash{\u{g}}an~\cite{EG} in two dimensions,
Yajima~\cite{Yaj3} and Erdo\smash{\u{g}}an-Schlag~\cite{ES2,ES} in three dimensions, and to Erdo\smash{\u{g}}an
and the authors~\cite{EGG} in four dimensions.  Except for the last of these, the
low-energy argument builds upon the series expansion for resolvents set forth
in~\cite{JenKat, Jen2, JN}.  We continue to follow this line of argument and work
with high-dimensional resolvent expansions similar to
those in~\cite{Jen}. 

We note that the estimates we prove can be combined with
the large energy estimates in, for example, \cite{Yaj,FY} to 
prove analogous statements for the full evolution
$e^{itH}P_{ac}(H)$ without the low-energy cut-off.
This requires more assumptions on the
the potential, that its polynomially weighted Fourier transform satisfies
$$
	\mathcal F(\la x\ra^{2\sigma}V)\in L^{n_*}(\R^n)
	\qquad \textrm{ for } \sigma>\frac{1}{n_*}=\frac{n-2}{n-1}.
$$
Roughly speaking, this corresponds to having more than
$\frac{n-3}{2}+\frac{n-3}{n-2}$ derivatives of $V$
in $L^2$.

In addition there has been work on the $L^p$ boundedness
of the wave operators, which are defined by strong limits
on $L^2(\R^n)$,
$$
	W_{\pm}=s\mbox{-}\lim_{t\to \pm\infty}
	e^{itH}e^{it\Delta}.
$$
The $L^p$ boundedness of the wave operators is of
interest to our line of inquiry because of the  
`intertwining property'
$$
	f(H)P_{ac}=W_{\pm}f(-\Delta)W_{\pm}^*,
$$
which is valid for Borel functions $f$. 
In dimensions $n\geq 5$, boundedness of the 
wave operators on $L^p$ for $\frac{n}{n-2}<p<\frac{n}{2}$
in the presence of an eigenvalue at zero
was established by Yajima~\cite{Yaj} in odd dimensions,
and Finco-Yajima \cite{FY} in even dimensions.
In particular, these results imply the mapping estimate
$$
	\|e^{itH}P_{ac}(H)\|_{L^p\to L^{p\prime}}\les
	|t|^{-\f n2+\frac{n}{p}}.
$$
Here $p^\prime$ is the conjugate exponent satisfying
$\frac{1}{p}+\frac{1}{p^\prime}=1$.  Roughly speaking,
the wave operator results yield a time decay rate of
$|t|^{-\frac{n}{2}+2+}$.  Similar results in lower dimensions can be found in \cite{Yaj3,JY4}.

The fact that $n \geq 5$ allows for greater uniformity of approach, as there are no
special dimension-specific considerations related to the distinction between resonances
and eigenvalues at zero.  There are, however, significant differences in the low-energy
expansion of the resolvent depending on whether $n$ is even or odd.  While
the dispersive bounds stated in Theorem~\ref{thm:main} hold for all $n \geq 5$,
there are not enough shared elements in the computation to treat the even and odd
dimensional cases side by side.   The present paper considers odd $n$.  The case of even $n$ is 
more technically challenging due to the logarithmic
behavior of the resolvent operators and is
considered
in the companion paper, \cite{GGeven}.

We define the limiting resolvent operators
\begin{align*}
	R_V^{\pm}(\lambda^2)=\lim_{\epsilon \to 0^+}
	(-\Delta+V-(\lambda^2\pm i \epsilon))^{-1}.
\end{align*}
These operators are well-defined on certain weighted
$L^2(\R^n)$ spaces, see \cite{agmon}.  In fact, 
there is a zero energy eigenvalue precisely when this
operator becomes unbounded as $\lambda \to 0$.

As usual (cf. \cite{RodSch,GS,Sc2}), the dispersive estimates follow by considering the operator $e^{itH}\chi(H)P_{ac}(H)$
as an element of the functional calculus of $H$.  Using the Stone formula, we have
\begin{align*}
	e^{itH}\chi(H)P_{ac}(H)f(x)=\frac{1}{2\pi i} 
	\int_0^\infty e^{it\lambda^2} \lambda \chi(\lambda)
	[R_V^+(\lambda^2)-R_V^-(\lambda^2)]f(x)\, 
	d\lambda,
\end{align*}
with the difference of resolvents $R_V^{\pm}(\lambda)$ providing the absolutely continuous spectral measure.
For $\lambda > 0$ (and if also at $\lambda = 0$ if zero is a regular point of the spectrum) the resolvents are well-defined on certain
weighted $L^2$ spaces.  The key issue when zero energy is not regular is to control
the singularities in the spectral measure  as $\lambda\to 0$.

Here $R_V^\pm(\lambda^2)$ are operators whose integral
kernel we write as $R_V^\pm(\lambda^2)(x,y)$.  That is,
the action of the operator is defined by
\begin{align*}
	R_V^\pm(\lambda^2)f(x)= \int_{\R^n}R_V^\pm(\lambda^2)(x,y) f(y) \, dy.
\end{align*}

The analysis in this paper focuses on bounding the
oscillatory integral
\begin{align}\label{Stone}
	\int_0^\infty e^{it\lambda^2} \lambda \chi(\lambda)
	[R_V^+(\lambda^2)-R_V^-(\lambda^2)](x,y)\, 
	d\lambda
\end{align}
in terms of $x,y$ and $t$.  A uniform bound of the form
$\sup_{x,y} |\eqref{Stone}|\les |t|^{-\alpha}$ would give
us an estimate on $e^{itH}P_{ac}(H)$ as an operator from
$L^1\to L^\infty$.   We leave open the option of dependence
on $x$ and $y$ to allow for estimates between weighted
$L^1$ and weighted $L^\infty$ spaces.
That is, an estimate of
the form $|\eqref{Stone}|\les |t|^{-\alpha}\la x\ra^{\sigma} \la y\ra^{\sigma'}$ yields an estimate
as an operator from $L^{1,\sigma'}\to L^{\infty, -\sigma}$.

Accordingly, we study expansions for the resolvent operators $R_V^{\pm}(\lambda^2)$ in a neighborhood
of zero.  The type of terms present is heavily influenced by whether $n$ is even or odd.
In odd dimensions the expansion is a formal Laurent series 
$$
	R_V^\pm(\lambda^2) = A\lambda^{-2} 
	+ B\lambda^{-1} + O(1)
$$ with operator-valued coefficients.  The
operators $A$ and $B$ are zero if there are no
zero-energy eigenvalues (or resonances in dimensions
$n=1,3$).
In even
dimensions the expansions are more complicated, involving terms of the
form $\lambda^k (\log \lambda)^\ell$, $k \geq -2$, see
for example \cite{Jen,Jen2,EG,EGG,GGeven}.

The organization of the paper is as follows.
We begin in Section~\ref{sec:resolv} by developing expansions for the free
resolvent and develop necessary machinery to
understand the spectral measure
$E'(\lambda)=\frac{1}{2\pi i}
[R_V^+(\lambda^2)-R_V^-(\lambda^2)]$.  In Section~\ref{sec:disp}
we prove dispersive estimates for the tail of the 
Born series, \eqref{eq:bstail}, which is the portion of the
evolution that is sensitive to the existence of zero-energy eigenvalues.  Next, in Section~\ref{sec:finitebs},
we prove dispersive estimates for the finite Born
series series, \eqref{eq:finitebs}, which is the
portion of the low energy evolution
that is unaffected by zero-energy eigenvalues.
Collectively these form a proof of  Theorem~\ref{thm:main}.
In Section~\ref{sec:Spec} we provide a characterization
of the spectral subspaces of $L^2$ related to the
zero energy eigenspace.  Finally, Section~\ref{sec:ests}
contains an index of technical integral estimates that arise in the course
of the preceding calculations.

\section{Resolvent Expansions Around Zero}\label{sec:resolv}

In this section we first develop expansions for the integral kernels of the free
resolvents $R_0^{\pm}(\lambda^2):=\lim_{\epsilon\to 0^+}
(-\Delta-(\lambda^2\pm i\epsilon))^{-1}$ to understand the pertubed resolvent operators
$R_V^{\pm}(\lambda^2):=\lim_{\epsilon\to 0^+}
(-\Delta+V-(\lambda^2\pm i\epsilon))^{-1}$
with the aim of understanding the spectral measure in
\eqref{Stone}.

In developing these expansions we employ the following
notation
$$
	f(\lambda)=\widetilde O(g(\lambda))
$$
to indicate that
$$
	\frac{d^j}{d\lambda^j}f(\lambda)=O\bigg(\frac{d^j}{d\lambda^j}g(\lambda)\bigg).
$$
If the relationship holds only for the first $k$ 
derivatives, we use the notation 
$f(\lambda)=\widetilde{O}_k(g(\lambda))$.  With a slight
abuse of notation, we may write $f(\lambda)=\widetilde O(\lambda^k)$ for an integer $k$, to indicate that $\frac{d^j}{d\lambda^j}f(\lambda)=O(\lambda^{k-j})$.
This distinction is particularly important for when
$k\geq 0$ and $j>k$.

Writing the free resolvent kernel in terms of the Hankel functions we have
\begin{align}\label{Hankel}
	R_0(z)(x,y)=\frac{i}{4} \bigg(\frac{z^{1/2}}{2\pi |x-y|}\bigg)^{\frac{n}{2}-1} H_{\frac{n}{2}-1}^{(1)}(z^{1/2}|x-y|).
\end{align}
Here $H_{\frac{n}{2}-1}^{(1)}(\cdot)$ is the Hankel function of the first kind.   Since $n$ is odd, these are
Hankel functions of half-integer order, which can be expressed in closed form.
We use the following explicit representation for the kernel of the limiting resolvent operators $R_0^\pm(\lambda^2)$ 
(see, {\it e.g.}, \cite{Jen})  
$$R_0^\pm(\lambda^2)(x,y)=\mathcal G_n(\pm\lambda,|x-y|),$$ 
where
\begin{align}\label{Gn poly}
   \mathcal G_n(\lambda,r)=C_n \frac{e^{i\lambda r}}{r^{n-2}}
    \sum_{\ell=0}^{\frac{n-3}{2}} \frac{(n-3-\ell)!}
     {\ell!(\frac{n-3}{2}-\ell)!}
   (-2i r \lambda)^\ell.   
\end{align} 
For small $\lambda$, one can expand these in a 
Taylor series, as in Lemma~3.5 of \cite{Jen} to see with $G_j(x,y)=c_j |x-y|^{2+j-n}$ with $c_j$ real-valued 
constants.
\begin{align}\label{Taylor sloppy}
	\mathcal G_n(\lambda,r)=&G_0+\sum_{j=1}^{\frac{n-3}{2}}
	\lambda^{2j}G_{2j}+i\lambda^{n-2} G_{n-2}+\lambda^{n-1}G_{n-1}
	+i\lambda^n G_n
	+ E(\lambda),
	\textrm{ as } \lambda \to 0.
\end{align}
Where the error term $E(\lambda)=O(\lambda^{n+1})$ is
understood as a Hilbert-Schmidt operator between
weighted $L^2$ spaces.  We can (and need to) be more
delicate with this error term.

It is quite natural to view
this as an operator between weighted $L^2$ spaces as
$G_0$ is a scalar multiple of the fractional integral
operator $I_2$ whereas the remaining terms are either
scalar multiples of the fractional integral operators
$I_{2j+2}$ or can be
bounded in a Hilbert-Schmidt norm with sufficiently
large polynomial weights.  In particular,
we note that
\begin{align}
	G_0(x,y)&=c_0 |x-y|^{2-n}=(-\Delta)^{-1}(x,y),\label{G0 defn}\\
	G_{n-2}(x,y)&=c_{n-2},  \label {Gn-2 defn}\\	
	G_{n}(x,y)&=c_{n}|x-y|^2=c_{n}(x-y)\cdot(x-y)
	=c_n[|x|^2-2x\cdot y+|y|^2], \label {Gn defn}
\end{align}
We may also use the notation $G_{n-2}=c_{n-2} 1$, where
$1$ indicates the operator with kernel
$1(x,y)=1$.

\begin{lemma}\label{lem:R0exp}

	For $\lambda\leq \lambda_1$,
	we have the expansion(s) for the free resolvent,
	\begin{align*}
		R_0^{\pm}(\lambda^2)&=G_0
		+\sum_{j=1}^{\frac{n-3}{2}}
		\lambda^{2j}G_{2j}\pm i\lambda^{n-2} G_{n-2}
		+E_0^{\pm}(\lambda)\\
		&=G_0+\sum_{j=1}^{\frac{n-3}{2}}
		\lambda^{2j}G_{2j}
		\pm i\lambda^{n-2} G_{n-2}+\lambda^{n-1}G_{n-1}
		+E_1^\pm(\lambda)\\
		&=G_0+\sum_{j=1}^{\frac{n-3}{2}}
		\lambda^{2j}G_{2j}
		\pm i\lambda^{n-2} G_{n-2}+\lambda^{n-1}G_{n-1}
		 \pm i\lambda^n G_n 
		+E^\pm_2(\lambda) \\
		&=G_0+\sum_{j=1}^{\frac{n-3}{2}}
		\lambda^{2j}G_{2j}
		\pm i\lambda^{n-2} G_{n-2}+\lambda^{n-1}G_{n-1}
		 \pm i\lambda^n G_n \begin{aligned}[t]&+ \lambda^{n+1}G_{n+1} \\
		&\pm i\lambda^{n+2}G_{n+2}  
		+E^\pm_3(\lambda). \end{aligned}
	\end{align*}
	Where, for any $0\leq \ell\leq 1$,
	\begin{align*}
		E_0^{\pm}(\lambda)&=|x-y|^\ell
		\widetilde O_{\frac{n-1}{2}}
		(\lambda^{n-2+\ell}), \\
		E_1^{\pm}(\lambda) 
		&=|x-y|^{1+\ell}\widetilde O_{\frac{n+1}{2}}(\lambda^{n-1+\ell}), \\
		\quad
		E_2^{\pm}(\lambda)&=|x-y|^{2+\ell} \widetilde O_{\frac{n+3}{2}}(\lambda^{n+\ell}),\\
		E^\pm_3(\lambda) &= |x-y|^{4+\ell}\widetilde O_{\frac{n+7}{2}}(\lambda^{n+2+\ell}).
	\end{align*}

\end{lemma}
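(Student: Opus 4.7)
The plan is to derive the expansions by applying Taylor's theorem to the explicit formula~\eqref{Gn poly} for $\mathcal{G}_n(\lambda,r)$, extending the treatment of~\cite[Lemma 3.5]{Jen} to obtain the sharper error bounds and derivative control required here.  I would write $\mathcal G_n(\lambda,r) = C_n r^{2-n} e^{i\lambda r} P(\lambda r)$, where $P$ is the polynomial of degree $(n-3)/2$ read off from~\eqref{Gn poly}, and substitute the Taylor series of $e^{i\lambda r}$.  Collecting the coefficient of $\lambda^m$ produces a constant multiple of $r^{m+2-n}$, matching $G_m(x,y)=c_m|x-y|^{m+2-n}$.  A combinatorial identity on the coefficients of $P$, already verified in~\cite[Lemma 3.5]{Jen}, forces the $\lambda^m$ coefficient to vanish for odd $m$ with $m<n-2$; substituting $\pm\lambda$ for $\lambda$ then produces the $\pm i$ signs on the $\lambda^{n-2}$, $\lambda^n$, $\lambda^{n+2}$ terms of the statement.

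The error bounds come from a two-region analysis.  In the region $\lambda|x-y|\les 1$ I would use the integral form of the Taylor remainder of $e^{i\lambda r}$,
$$
R_N(\lambda r) := e^{i\lambda r} - \sum_{k=0}^{N}\frac{(i\lambda r)^k}{k!} = \frac{(i\lambda r)^{N+1}}{N!}\int_0^1 (1-s)^N e^{is\lambda r}\,ds,
$$
so that $|R_N(\lambda r)|\le |\lambda r|^{N+1}/(N+1)!$.  Choosing $N=n-2,\,n-1,\,n,\,n+2$ for $E_0,E_1,E_2,E_3$ respectively, multiplication by $P(\lambda r)$ and $r^{2-n}$ yields a bound $\les \lambda^{N+1}|x-y|^{N+3-n}$, which is exactly the $\ell=1$ endpoint.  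In the complementary region $\lambda|x-y|\gtrsim 1$, the direct estimate $|\mathcal G_n(\lambda,r)|\les \lambda^{(n-3)/2}|x-y|^{(1-n)/2}$ together with termwise bounds $\lambda^m|x-y|^{m+2-n}$ on the truncated polynomial are each controlled by the same $\lambda^{N+1}|x-y|^{N+3-n}$, since the ratios reduce to non-positive powers of $\lambda|x-y|$.  The $\ell=0$ endpoint follows in the first region by absorbing one factor of $|x-y|$ into $\lambda^{-1}$, and in the second by the analogous termwise check.  Geometric interpolation between these two endpoints delivers the intermediate $|x-y|^\ell\,\widetilde O(\lambda^{n-2+\ell})$ form (and its analogues for the other $E_k^\pm$) for every $0\le \ell\le 1$.

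For the derivative estimates recorded in $\widetilde O_k$, I would differentiate the integral remainder using $\partial_z R_N(z)=iR_{N-1}(z)$, whose iterate is $\partial_\lambda^j R_N(\lambda r) = i^j r^j R_{N-j}(\lambda r)$.  Each $\partial_\lambda$ thus extracts a factor of $|x-y|$ and reduces the effective order of the remainder by one, so the previous two-region bounds applied to $R_{N-j}$ in place of $R_N$ yield the claimed $\widetilde O_k$ estimates.  The maximum derivative orders $(n-1)/2,\,(n+1)/2,\,(n+3)/2,\,(n+7)/2$ are calibrated so that the top derivative leaves a residual $\lambda$ exponent of exactly $(n-3)/2+\ell$, matching the degree of the polynomial $P$ and representing the natural ceiling for this expansion method.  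The main difficulty is organizational rather than analytical: keeping the truncation orders $N$ aligned with the statement at each level of refinement, tracking the two-region analysis consistently across every derivative order, and verifying the combinatorial cancellations that make the odd low-order Taylor coefficients vanish.
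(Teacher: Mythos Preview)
Your proposal is correct and follows essentially the same approach as the paper: both carry out a two-region analysis, using a Taylor expansion of the explicit formula~\eqref{Gn poly} when $\lambda|x-y|\les 1$ and direct size estimates on the resolvent and on each subtracted polynomial term when $\lambda|x-y|\gtrsim 1$.  The only stylistic difference is that you track the error through the integral remainder identity $\partial_z R_N = iR_{N-1}$, whereas the paper differentiates the explicit kernel directly and bounds $|\partial_\lambda^k R_0^\pm|$ and $|\partial_\lambda^k(\lambda^{n-2+j}G_{n-2+j})|$ separately in the large region; both routes yield the same bounds.
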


\begin{proof}

Using a Taylor series expansion on \eqref{Gn poly}
when $\lambda |x-y|\leq \frac{1}{2}$, one has
\begin{align*}
	\mathcal G_n(\lambda,r)&=G_0+\sum_{j=1}^{\frac{n-3}{2}}
	\lambda^{2j}G_{2j}
	+i\lambda^{n-2} G_{n-2}+\lambda^{n-1}G_{n-1}
	+i\lambda^n G_n
	+\lambda^{n+1}G_{n+1}\\
	&+i\lambda^{n+2}G_{n+2}+
	\widetilde O(\lambda^{n-2}(\lambda |x-y|)^{5}).
\end{align*}
This expansion can be truncated earlier, using 
$G_j(x,y)=c_j |x-y|^{n-2+j}$ and $\lambda |x-y|\les 1$.

On the other hand, if $\lambda|x-y|\gtrsim 1$ we note
that differentiation in $\lambda$ in \eqref{Gn poly}
is comparable to either division by $\lambda$ or multiplication by $|x-y|$.  So that, for
$0\leq k\leq \frac{n-1}{2}$,
\begin{align*}
	|\partial_\lambda^k R_0^{\pm}(\lambda^2)(x,y)|
	&\les \bigg(\frac{1}{|x-y|^{n-2}}
	+\frac{\lambda^{\frac{n-3}{2}}}{|x-y|^{\frac{n-1}{2}}}
	\bigg)(\lambda^{-k}+|x-y|^k)\les \lambda^{n-2-k}.
\end{align*}
Here we use that $|x-y|^{-1}\les \lambda$.  Using that
$\lambda|x-y|\gtrsim 1$, can can gain more $\lambda$
smallness at the cost of growth in $|x-y|$.  Specifically,
for any $\ell\geq 0$ and $0\leq k\leq \frac{n-1}{2}$,
\begin{align*}
	|\partial_\lambda^k R_0^{\pm}(\lambda^2)(x,y)|
	\les \lambda^{n-2-k}(\lambda|x-y|)^\ell.
\end{align*}

For the first expansion we take
$$
	E_0^{\pm}(\lambda)=R_0^{\pm}(\lambda^2)-G_0-
	\sum_{j=1}^{\frac{n-3}{2}}
	\lambda^{2j}G_{2j} \mp i\lambda^{n-2}G_{n-2}.
$$
The error bounds if $\lambda |x-y|\leq \frac{1}{2}$ are
clear from expanding \eqref{Gn poly} in a Taylor series. 
On the other hand, if  $\lambda |x-y|\gtrsim 1$ we have that
$$
	\bigg|\partial^{k}_\lambda \big(G_0+
		\sum_{j=1}^{\frac{n-3}{2}}
		\lambda^{2j}G_{2j}\mp i\lambda^{n-2}G_{n-2}
	\big)\bigg|
	\les \lambda^{n-2-k}.
$$
Where one uses that $|x-y|^{-1}\les \lambda$ in this
case.
The remaining error terms $E_1^\pm(\lambda)$ and
$E_2^\pm(\lambda)$ arise from subtracting more Taylor
terms from the free resolvent.  
$$
	E_1^{\pm}(\lambda)=R_0^{\pm}(\lambda^2)-G_0-
	\sum_{j=1}^{\frac{n-3}{2}}
	\lambda^{2j}G_{2j}\mp i\lambda^{n-2} G_{n-2}-\lambda^{n-1}G_{n-1},
$$
\begin{equation*}
	E_2^{\pm}(\lambda)=R_0^{\pm}(\lambda^2)-G_0-
	\sum_{j=1}^{\frac{n-3}{2}}
	\lambda^{2j}G_{2j}\mp i\lambda^{n-2} 
	G_{n-2}-\lambda^{n-1}G_{n-1}
	\mp i\lambda^n G_n,
\end{equation*}
and
\begin{align*}
E_3^{\pm}(\lambda)=R_0^{\pm}(\lambda^2)-G_0-
	\sum_{j=1}^{\frac{n-3}{2}}
	\lambda^{2j}G_{2j}\mp i\lambda^{n-2} 
	G_{n-2}&-\lambda^{n-1}G_{n-1}
	\mp i\lambda^n G_n \\
	&-\lambda^{n+1}G_{n+1} \mp i\lambda^{n+2}G_{n+2}.
\end{align*}

In particular, we note
that for $k\leq n-2+j$,
$$
	\big| \partial_\lambda^k \lambda^{n-2+j}G_{n-2+j}
	\big| \les \lambda^{n-2+j-k}G_{n-2+j}\les
	\lambda^{n-2+j-k}|x-y|^{j}.
$$
With the bound being zero if $k>n-2+j$.
As before, the bounds are clear for $\lambda|x-y|\leq \f12$, for $\lambda|x-y|\gtrsim 1$, one has for
any $\ell\geq 0$
$$
	\big| \partial_\lambda^k \lambda^{n-2+j}G_{n-2+j}
	\big| \les
	\lambda^{n-2+j-k}|x-y|^{j}(\lambda|x-y|)^\ell.
$$
Finally, we note that if $\lambda|x-y|\gtrsim 1$ and
$k\geq \frac{n+3}{2}$ when differentiating $R_0^\pm(\lambda^2)$,
multiplication by $|x-y|$ dominates
division by $\lambda$, thus 
we have
\begin{align*}
	|\partial_\lambda^k R_0^{\pm}(\lambda^2)(x,y)|
	\les \frac{\lambda^{\frac{n-3}{2}}}{|x-y|^{\frac{n-1}{2}}}
	(\lambda^{-k}+|x-y|^k)
	\les \lambda^{\frac{n-3}{2}} |x-y|^{k-\frac{n-1}{2}}
	(\lambda|x-y|)^\ell.
\end{align*}
This suffices to prove the bounds for $E_j^\pm(\lambda)$.

\end{proof}

It is important to note that the kernels of the operators
$G_j$ are strictly real-valued.  
The hypotheses of the lemma below are not optimal, but
suffice for our purposes.  We give the proof of this Lemma in Section~\ref{sec:ests}.

\begin{lemma}\label{lem:iterated}

	If $|V(x)|\les \la x\ra^{-\frac{n+1}{2}-}$, $\sigma>\f12$ and 
	$\kappa\geq \frac{n-3}{4}$, then
	\begin{align*}
		\| (R_0^\pm(\lambda)^2V)^{\kappa-1}(y,\cdot)R_0(\cdot, x)\|_{L^{2,-\sigma}_y}
		\les \la \lambda \ra^{\kappa(\frac{n-3}{2})}.
	\end{align*}
	uniformly in $x$.

\end{lemma}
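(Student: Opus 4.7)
The plan is to iterate an Agmon-type mapping estimate between polynomially weighted $L^2$ spaces, with a deliberately suboptimal operator norm of $\la\lambda\ra^{(n-3)/2}$ per resolvent (dictated by the pointwise Hankel asymptotics rather than the sharper limiting absorption principle bound). The starting point, extractable from \eqref{Hankel} and the proof of Lemma~\ref{lem:R0exp}, is the pointwise kernel bound
\begin{equation*}
|R_0^\pm(\lambda^2)(z,w)|\les |z-w|^{-(n-2)}+\la\lambda\ra^{(n-3)/2}|z-w|^{-(n-1)/2},
\end{equation*}
in which the first term dominates when $\lambda|z-w|\les 1$ and the second when $\lambda|z-w|\gtrsim 1$. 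Via a weighted Schur-type test this yields $R_0^\pm(\lambda^2):L^{2,\sigma}\to L^{2,-\sigma}$ with operator norm $\les \la\lambda\ra^{(n-3)/2}$ for any $\sigma>\f12$: the singular piece is absorbed by the mapping properties of the Riesz potential $I_2=(-\Delta)^{-1}:L^{2,\sigma}\to L^{2,-\sigma}$, and the slowly decaying tail is handled directly, since $|z-w|^{-(n-1)/2}\in L^{2,-\sigma}$ uniformly in $w$ whenever $\sigma>\f12$.

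The decay hypothesis $|V(x)|\les\la x\ra^{-(n+1)/2-}$ then furnishes the multiplication map $V:L^{2,-\sigma}\to L^{2,\sigma}$ for $\sigma$ just above $\f12$ (the compatibility requirement $2\sigma\le (n+1)/2$ is trivial for $n\ge 5$). Composing gives $R_0^\pm(\lambda^2)V:L^{2,-\sigma}\to L^{2,-\sigma}$ with operator norm $\les \la\lambda\ra^{(n-3)/2}$, and iteration $\kappa-1$ times produces a factor $\la\lambda\ra^{(\kappa-1)(n-3)/2}$. The remaining factor of $\la\lambda\ra^{(n-3)/2}$ in the target bound should then come from a uniform-in-$x$ estimate for $\|R_0^\pm(\lambda^2)(\cdot,x)\|_{L^{2,-\sigma}_y}$.

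The main obstacle is precisely this final step. For $n\ge 5$ the low-frequency singularity $|y-x|^{-(n-2)}$ satisfies $2(n-2)\ge n$, so $R_0(\cdot,x)\notin L^2_{\rm loc}$ and one cannot simply feed $R_0(\cdot,x)$ into the operator composition. My plan is to resolve this by folding the final $R_0$ into the preceding $R_0 V$ iterations: two adjacent low-frequency resolvents compose like the Riesz potential $I_4$, yielding a kernel of order $|y-x|^{-(n-4)}$ and gaining two spatial powers of local integrability per pair, while each intermediate $V$ supplies $\la z\ra^{-(n+1)/2-}$ of decay weight needed to close the weighted pairings. The threshold $\kappa\ge (n-3)/4$ is calibrated to be the minimum number of such pairings for which the cumulative smoothing renders the full iterated kernel in $L^{2,-\sigma}_y$ uniformly in $x$. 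The delicate book-keeping that simultaneously coordinates the low-frequency Riesz-potential pairings, the $V$-weights, and the $\la\lambda\ra^{(n-3)/2}$ contributions from the high-frequency regime is where the technical difficulty concentrates, and the integral estimates needed to carry this out are collected in Section~\ref{sec:ests}.
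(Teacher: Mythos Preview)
Your proposal is correct and takes essentially the same approach as the paper: both arguments rest on the pointwise kernel bound
$|R_0^\pm(\lambda^2)(z,w)|\les |z-w|^{2-n}+\la\lambda\ra^{(n-3)/2}|z-w|^{(1-n)/2}$
and then iterate a weighted convolution estimate (the paper's Corollary~\ref{EGcor}) to gain two powers of local regularity per $R_0V$ pairing, with the threshold $\kappa\ge\frac{n-3}{4}$ determined exactly as you describe. Your initial operator-theoretic framing ($R_0^\pm:L^{2,\sigma}\to L^{2,-\sigma}$, then compose) is a detour the paper skips---it goes straight to the iterated kernel integrals---but you correctly identify the obstruction ($R_0(\cdot,x)\notin L^2_{\rm loc}$ for $n\ge5$) and land on the same kernel-level resolution.
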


Define the operator $U:L^2\to L^2$ by its  kernel
$$
	U(x)=\left\{
		\begin{array}{ll}
		1 & V(x)\geq 0\\
		-1 & V(x)<0
		\end{array}
	\right.
$$
That is, $U$ is the sign of $V$.
Further define $v=|V|^{\f12}$, $w=Uv$ and
$$
M^{\pm}(\lambda)=U+vR_0^{\pm}(\lambda^2)v.
$$
We wish to use the symmetric resolvent identity, 
\begin{align}\label{symmresid}
	R_V^{\pm}(\lambda^2)=R_0^{\pm}(\lambda^2)-
	R_0^{\pm}(\lambda^2)vM^{\pm}
	(\lambda)^{-1}vR_0^{\pm}(\lambda^2),
\end{align}
which is valid for $\Im(\lambda)>0$,
to understand the spectral measure in \eqref{Stone}.

Lemma~\ref{lem:iterated} allows us to make sense of
the symmetric
resolvent identity, provided 
$|V(x)|\les \la x\ra^{-\frac{n+1}{2}-}$,
by iterating the standard resolvent identity
$$
	R_V^{\pm}(\lambda^2)=R_0^{\pm}(\lambda^2)-
	R_0^{\pm}(\lambda^2)VR_V^{\pm}(\lambda^2)
	=R_0^{\pm}(\lambda^2)-
	R_V^{\pm}(\lambda^2)VR_0^{\pm}(\lambda^2)
$$
on both sides of $M^{\pm}(\lambda)^{-1}$ in 
\eqref{symmresid} a sufficient number of times to get to a local $L^2$ space, from
which multiplication by $v$ takes the iterated resolvents
to $L^2$.

Our main tool used to invert 
$M^\pm(\lambda)=U+vR_0^\pm(\lambda^2)v$  
for small $\lambda$, is the
following  lemma (see Lemma 2.1  in \cite{JN}).
\begin{lemma}\label{JNlemma}
Let $A$ be a closed operator on a Hilbert space $\mathcal{H}$ and $S$ a projection. Suppose $A+S$ has a bounded
inverse. Then $A$ has a bounded inverse if and only if
$$
B:=S-S(A+S)^{-1}S
$$
has a bounded inverse in $S\mathcal{H}$, and in this case
$$
A^{-1}=(A+S)^{-1}+(A+S)^{-1}SB^{-1}S(A+S)^{-1}.
$$
\end{lemma}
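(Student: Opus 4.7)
The plan is to verify the stated formula for $A^{-1}$ by direct algebraic manipulation; no analytic machinery beyond the hypothesis that $A+S$ is boundedly invertible is required. Write $P := (A+S)^{-1}$. The basic identities $(A+S)P = P(A+S) = I$ can be rearranged as
\begin{equation*}
AP = I - SP, \qquad PA = I - PS,
\end{equation*}
and these two relations drive every subsequent calculation.

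For the direction ``$B$ invertible $\Rightarrow$ $A$ invertible,'' I would define $Q := P + PSB^{-1}SP$, where $B^{-1}$ is the given bounded inverse of $B$ on $S\mathcal{H}$. Using $AP = I - SP$ twice,
\begin{equation*}
AQ = (I - SP) + (I - SP)SB^{-1}SP = (I - SP) + (S - SPS)B^{-1}SP.
\end{equation*}
The range of $B^{-1}SP$ lies in $S\mathcal{H}$, and on $S\mathcal{H}$ the operator $S - SPS$ agrees with $B$; consequently $(S - SPS)B^{-1}SP = SP$, yielding $AQ = I$. The identity $QA = I$ is obtained by the mirror computation from $PA = I - PS$, and the boundedness of $Q$ follows from that of $P$, $S$, and $B^{-1}$.

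For the converse direction, assume $A^{-1}$ exists and is bounded; I would then propose $C := S + SA^{-1}S$ as the inverse of $B$ on $S\mathcal{H}$. For $f \in S\mathcal{H}$, $Cf = f + SA^{-1}f$ again lies in $S\mathcal{H}$, so that
\begin{equation*}
BCf = Cf - SP(Cf) = f + SA^{-1}f - SPf - SPSA^{-1}f.
\end{equation*}
The desired equality $BCf = f$ therefore reduces to the identity $APSA^{-1}f = SPf$, which I would establish by chaining the two basic relations: $PS = I - PA$ gives $PSA^{-1} = A^{-1} - P$, so $APSA^{-1}f = f - APf$, and then $AP = I - SP$ converts this into $SPf$. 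The companion identity $CB = S$ is verified symmetrically.

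There is no genuine analytic obstacle; the single point requiring attention is bookkeeping. One must keep track of the fact that $B^{-1}$ is only defined on $S\mathcal{H}$, verify that every input to $B^{-1}$ in the formula already lies in $S\mathcal{H}$, and then invoke $B|_{S\mathcal{H}} = I - SP|_{S\mathcal{H}}$ to collapse the algebra. Once these bookkeeping items are in place, the lemma follows without further ingredients.
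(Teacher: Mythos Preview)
Your proof is correct; the paper itself does not prove this lemma but quotes it verbatim from Jensen--Nenciu (Lemma~2.1 in \cite{JN}) and uses it as a black box, so there is no ``paper's own proof'' to compare against. Your direct algebraic verification via $AP = I - SP$ and $PA = I - PS$ is the standard argument.

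One minor presentational slip in the converse direction: the sentence ``the desired equality $BCf = f$ therefore reduces to the identity $APSA^{-1}f = SPf$'' is not the reduction you actually use. What you need is $SA^{-1}f - SPf - SPSA^{-1}f = 0$, and your identity $PSA^{-1} = A^{-1} - P$ gives this immediately upon left-multiplying by $S$: one gets $SPSA^{-1}f = SA^{-1}f - SPf$, which cancels the three residual terms. The detour through $APSA^{-1}f$ is unnecessary, but the ingredients are all there and the argument is sound.
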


Following the terminology used in \cite{Sc2,EG,EGG},

\begin{defin}
	We say an operator $K:L^2(\R^n)\to L^2(\R^n)$ with kernel
	$K(\cdot,\cdot)$ is absolutely bounded if the operator with kernel
	$|K(\cdot,\cdot)|$ is bounded from $L^2(\R^n)$ to $L^2(\R^n)$.
\end{defin}

We recall the definition of the Hilbert-Schmidt norm
of an operator $K$ with integral kernel
$K(x,y)$ ,
\begin{align*}
	\|K\|_{HS}=\bigg(\iint_{\R^{2n}} |K(x,y)|^2\, dx\, dy
	\bigg)^{\f12}.
\end{align*}
We note that Hilbert-Schmidt and finite rank operators
are immediately absolutely bounded.

\begin{lemma}\label{lem:Mexp}

	Assuming that $v(x)\les \la x\ra^{-\beta}$.
	If $\beta>\frac{n}{2}+\ell$, then for
	$0\leq \ell\leq 1$, we have
	\begin{align}
		M^{\pm}(\lambda)
		&=U+vG_0v+
		\sum_{j=1}^{\frac{n-3}{2}}
		\lambda^{2j}vG_{2j}v\pm i\lambda^{n-2}vG_{n-2}v
		+M_0^\pm(\lambda),\label{M0exp}
	\end{align}
	Where  the kernels of the
	operators $G_j$ are absolutely bounded with 
	real-valued kernels.
	Further,
	\begin{align}\label{M0err}
		\sum_{j=0}^{\frac{n-1}{2}} \| 
		\sup_{0<\lambda<\lambda_1} \lambda^{j+2-n-\ell}
		\partial_\lambda^j M_0^{\pm}(\lambda)\|_{HS}
		\les 1.
	\end{align}
	If $\beta>\frac{n}{2}+2+\ell$, then
	\begin{align}\label{M0exp2}
		M_0^{\pm}(\lambda)=
		\lambda^{n-1}
		vG_{n-1}v\pm i\lambda^n v G_n v+M_1^{\pm}(\lambda),
	\end{align}
	with
	\begin{align}\label{M1err}
		\sum_{j=0}^{\frac{n+1}{2}} \| 
		\sup_{0<\lambda<\lambda_1} \lambda^{j-n-\ell}
		\partial_\lambda^j M_1^{\pm}(\lambda)\|_{HS}
		\les 1.
	\end{align}
	If $\beta>\frac{n}{2}+4+\ell$, then
	\begin{align}\label{M1exp}
		M_1^{\pm}(\lambda)=
		\lambda^{n+1}v G_{n+1}v\pm i\lambda^{n+2}v
		G_{n+2}v
		+M_2^{\pm}(\lambda)
	\end{align}
	with
	\begin{align}\label{M2err}
		\sum_{j=0}^{\frac{n+1}{2}} \| 
		\sup_{0<\lambda<\lambda_1} \lambda^{j-n-2-\ell}
		\partial_\lambda^j M_2^{\pm}(\lambda)\|_{HS}
		\les 1.
	\end{align}

\end{lemma}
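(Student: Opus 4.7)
The three expansions all follow by sandwiching the corresponding free-resolvent expansions from Lemma~\ref{lem:R0exp} with the multiplication operator $v$. Substituting the first expansion of Lemma~\ref{lem:R0exp} into the definition $M^\pm(\lambda)=U+vR_0^\pm(\lambda^2)v$ and collecting terms identifies $M_0^\pm(\lambda)=vE_0^\pm(\lambda)v$; applying instead the third expansion shows that $M_0^\pm$ itself splits off the additional terms $\lambda^{n-1}vG_{n-1}v$ and $\pm i\lambda^n vG_nv$, leaving $M_1^\pm(\lambda)=vE_2^\pm(\lambda)v$. The fourth expansion in the same way yields $M_2^\pm(\lambda)=vE_3^\pm(\lambda)v$. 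Real-valuedness of each $G_j$ is immediate from the closed-form $G_j(x,y)=c_j|x-y|^{2+j-n}$ with real constants $c_j$, and absolute boundedness of the sandwiched operators $vG_jv$ follows because polynomial decay on $v$ turns each operator into one whose pointwise kernel is integrable in the manner of a Hilbert--Schmidt operator.

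The heart of the proof is the Hilbert--Schmidt control of the three errors, and all three cases follow the same template. For \eqref{M0err} the pointwise bound $|\partial_\lambda^j E_0^\pm(\lambda)(x,y)|\lesssim \lambda^{n-2+\ell-j}|x-y|^\ell$ from Lemma~\ref{lem:R0exp}, valid for $0\le j\le (n-1)/2$, gives
\begin{align*}
\sup_{0<\lambda<\lambda_1}\lambda^{j+2-n-\ell}\bigl|\partial_\lambda^j M_0^\pm(\lambda)(x,y)\bigr|\lesssim v(x)|x-y|^\ell v(y).
\end{align*}
Using the elementary bound $|x-y|^{2\ell}\lesssim \langle x\rangle^{2\ell}+\langle y\rangle^{2\ell}$ (valid since $0\le\ell\le 1$), the squared Hilbert--Schmidt norm reduces to
\begin{align*}
\iint_{\R^{2n}} v(x)^2 v(y)^2 |x-y|^{2\ell}\,dx\,dy \;\lesssim\; \Bigl(\int_{\R^n} v(x)^2\langle x\rangle^{2\ell}\,dx\Bigr)\Bigl(\int_{\R^n} v(y)^2\,dy\Bigr),
\end{align*}
which is finite precisely when $2\beta-2\ell>n$, i.e.\ under the stated hypothesis $\beta>n/2+\ell$.

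The bounds \eqref{M1err} and \eqref{M2err} follow identically, now with weights $|x-y|^{2+\ell}$ and $|x-y|^{4+\ell}$ respectively, so the Hilbert--Schmidt integrals converge exactly under $\beta>n/2+2+\ell$ and $\beta>n/2+4+\ell$. The number of derivatives required, $(n+1)/2$, lies within the ranges $(n+3)/2$ and $(n+7)/2$ permitted by the third and fourth parts of Lemma~\ref{lem:R0exp}. The mild point worth noting is that the supremum in $\lambda$ can be taken inside the HS norm at no cost: the pointwise estimates on $\partial_\lambda^j E_k^\pm$ factor cleanly into a power of $\lambda$ times a $\lambda$-independent function of $(x,y)$, so after multiplying by $\lambda^{j+\cdot-n-\ell}$ the $\lambda$-dependence cancels and the sup is attained by the same $(x,y)$-dependent majorant. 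I do not foresee any genuine obstacle here—the entire lemma is a bookkeeping consequence of Lemma~\ref{lem:R0exp} combined with the polynomial decay of $v$.
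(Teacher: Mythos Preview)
Your proof is correct and follows essentially the same approach as the paper: sandwich the free-resolvent expansions of Lemma~\ref{lem:R0exp} by $v$, then verify that the kernels $\la x\ra^{-\beta}|x-y|^k\la y\ra^{-\beta}$ are Hilbert--Schmidt precisely when $\beta>\tfrac{n}{2}+k$. The only minor imprecision is your remark that every $vG_jv$ is absolutely bounded ``in the manner of a Hilbert--Schmidt operator''---for small $j$ (e.g.\ $j=0$) the diagonal singularity $|x-y|^{2-n}$ is too strong for the HS criterion, and absolute boundedness instead comes from the $L^2$ mapping properties of the Riesz potential $I_2$ (cf.\ the proof of Lemma~\ref{lem:D0bdd}); the paper's proof likewise only asserts the HS bound for $j\geq \tfrac{n}{2}-2$.
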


\begin{proof}

	The proof follows from the definition of the operators
	$M^{\pm}(\lambda)$ and the expansion for the free
	resolvent in Lemma~\ref{lem:R0exp}.  The bound on the
	error terms follows from the fact 
	that if $k>-\frac{n}{2}$ then
	$\la x\ra^{-\beta}|x-y|^k \la y\ra^{-\beta}$ is 
	Hilbert-Schmidt, and hence absolutely bounded, 
	provided $\beta>\frac{n}{2}+k$.  So that
	the operators $vG_j v$ is Hilbert Schmidt for 
	$j\geq \frac{n}{2}-2$ provided
	$\beta>j+2-\frac{n}{2}$.

\end{proof}

\begin{rmk}

	The error estimates here can be more compactly
	summarized as
	\begin{align*}
		M_0^{\pm}(\lambda)=\widetilde O_{\frac{n-1}{2}}
		(\lambda^{n-2+\ell}), \qquad
		M_1^{\pm}(\lambda)=\widetilde O_{\frac{n+1}{2}}
		(\lambda^{n+\ell}), \qquad
		M_2^{\pm}(\lambda)=\widetilde O_{\frac{n+1}{2}}
		(\lambda^{n+2+\ell}),
	\end{align*}
	as absolutely bounded operators on $L^2(\R^n)$,
	for $0<\lambda<\lambda_1$.

\end{rmk}

We note that $U+vG_0v$ is not invertible if there
is an eigenvalue at zero, see 
Lemma~\ref{S characterization} below.
Define $S_1$ to be the Riesz projection onto the 
kernel of $U+vG_0v$ as an operator on $L^2(\R^n)$.  
Then the operator $U+vG_0v+S_1$ is invertible on 
$L^2$, accordingly we define
\begin{align}
	D_0:=(U+vG_0v+S_1)^{-1}.
\end{align}
This operator can be seen to be absolutely bounded.

\begin{lemma}\label{lem:D0bdd}

	If $|V(x)|\les \la x\ra^{-\frac{n+1}{2}-}$, then
	the operator $D_0$ is absolutely bounded in $L^2(\mathbb R^n)$.

\end{lemma}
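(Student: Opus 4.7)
The plan is to write $D_0$ as a finite sum of absolutely bounded terms plus a Hilbert--Schmidt remainder, by iterating a resolvent-type identity. Because $U^2=I$, left-multiplying $D_0(U+vG_0v+S_1)=I$ by $U$ gives, with $T:=vG_0v+S_1$, the identity $D_0=U-UT\,D_0$, which iterates to
\[
	D_0=\sum_{j=0}^{k-1}(-UT)^j U+(-UT)^k D_0
\]
for any integer $k\geq 1$. Since finite sums of absolutely bounded operators are absolutely bounded, and Hilbert--Schmidt operators are absolutely bounded, it will suffice to choose a finite $k=k(n)$ for which each $(UT)^j U$ in the sum is absolutely bounded and $(UT)^k$ is Hilbert--Schmidt; the remainder $(UT)^k D_0$ is then Hilbert--Schmidt as the composition of a Hilbert--Schmidt operator with the bounded operator $D_0$.

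Each of $U$, $vG_0v$, and $S_1$ is absolutely bounded: $U$ is multiplication by $\pm 1$; $vG_0v$ has the nonnegative kernel $c_0\,v(x)|x-y|^{2-n}v(y)$, which is bounded on $L^2(\R^n)$ by a Hardy--Littlewood--Sobolev estimate $\|vG_0v\|_{L^2\to L^2}\les\|v\|_{L^n}^2=\|V\|_{L^{n/2}}<\infty$ under the hypothesis $|V|\les \la x\ra^{-(n+1)/2-}$ with $n\geq 5$; and $S_1$ is the orthogonal projection onto a finite-dimensional subspace of $L^2$, so its kernel is a finite sum of tensor products of $L^2$ functions and is dominated pointwise by a sum of the form $\sum_j|\phi_j(x)||\phi_j(y)|$. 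Because the pointwise inequality $|AB(x,y)|\leq\int|A(x,z)||B(z,y)|\,dz$ shows $|AB|\leq|A|\,|B|$ as nonnegative kernels and domination of nonnegative kernels implies domination of $L^2\to L^2$ operator norms, the class of absolutely bounded operators is closed under composition, so every $(UT)^j U$ is absolutely bounded.

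The key step is showing that $(UT)^k$ is Hilbert--Schmidt for some finite $k$. Expanding $(UT)^k=(wG_0v+US_1)^k$ into $2^k$ products, every product containing at least one factor $US_1$ is of finite rank, hence Hilbert--Schmidt. The only remaining term is $(wG_0v)^k$, whose kernel satisfies the pointwise majorization
\[
	\bigl|(wG_0v)^k(x,y)\bigr|\leq (vG_0v)^k(x,y);
\]
this is proved by induction, since the intermediate products $v(z_j)w(z_j)=V(z_j)$ become $|V(z_j)|=v(z_j)^2$ under absolute values, regenerating exactly the kernel of $(vG_0v)^k$. It therefore suffices to show $(vG_0v)^k\in\mathcal{S}_2$ for some $k$. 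Under the decay hypothesis, $V\in L^{n/2}(\R^n)$ for $n\geq 5$, and Kato--Seiler--Simon-type estimates combined with $|\xi|^{-2}\in L^{n/2,\infty}(\R^n)$ place $vG_0v$ in the weak Schatten class $\mathcal{S}_{n/2,\infty}$; hence $(vG_0v)^k\in\mathcal{S}_{n/(2k),\infty}\subset\mathcal{S}_2$ for any $k>n/4$.

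The principal obstacle is this final Schatten estimate: a naive Hilbert--Schmidt computation fails in dimensions $n\geq 5$ because $\int|x-y|^{2(2-n)}\,dy$ is not locally integrable, so $vG_0v$ itself is not Hilbert--Schmidt, and one must iterate enough times to descend into $\mathcal{S}_2$. A self-contained alternative avoids weak Schatten classes entirely: apply Lemma~\ref{lem:iterated} at $\lambda=0$ with $\kappa\geq\lceil(n-3)/4\rceil$ and $\sigma$ slightly above $1/2$, exploit the $x\leftrightarrow y$ symmetry of the kernel of $(G_0V)^{\kappa-1}G_0$ to pair a uniform-in-$x$ $L^{2,-\sigma}_y$ bound with a uniform-in-$y$ $L^{2,-\sigma}_x$ bound, and use the extra $v(x)v(y)\les\la x\ra^{-\sigma}\la y\ra^{-\sigma}$ weights to conclude $L^2_{x,y}$ integrability of the kernel of $(vG_0v)^{\kappa+1}$.
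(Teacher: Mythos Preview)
Your proof is correct and follows essentially the same strategy as the paper: iterate a resolvent-type identity for $D_0$ to write it as a finite sum of absolutely bounded operators plus a tail that is Hilbert--Schmidt composed with the bounded operator $D_0$. The paper iterates in the other direction, $D_0=U-D_0(vG_0v+S_1)U$, but this is immaterial.

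The only substantive difference is in how you establish that the iterated kernel is Hilbert--Schmidt. The paper argues directly via the integral estimates behind Lemma~\ref{lem:iterated} (each application of $vG_0v$ gains two powers of local regularity, so $k\geq\lceil(n-3)/4\rceil$ iterations land in $L^2$), which is exactly the ``self-contained alternative'' you offer at the end. Your primary route through weak Schatten classes and the Kato--Seiler--Simon bound $vG_0v\in\mathcal S_{n/2,\infty}$ is a clean and perfectly valid substitute that avoids the kernel-chasing; it buys a slightly more conceptual argument at the cost of invoking machinery the paper does not otherwise use.
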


\begin{proof}

	We  have the resolvent identity
	$$
		D_0=U-D_0(vG_0v+S_1)U.
	$$
	Iterating this identity, we have for $k> 1$,
	\begin{align*}
		D_0&=U+\sum_{j=1}^{k-1} (-1)^jU((vG_0v+S_1)U)^j
		+(-1)^k D_0[(vG_0v+S_1)U]^k.
	\end{align*}
	Using Lemma~\ref{EG:Lem} as in the proof
	Lemma~\ref{lem:iterated} (see Section~\ref{sec:ests}) one can see that each 
	iteration of $vG_0v$ reduces the local singularity
	by two powers.  Thus, we need to iterate the the
	identities at least $k=\lceil
	\frac{n-3}{4}\rceil$ terms to get
	the final iterated operator to be locally $L^2$.
	
	We note that $U$ is clearly absolutely bounded on
	$L^2(\R^n)$. One can note that
	$$
		0=S_1(U+vG_0v) \quad \Rightarrow \quad
		S_1U=-S_1vG_0v \quad \Rightarrow \quad
		S_1=-S_1vG_0w.
	$$
	To see that the mapping properties of $S_1$ are
	at least as good as those of $vG_0v$.
	Alternatively, since $U+vG_0v$ is a compact
	perturbation of the invertible operator $U$, the
	Fredholm alternative guarantees that the projection
	operator $S_1$ is finite rank.
	
	The operator $G_0$ is a scalar multiple
	of the fractional integral operator $I_2$ which
	is a compact operator on $L^{2,\sigma}\to L^{2,-\sigma}$ for $\sigma>1$ by Lemma~2.3 of
	\cite{Jen}.	 Then $vG_0w$ is absolutely bounded on
	$L^2$ provided $v(x)\les \la x\ra^{-1-}$.
	
	So that
	$D_0$ is the sum of absolutely bounded operators,
	$U$, and a Hilbert-Schmidt operators.  Since
	$D_0[(vG_0v)U]^k$ is a bounded operator composed 
	with a
	Hilbert-Schmidt operator, it is Hilbert-Schmidt.

\end{proof}
The above proof is valid whether zero is regular or not.
When zero is regular, $S_1=0$ so many of the terms considered vanish.

We will apply Lemma~\ref{JNlemma} with $A=M^\pm(\lambda)$ and $S=S_1$,
the Riesz projection onto the kernel of
$U+vG_0v$. Thus, we need to show that $M^{\pm}(\lambda)+S_1$
has a bounded inverse in $L^2(\mathbb R^n)$ and
\begin{align}\label{B defn}
  B_{\pm}(\lambda) =S_1-S_1(M^\pm(\lambda)+S_1)^{-1}S_1
\end{align}
has a bounded inverse in $S_1L^2(\mathbb R^n)$.

\begin{lemma}\label{M+S inverse}

	Suppose that zero is not a regular point of the
	spectrum of $H=-\Delta+V$, and let $S_1$ be the
	corresponding Riesz projection on the the zero
	energy eigenspace.  The for sufficiently small
	$\lambda_1>0$, the operators $M^\pm(\lambda)+S_1$ 
	are invertible for all $0<\lambda<\lambda_1$ as
	bounded operators on $L^2(\R^n)$.  Further,
	for any $0\leq \ell\leq 1$, if 
	$\beta>\frac{n}{2}+\ell$ then
	we have the following expansions
	\begin{align*}
		(M^{\pm}(\lambda)+S_1)^{-1}
		&=D_0+\sum_{j=1}^{\frac{n-3}{2}}
		\lambda^{2j}C_{2j}
		\mp i\lambda^{n-2}D_0vG_{n-2}vD_0
		+\widetilde M_0^\pm(\lambda)
	\end{align*}
	where $\widetilde M_0^{\pm}(\lambda)$ satisfies the
	same bounds as $M_0^{\pm}(\lambda)$ and the operators
	$C_{k}$ are absolutely bounded on $L^2$ with
	real-valued kernels.
	Further, if $\beta>\frac{n}{2}+2+\ell$ then
	\begin{align*}
		\widetilde M_0^{\pm}(\lambda)= 
		\lambda^{n-1} C_{n-1}\pm i\lambda^n C_n
		+\widetilde M_1^\pm(\lambda)
	\end{align*}
	where 	$C_n= D_0vG_{n-2}vD_0vG_2vD_0
	+D_0vG_2vD_0vG_{n-2}vD_0-D_0vG_{n}vD_0$, and
	$\widetilde M_1^{\pm}(\lambda)$ satisfies the
	same bounds as $M_1^{\pm}(\lambda)$.
	Further, if $\beta>\frac{n}{2}+4+\ell$ then	
	\begin{align*}
		\widetilde M_1^{\pm}(\lambda)=
		\lambda^{n+1}C_{n+1} \pm i\lambda^{n+2} C_{n+2}+
		\widetilde M_2^\pm (\lambda)
	\end{align*}
	where 
	$\widetilde M_2^{\pm}(\lambda)$ satisfies the
	same bounds as $M_2^{\pm}(\lambda)$.

\end{lemma}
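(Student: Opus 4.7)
The plan is to treat $M^\pm(\lambda)+S_1$ as an analytic perturbation of the absolutely bounded, invertible operator $U+vG_0v+S_1$. First I would factor
\[
M^\pm(\lambda)+S_1=(U+vG_0v+S_1)\bigl(I+D_0\,T^\pm(\lambda)\bigr),\qquad T^\pm(\lambda):=M^\pm(\lambda)-U-vG_0v,
\]
so that invertibility on $L^2$ is equivalent to invertibility of $I+D_0T^\pm(\lambda)$. The expansion~\eqref{M0exp} shows that $T^\pm(\lambda)$ starts at order $\lambda^2vG_2v$, and together with the absolute boundedness of $D_0$ supplied by Lemma~\ref{lem:D0bdd} this gives $\|D_0T^\pm(\lambda)\|_{L^2\to L^2}<\tfrac{1}{2}$ once $\lambda_1$ is chosen small enough. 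The Neumann series
\[
(M^\pm(\lambda)+S_1)^{-1}=\sum_{k\ge 0}(-1)^k\bigl(D_0T^\pm(\lambda)\bigr)^k D_0
\]
then converges in operator norm and delivers both the existence of the inverse and the starting point of the expansion.

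The coarse expansion would be extracted by substituting~\eqref{M0exp} into the Neumann series and collecting like powers of $\lambda$. For $1\le j\le\tfrac{n-3}{2}$ the coefficient of $\lambda^{2j}$ is a finite sum of $k$-fold products $D_0(vG_{2j_1}v)D_0\cdots(vG_{2j_k}v)D_0$ with $j_1+\cdots+j_k=j$; since every $vG_{2i}v$ and $D_0$ is absolutely bounded with real kernel, the sum defines a real, absolutely bounded operator which I name $C_{2j}$. Because $n-2$ is odd while the real powers available inside $T^\pm$ are all even and $\le n-3$, no Neumann iteration of order two or higher can produce an explicit $\lambda^{n-2}$ term; the only explicit contribution at that order is the first-order $\mp iD_0vG_{n-2}vD_0$, and every remaining piece (including $D_0M_0^\pm(\lambda)D_0$ and all longer Neumann strings containing a factor of $M_0^\pm$) is absorbed into $\widetilde M_0^\pm(\lambda)$.

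The refined expansions~\eqref{M0exp2}--\eqref{M1exp} of $M_0^\pm$ and $M_1^\pm$ permit the identical bookkeeping to pull off the coefficients at $\lambda^{n-1},\lambda^n,\lambda^{n+1},\lambda^{n+2}$. The real operators $C_{n-1}$ and $C_{n+1}$ absorb the respective first-order terms $-D_0vG_{n-1}vD_0$, $-D_0vG_{n+1}vD_0$ together with every compatible higher-order real Neumann product of lower-order factors, while $C_{n+2}$ collects $-D_0vG_{n+2}vD_0$ and its analogous companions. The coefficient of $\pm i\lambda^n$ receives three contributions: the first-order term $-D_0vG_nvD_0$ (from $\pm i\lambda^n vG_nv$ inside $T^\pm$) together with the two second-order terms $D_0vG_{n-2}vD_0vG_2vD_0$ and $D_0vG_2vD_0vG_{n-2}vD_0$ (from $(\pm i\lambda^{n-2})(\lambda^2)$ and $(\lambda^2)(\pm i\lambda^{n-2})$), whose signed sum reproduces the stated formula for $C_n$.

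The principal obstacle is showing that each error $\widetilde M_j^\pm(\lambda)$ inherits the differentiable $\widetilde O$-bound of the corresponding $M_j^\pm(\lambda)$. I would split $\widetilde M_j^\pm(\lambda)$ into two pieces: a truncated Neumann tail $\sum_{k\ge k_0}(-D_0T^\pm)^kD_0$ and finitely many lower-order Neumann products in which at least one factor equals an $M_j^\pm(\lambda)$. The tails are $\widetilde O(\lambda^{2k_0})$, since a derivative on a $T^\pm$ factor costs at most one power of $\lambda$ relative to the $\widetilde O(\lambda^2)$ scale of $T^\pm$, and choosing $k_0$ sufficiently large makes their contribution cheaper than the required error class. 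For the remaining low-order terms, Leibniz pairs each derivative falling on a polynomial factor $\lambda^{2j_i}vG_{2j_i}v$ with the trivial $O(\lambda^{2j_i-r})$ estimate, and each derivative falling on the $M_j^\pm$ factor with~\eqref{M0err},~\eqref{M1err}, or~\eqref{M2err}. Summing these contributions, and inheriting the $\ell\in[0,1]$ flexibility directly from Lemma~\ref{lem:Mexp}, produces precisely the stated differentiable error bounds for $\widetilde M_0^\pm$, $\widetilde M_1^\pm$, and $\widetilde M_2^\pm$.
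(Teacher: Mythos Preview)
Your proposal is correct and follows essentially the same Neumann-series approach as the paper: factor out $D_0=(U+vG_0v+S_1)^{-1}$, expand $(I+D_0T^\pm(\lambda))^{-1}$ as a geometric series, and read off coefficients, with the same identification of $C_n$ from the first- and second-order terms. One small organizational remark: your split of the remainder into the Neumann tail plus low-order products containing an $M_j^\pm$ factor omits the low-order pure-polynomial products of total degree $\ge n-1$ (and products containing one or more $\pm i\lambda^{n-2}vG_{n-2}v$ factors beyond the first), but these are polynomials in $\lambda$ and trivially satisfy every $\widetilde O$ bound, so the omission is harmless.
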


\begin{proof}

We use a Neumann series expansion.  We show the case of
$M^+$ and omit the superscript, the `-' case follows
similarly.  Using \eqref{M0exp} we have
\begin{multline*}
	(M(\lambda)+S_1)^{-1}=(U+vG_0v+S_1
	+\sum_{j=1}^{\frac{n-3}{2}}
	\lambda^{2j}vG_{2j}v+i\lambda^{n-2}vG_{n-2}v
	+M_0(\lambda))^{-1}\\
	=D_0(\mathbbm 1+\sum_{j=1}^{\frac{n-3}{2}}
	\lambda^{2j}vG_{2j}vD_0+i\lambda^{n-2}vG_{n-2}vD_0
	+M_0(\lambda)D)^{-1}\\
	=D_0-\lambda^2 D_0vG_2vD_0+\lambda^4
	[D_0vG_2vD_0vG_2vD_0-D_0vG_4vD_0]
	+\sum_{j=3}^{\frac{n-3}{2}} \lambda^{2j} C_{2j}
	\\-D_0[i\lambda^{n-2}vG_{n-2}v+M_0(\lambda)]D_0
	+\lambda^{2}[D_0vG_{2}vD_0
	[i\lambda^{n-2}vG_{n-2}v+M_0(\lambda)]D_0\\
	+\lambda^2 D_0[i\lambda^{n-2}vG_{n-2}v+M_0(\lambda)]D_0vG_2vD_0]
	+\lambda^{4}[D_0vG_{4}vD_0[i\lambda^{n-2}vG_{n-2}v+M_0(\lambda)]]D_0\\
	+\lambda^4 D_0[i\lambda^{n-2}vG_{n-2}v+M_0(\lambda)]G_4vD_0]+\widetilde M_2(\lambda).
\end{multline*}
One can find explicitly the operators $C_{2j}$ in terms of
$D_0$ and the operators $G_{2k}$, but this is not worth
the effort.  One only needs that these operators are
absolutely bounded with real-valued kernels.  These 
properties are inherited from $D_0$ and $vG_{2k}v$
using that the composition of absolutely bounded operator
are absolutely bounded.

What is important, in our analysis in Section~\ref{sec:disp}, is the odd powers of 
$\lambda$.
We note that the first odd power of $\lambda$ arises in
\begin{multline*}
	-D_0[i\lambda^{n-2}vG_{n-2}v+M_0(\lambda)]D_0
	=-D_0[i\lambda^{n-2}vG_{n-2}v+\lambda^{n-1}
	vG_{n-1}v+i\lambda^n vG_n v+
	M_1(\lambda)]D_0\\
	=-i\lambda^{n-2}D_0vG_{n-2}vD_0
	-\lambda^{n-1}D_0vG_{n-1}
	vD_0-i\lambda^n D_0vG_nvD_0-D_0 M_1(\lambda) D_0
\end{multline*}
The next odd power occurs from the $i\lambda^n D_0vG_nvD_0$ 
term
and the `$x^2$' term in the Neumann series,
that is the term with both $G_2$ and $i\lambda^{n-2}vG_{n-2}v$, accordingly
we see that the $\lambda^n$ term is given by
\begin{align*}
	i\lambda^n[ D_0vG_{n-2}vD_0vG_2vD_0
	+ D_0vG_2vD_0vG_{n-2}vD_0-D_0vG_{n}vD_0]
\end{align*}
The operators that accompany $\lambda^{n+2}$ are not
calculated explicitly, but again are absolutely bounded.
The error bounds follow from the bounds in 
Lemma~\ref{lem:Mexp} and the Neumann series expansion
above.

\end{proof}

\begin{rmk}\label{rmk:reg}

	We note here that is zero is regular the above
	Lemma suffices to establish the dispersive estimates
	using the techniques in Sections~\ref{sec:disp}
	and \ref{sec:finitebs}.
	In this case, $S_1=0$, $D_0=(U+vG_0v)^{-1}$ is still absolutely
	bounded and we have the expansion
	\begin{align*}
		M^{\pm}(\lambda)^{-1}
		&=D_0+\sum_{j=1}^{\frac{n-3}{2}}
		\lambda^{2j}C_{2j}
		\mp i\lambda^{n-2}D_0vG_{n-2}vD_0
		+\widetilde M_0^\pm(\lambda),
	\end{align*}
	with $C_{2j}$ real-valued, absolutely bounded operators.

\end{rmk}

Now we turn to the operators $B_{\pm}(\lambda)$ for
use in Lemma~\ref{JNlemma}.  
Recall that
$$
	B_{\pm}(\lambda)=S_1-S_1(M^\pm(\lambda)+S_1)^{-1}S_1,
$$
and that $S_1D_0=D_0S_1=S_1$.  Thus
\begin{align*}
	B_{\pm}(\lambda)&=S_1-S_1[D_0+
	\sum_{j=1}^{\frac{n-3}{2}}\lambda^{2j}C_{2j}
	\mp i\lambda^{n-2}D_0vG_{n-2}vD_0
	+\widetilde M_0^{\pm}
	(\lambda)]S_1\\
	&=
	-\sum_{j=1}^{\frac{n-3}{2}}
	\lambda^{2j}S_1 C_{2j}S_1
	\pm i\lambda^{n-2}S_1vG_{n-2}vS_1
	-S_1\widetilde M_0^\pm(\lambda)S_1\\
	&=-\lambda^2 S_1vG_2vS_1
	-\sum_{j=2}^{\frac{n-3}{2}}
	\lambda^{2j}S_1 C_{2j}S_1
	\pm i\lambda^{n-2}S_1vG_{n-2}vS_1
	-S_1\widetilde M_0^\pm(\lambda)S_1	
\end{align*}
So that the invertibility of $B_{\pm}(\lambda)$
hinges upon the invertibility of the operator
$S_1D_0vG_2vD_0S_1=S_1vG_2vS_1$ on $S_1L^2(\R^n)$, 
which is
established in Lemma~\ref{D1 lemma} below.  Thus
we define $D_1:=(S_1vG_2vS_1)^{-1}$ as
an operator on $S_1L^2(\R^n)$.  Noting that
$D_1=S_1D_1S_1$, it is clear that $D_1$ is absolutely
bounded.

\begin{lemma}\label{lem:Binv}

	We have the following expansions, if $\beta>\frac{n}{2}+\ell$ for any $0\leq \ell\leq 1$,
	then 
	\begin{align*}
		B_{\pm}(\lambda)^{-1}
		&=-\frac{D_1}{\lambda^2}+\sum_{j=2}^{\frac{n-3}{2}}
		\lambda^{2j-4}B_{2j}\pm i\lambda^{n-6}D_1 vG_{n-2}vD_1
		+\widetilde B_0^\pm(\lambda),
	\end{align*}
	where $\widetilde B_0^{\pm}(\lambda)$ satisfies the
	same bounds as $\lambda^{-4} M_0^{\pm}(\lambda)$ and the operators
	$B_{k}$ are absolutely bounded on $L^2$ with
	real-valued kernels.
	Further, if $\beta>\frac{n}{2}+2+\ell$ then
	\begin{align*}
		\widetilde B_0^{\pm}(\lambda)=
		\lambda^{n-5} B_{n-1}\pm i\lambda^{n-4} B_n
		+\widetilde B_1^\pm(\lambda),
	\end{align*}
	where
		$B_n=D_1vG_{n-2}vD_0vG_2vD_1
		+D_1vG_2vD_0vG_{n-2}vD_1-D_1vG_{n}vD_1
		-D_1vG_{n-2}vD_1C_4D_1-D_1C_4D_1vG_{n-2}vD_1$, and
	 $\widetilde B_1^{\pm}(\lambda)$ satisfies the
	same bounds as $\lambda^{-4} M_1^{\pm}(\lambda)$.
	Further, if $\beta>\frac{n}{2}+4+\ell$ then	
	\begin{align*}
		\widetilde B_1^{\pm}(\lambda)=\lambda^{n-3}B_{n+1} 
		\pm i\lambda^{n-2}B_{n+2}+
		\widetilde B_2^\pm (\lambda),
	\end{align*}
	where 
	$\widetilde B_2^{\pm}(\lambda)$ satisfies the
	same bounds as $\lambda^{-4} M_2^{\pm}(\lambda)$.

\end{lemma}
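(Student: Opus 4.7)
The plan is to invert $B_\pm(\lambda)$ by a Neumann series, starting from the decomposition computed just above the lemma. Setting
\[
\mathcal R_\pm(\lambda) := -\sum_{j=2}^{\frac{n-3}{2}} \lambda^{2j} S_1 C_{2j} S_1 \pm i \lambda^{n-2} S_1 v G_{n-2} v S_1 - S_1 \widetilde M_0^\pm(\lambda) S_1,
\]
we have $B_\pm(\lambda) = -\lambda^2 S_1 v G_2 v S_1 + \mathcal R_\pm(\lambda)$. Invoking Lemma~\ref{D1 lemma} for the invertibility of $S_1 v G_2 v S_1$ on $S_1 L^2(\R^n)$ and writing $D_1$ for its inverse, I would factor out the leading operator to obtain
\[
B_\pm(\lambda)^{-1} = -\lambda^{-2} D_1 \sum_{k \geq 0} \bigl(\lambda^{-2} \mathcal R_\pm(\lambda) D_1\bigr)^k.
\]
Since $\lambda^{-2} \mathcal R_\pm(\lambda) D_1$ begins at order $\lambda^2$ as an absolutely bounded operator on $S_1 L^2$ (its leading piece is $\lambda^2 D_1 C_4 D_1$), the series converges in operator norm for $0<\lambda<\lambda_1$ when $\lambda_1$ is small enough.

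To extract the first expansion, keep the $k=0$ contribution $-\lambda^{-2} D_1$ and expand $-\lambda^{-4} D_1 \mathcal R_\pm(\lambda) D_1$ summand by summand. The sum $-\sum_{j=2}^{\frac{n-3}{2}} \lambda^{2j} S_1 C_{2j} S_1$ contributes the $\lambda^{2j-4} B_{2j}$ terms, where $B_{2j}$ is built from $D_1$, $D_0$ and the operators $vG_{2k}v$, inheriting absolute boundedness and real-valuedness. The $\pm i \lambda^{n-2} S_1 v G_{n-2} v S_1$ piece yields the explicit $\lambda^{n-6}$ term, while the $\widetilde M_0^\pm$ remainder and every $k\geq 2$ Neumann contribution are folded into $\widetilde B_0^\pm(\lambda)$. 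The claimed error bound follows from the bound on $\widetilde M_0^\pm$ in Lemma~\ref{M+S inverse} with the $\lambda^{-4}$ prefactor accounted for, and derivatives in $\lambda$ are handled by Leibniz applied to the (finitely truncated) Neumann series. The refined statements for $\widetilde B_1^\pm$ and $\widetilde B_2^\pm$ then follow by the same procedure after substituting the finer decompositions of $\widetilde M_0^\pm$ and $\widetilde M_1^\pm$ and carrying the Neumann sum one term further.

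The main obstacle is the bookkeeping for $B_n$: the $\pm i \lambda^{n-4}$ coefficient draws from two sources. First, the $k=1$ Neumann term applied to the $\pm i \lambda^n S_1 C_n S_1$ piece of $S_1 \widetilde M_0^\pm S_1$ unfolds, via the identities $S_1 D_0 = D_0 S_1 = S_1$ and consequently $D_1 D_0 = D_0 D_1 = D_1$, into $D_1 v G_{n-2} v D_0 v G_2 v D_1 + D_1 v G_2 v D_0 v G_{n-2} v D_1 - D_1 v G_n v D_1$. Second, the $k=2$ Neumann term pairs $-\lambda^4 S_1 C_4 S_1$ with $\pm i \lambda^{n-2} S_1 v G_{n-2} v S_1$ in both orders, producing the two cross terms $-D_1 v G_{n-2} v D_1 C_4 D_1 - D_1 C_4 D_1 v G_{n-2} v D_1$. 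A power count confirms that all other $k\geq 2$ pairings enter at strictly higher order in $\lambda$ and so are absorbed into $\widetilde B_1^\pm(\lambda)$. No analytic input beyond Lemmas~\ref{lem:Mexp} and~\ref{M+S inverse} is required.
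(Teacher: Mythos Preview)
Your proposal is correct and follows essentially the same route as the paper: factor out $-\lambda^2 S_1vG_2vS_1$, invert via a Neumann series, and track the first two odd powers of $\lambda$, with the $\lambda^{n-4}$ coefficient assembled from the $k=1$ term (through $S_1C_nS_1$) and the $k=2$ cross terms between $S_1C_4S_1$ and $S_1vG_{n-2}vS_1$. The only slip is cosmetic: the leading piece of $\lambda^{-2}\mathcal R_\pm(\lambda)D_1$ is $-\lambda^2 S_1C_4D_1$, not $\lambda^2 D_1C_4D_1$ (the extra $D_1$ appears only after the outer prefactor), but this does not affect the argument.
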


\begin{proof}

As usual we consider the `+' case and omit subscripts,
the `-' case follows similarly.  We begin by noting that
\begin{align}
	B^{-1}(\lambda)&=[-\lambda^2 S_1vG_2vS_1
	-\sum_{j=2}^{\frac{n-3}{2}}
	\lambda^{2j}S_1 C_{2j}S_1
	\pm i\lambda^{n-2}S_1vG_{n-2}vS_1
	-S_1\widetilde M_0^\pm(\lambda)S_1]^{-1}\nn\\
	&=-\frac{D_1}{\lambda^2}[\mathbbm 1 +
	\sum_{j=2}^{\frac{n-3}{2}}
	\lambda^{2j-2}S_1 C_{2j}S_1D_1
	\mp i\lambda^{n-4}S_1vG_{n-2}vS_1D_1
	+\lambda^{-2}S_1\widetilde M_0^\pm(\lambda)
	S_1D_1]^{-1}.\label{Binv1}
\end{align}
We again only concern ourselves with explicitly finding
the operators for the first two odd powers of $\lambda$
that occur.  This again follows by a careful analysis
of the various terms that arise in the Neumann series
expansion.  It is clear from the expansion
\eqref{Binv1} that the error terms for 
$B^{-1}(\lambda)$ are of size $\lambda^{-4}\widetilde M_{j}(\lambda)$ by performing the Neumann series expansion.

Since the odd powers of $\lambda$ in this
expansion can only arise from odd powers in the
expansion for $B^\pm(\lambda)$ in combination with an
even power of $\lambda$ in the Neumann series we can
identify these operators explicitly.

We note that the $\lambda^{n-6}$ term in the expansion
can come only from the $\lambda^{n-2}$ term in the
expansion in Lemma~\ref{M+S inverse}.  In the `$x$'
term of the Neumann series we see
$$
	-\frac{D_1}{\lambda^2}  \big(\lambda^{-2}\big)
	\big(
	\mp i\lambda^{n-4}S_1vG_{n-2}vS_1D_1
	\big)=\pm i \lambda^{n-6} D_1vG_{n-2}vD_1
$$
The next odd power of $\lambda$ is the $\lambda^{n-4}$.
This picks up contributions from the `$x$' term in the
Neumann series from the contribution of the $\lambda^n$
power in the expansion for $\widetilde M_0^\pm(\lambda)$ in 
Lemma~\ref{M+S inverse}, 
\begin{align*}
	-\frac{D_1}{\lambda^2}&\big(\mp i \lambda^{n-2}S_1C_nD_1\big)\\
	&=\pm i \lambda^{n-4}
	D_1 [D_0vG_{n-2}vD_0vG_2vD_0
	+D_0vG_2vD_0vG_{n-2}vD_0-D_0vG_{n}vD_0]D_1\\
	&=\pm i\lambda^{n-4}[D_1vG_{n-2}vD_0vG_2vD_1
	+D_1vG_2vD_0vG_{n-2}vD_1-D_1vG_{n}vD_1].
\end{align*}
The other contribution is from the ` $x^2$ term of
the Neumann series, specifically the interaction of the
$\lambda^2S_1C_4D_1$ term in \eqref{Binv1} and the
$\lambda^{n-2}$ term from Lemma~\ref{M+S inverse}.
These contribute
\begin{align*}
	-\frac{D_1}{\lambda^2}&\bigg\{\big(\lambda^2 S_1C_4D_1
	\big)\big(\lambda^{n-4}S_1vG_{n-2}vD_1\big)
	+ \big(\lambda^{n-4}S_1vG_{n-2}vD_1
	\big)\big(\lambda^2 S_1C_4D_1\big)
	\bigg\}\\
	&=-\lambda^{n-4}\big(D_1C_4D_1vG_{n-2}vD_1+D_1vG_{n-2}
	vD_1C_4D_1\big).
\end{align*}

\end{proof}

\begin{rmk}

	The error estimates here can be more compactly
	summarized as
	\begin{align*}
		B_0^{\pm}(\lambda)=\widetilde O_{\frac{n-1}{2}}
		(\lambda^{n-6+\ell}), \qquad
		B_1^{\pm}(\lambda)=\widetilde O_{\frac{n+1}{2}}
		(\lambda^{n-4+\ell}), \qquad
		B_2^{\pm}(\lambda)=\widetilde O_{\frac{n+1}{2}}
		(\lambda^{n-2+\ell}),
	\end{align*}
	as absolutely bounded operators on $L^2(\R^n)$,
	for $0<\lambda<\lambda_1$.

\end{rmk}

Later on, inspired by Remark~8.3 in \cite{Jen}
we consider eigenfunctions with certain
orthogonality conditions.  Accordingly, we consider 
two additional cases: first when 
$P_eVG_{n-2}=c_{n-2} P_eV1=0$, and secondly when $P_eV1=0$ and
$P_{e}Vx=0$ which, we see in the next Corollary, implies that $P_{e}VG_{n}VP_e=0$.

\begin{corollary}

	Under the hypotheses of Lemma~\ref{lem:Binv}, if
	$P_eV1=0$ then the first odd power of 
	$\lambda$ in the expansion occurs at $\lambda^{n-4}$.
	That is, if $\beta>\frac{n}{2}+2+\ell$
	\begin{align*}
		B_{\pm}(\lambda)^{-1}
		&=-\frac{D_1}{\lambda^2}+\sum_{j=2}^{\frac{n-3}{2}}
		\lambda^{2j-4}B_{2j}
		+\lambda^{n-5} B_{n-1}\pm i\lambda^{n-4} B_n
		+\widetilde B_1^\pm(\lambda).
	\end{align*}
	If, in addition, $P_{e}Vx=0$ then
	$B_n=0$ in the expansion for $B_{\pm}(\lambda)^{-1}$.
	That is, if $\beta>\frac{n}{2}+4+\ell$
	\begin{align*}
		B_{\pm}(\lambda)^{-1}
		&=-\frac{D_1}{\lambda^2}+\sum_{j=2}^{\frac{n-3}{2}}
		\lambda^{2j-4}B_{2j}
		+\lambda^{n-5} B_{n-1}
		+\lambda^{n-3}B_{n+1} \pm i\lambda^{n-2}B_{n+2}+
		\widetilde B_2^\pm (\lambda).
	\end{align*}

\end{corollary}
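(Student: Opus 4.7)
The plan is to revisit the Neumann series computation in the proof of Lemma~\ref{lem:Binv} and verify that the two orthogonality conditions eliminate the offending summands. The decisive translation step converts the operator-level conditions $P_eV1=0$ and $P_eVx=0$ into function-level annihilation by $D_1$. As noted in the excerpt, the eigenspace ${\rm Null}\,H$ is in bijection with $S_1L^2$ via $\psi\mapsto w\psi$. Using $V=vw$ one obtains $\int V\psi\,dx=\langle v,w\psi\rangle$ and $\int x_jV\psi\,dx=\langle x_jv,w\psi\rangle$, so $P_eV1=0\iff S_1v=0$ and $P_eVx=0\iff S_1(x_jv)=0$ for each $j=1,\ldots,n$. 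Since $D_1=S_1D_1S_1$, these conditions are equivalent to $D_1v=0$ and $D_1(x_jv)=0$, respectively.

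For the first statement, recall from \eqref{Gn-2 defn} that $G_{n-2}(x,y)=c_{n-2}$ is constant, so $vG_{n-2}v$ is the rank-one operator with kernel $c_{n-2}v(x)v(y)$. Hence $D_1vG_{n-2}v\,g=c_{n-2}(D_1v)\langle v,g\rangle$ and, by a transpose computation, $vG_{n-2}vD_1\,g=c_{n-2}v\langle D_1v,g\rangle$; under the first orthogonality condition both vanish identically. In particular the $\lambda^{n-6}$ coefficient $\pm iD_1vG_{n-2}vD_1$ in the expansion of $B_\pm(\lambda)^{-1}$ vanishes, and the first odd power is pushed up to $\lambda^{n-4}$, yielding the stated expansion through $\widetilde B_1^\pm(\lambda)$.

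For the second statement I would treat the five summands of $B_n$ separately. Four of them have $D_1vG_{n-2}v$ sitting on the far left or $vG_{n-2}vD_1$ sitting on the far right, and therefore vanish by the previous paragraph. That leaves only $-D_1vG_nvD_1$. Using \eqref{Gn defn}, write $G_n(x,y)=c_n(|x|^2-2x\cdot y+|y|^2)$; this decomposes $vG_nv$ as a finite sum of rank-one operators whose left and right factors are multiplication of $v$ by a polynomial of degree at most $2$. Sandwiching with $D_1$ leaves only factors of the form $D_1v$, $D_1(x_jv)$, or $D_1(|x|^2v)$. The first two vanish under the two hypotheses, while the third always appears paired with an outer $D_1v$ and therefore also vanishes. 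Thus $D_1vG_nvD_1=0$, so $B_n=0$ and the claimed expansion follows by substituting into Lemma~\ref{lem:Binv}.

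The main obstacle is pinning down the dictionary between the abstract operator conditions $P_eV1=0$, $P_eVx=0$ and the function-level annihilation $D_1v=0$, $D_1(x_jv)=0$; once that is in place, the rest is bookkeeping. This identification rests on the spectral characterization of $S_1$ developed in Section~\ref{sec:Spec}. A secondary subtlety is that in the hybrid terms $D_1vG_{n-2}vD_1C_4D_1$ and $D_1C_4D_1vG_{n-2}vD_1$ one must exploit the rank-one structure of $vG_{n-2}v$ directly, before trying to commute it past the adjacent absolutely bounded operators, so as to produce a literal zero rather than merely a small-norm remainder.
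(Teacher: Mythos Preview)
Your proof is correct and follows essentially the same strategy as the paper: translate the orthogonality hypotheses into operator-level vanishing and then check term-by-term that every odd-$\lambda$ coefficient in the Neumann expansion dies. The only cosmetic difference is the translation mechanism---the paper packages it as the identity $D_1=wP_ew$ (via \eqref{S1 trick} and Lemma~\ref{lem:eproj}) and then writes $D_1vG_{n-2}vD_1=wP_eVG_{n-2}VP_ew$, whereas you extract the slightly more elementary fact $S_1v=0$ (hence $D_1v=0$) directly from Lemma~\ref{S characterization} and kill the rank-one pieces; both arrive at the same vanishing.
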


\begin{proof}

	The first claim follows clearly from Lemma~\ref{lem:Binv} since the coefficient of
	$\lambda^{n-6}$ is a scalar multiple of the
	operator $P_eV1$.  This is seen through	
	the identities,
	\begin{align}\label{S1 trick}
		S_1=-S_1vG_0w=-wG_0vS_1.
	\end{align}
	Along with Lemma~\ref{lem:eproj} and the fact that
	$D_1=S_1D_1S_1$ we see that
	$$
		D_1=S_1D_1S_1=wG_0vS_1D_1S_1vG_0w=wP_{e}w
	$$
	with $P_e$ the projection onto the eigenspace at zero.
	Then
	$$
		D_1 vG_{n-2}vD_1=wP_{e}VG_{n-2}VP_{e}w=0.
	$$
	We recall that $G_{n-2}=c_{n-2}1$ in the last step.
	
	The second claim follows from the above observations,
	the form of $B_n$ found in Lemma~\ref{lem:Binv}
	and 
	\begin{align*}
	c_n^{-1}D_1vG_{n}vD_1&=wP_{e}V(|x|^2-2x\cdot y +|y|^2)VP_ew\\
	&=wP_eV|x|^2 1VP_ew-2wP_eVx\cdot yVP_ew
	+wP_eV1|y|^2VP_ew.
	\end{align*}

\end{proof}

We can now state several versions the expansions for
$M^\pm (\lambda)^{-1}$.  These different
expansions allow us to have finer control on the time decay rate of the error terms of the evolution given
in Theorem~\ref{thm:main} at the cost of more decay on
the potential.

\begin{prop}\label{prop:Minvfull}

	If $|V(x)|\les \la x\ra^{-n-8-}$, then
	\begin{align*}
		M^{\pm}(\lambda)^{-1}&=-\frac{D_1}{\lambda^2}
		+\sum_{j=0}^{\frac{n-7}{2}} \lambda^{2j}
		M_{2j}\pm i \lambda^{n-6}D_1vG_{n-2}vD_1
		+\lambda^{n-5}
		M_{n-5}\pm i\lambda^{n-4}M_{n-4}\\
		&+\lambda^{n-3}M_{n-3}
		\pm i\lambda^{n-2}M_{n-2}+
		\widetilde O_{\frac{n+1}{2}}(\lambda^{n-2+})
	\end{align*}
	with the operators
	$M_{k}$ all real-valued and absolutely bounded,
	provided that $\lambda$ is sufficiently small.

\end{prop}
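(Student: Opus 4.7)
The plan is to combine three already-established ingredients: Lemma~\ref{JNlemma} applied with $A=M^\pm(\lambda)$ and $S=S_1$, the expansion of $(M^\pm(\lambda)+S_1)^{-1}$ from Lemma~\ref{M+S inverse}, and the expansion of $B_\pm(\lambda)^{-1}$ from Lemma~\ref{lem:Binv}. Under the hypothesis $|V(x)|\les\la x\ra^{-n-8-}$ one has $v\les \la x\ra^{-\f{n}{2}-4-}$, so $\beta>\f{n}{2}+4+\ell$ for some small $\ell>0$ and each supporting lemma applies at its fullest precision, through the $\widetilde M_2^\pm$ and $\widetilde B_2^\pm$ remainders respectively.

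First I would invoke Lemma~\ref{JNlemma} to write
\begin{equation*}
M^\pm(\lambda)^{-1}=(M^\pm(\lambda)+S_1)^{-1}+(M^\pm(\lambda)+S_1)^{-1}\,S_1\,B_\pm(\lambda)^{-1}\,S_1\,(M^\pm(\lambda)+S_1)^{-1}.
\end{equation*}
Substituting the two series expansions into the right-hand side and multiplying out is essentially the whole argument. The identities $S_1D_0=D_0S_1=S_1$ and $S_1D_1S_1=D_1$ keep the algebra tractable: pairing the leading singularity $-D_1/\lambda^2$ of $B_\pm^{-1}$ with the $D_0$ term from each bracketing copy of $(M^\pm+S_1)^{-1}$ produces the $-D_1/\lambda^2$ term of the proposition, while the remaining pairings contribute terms of order $\lambda^{2j}$ or higher, which collect into the coefficients $M_{2j}$ for $0\le j\le \f{n-7}{2}$.

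The delicate step is tracking which pairings produce odd powers of $\lambda$ (those carrying $\pm i$) versus even ones. An odd power can only arise by pairing one odd-power term from one of the three factors with even-power terms from the other two. Because every odd-power operator in both underlying expansions carries an explicit $\pm$ sign, the resulting odd coefficients in $M^\pm(\lambda)^{-1}$ inherit those signs. The first such contribution is $\pm i\lambda^{n-6}D_1vG_{n-2}vD_1$, arising solely from the leading odd term $\pm i\lambda^{n-6}D_1vG_{n-2}vD_1$ of $B_\pm^{-1}$ bracketed by $D_0$ on each side. The higher odd coefficients $M_{n-4}$ and $M_{n-2}$ gather several such pairings, but their explicit form is not needed in the sequel; one only needs that they are absolutely bounded with real-valued kernels, which is inherited from $D_0$, $D_1$, and the operators $C_k$, $B_k$, $vG_j v$ via composition.

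The error estimate $\widetilde O_{(n+1)/2}(\lambda^{n-2+})$ follows by distributing the $\widetilde M_2^\pm$ and $\widetilde B_2^\pm$ remainders against the leading terms of the other factors and applying the Leibniz rule for derivatives up through order $\f{n+1}{2}$; the slowest-decaying contribution arises when $\widetilde B_2^\pm=\widetilde O_{(n+1)/2}(\lambda^{n-2+\ell})$ is sandwiched between two $D_0$'s, which sets the final error size. I expect the main obstacle to be not analytical but combinatorial: enumerating every pairing through power $\lambda^{n-2}$, certifying that each coefficient is absolutely bounded with a real kernel, and verifying that the $\pm$ signs are distributed as stated. No new analytic input beyond Lemmas~\ref{M+S inverse} and \ref{lem:Binv} is required.
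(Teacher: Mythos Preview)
Your proposal is correct and follows exactly the approach of the paper, which simply states that the result follows from the expansions in Lemmas~\ref{M+S inverse} and~\ref{lem:Binv} applied via the inversion Lemma~\ref{JNlemma}. Your account is in fact more detailed than the paper's one-sentence proof, and the combinatorial bookkeeping you outline (tracking odd versus even powers, using $S_1D_0=D_0S_1=S_1$, and identifying the error as coming from $\widetilde B_2^\pm$ sandwiched between $D_0$'s) is precisely what underlies that sentence.
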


\begin{proof}

This follows from the expansions in Lemmas~\ref{M+S inverse} and \ref{lem:Binv}, in the inversion lemma,
Lemma~\ref{JNlemma}.

\end{proof}

This expansion can be truncated, saving required decay
on the potential.  In particular, this longer expansion
can be used to prove Corollary~\ref{cor:ugly}.
We note the following useful
and immediate corollaries.

\begin{corollary}\label{lem:Minv}

	If $|V(x)|\les \la x\ra^{-n-}$, then
	\begin{align}\label{Minv:nocancweak}
		M^{\pm}(\lambda)^{-1}&=-\frac{D_1}{\lambda^2}
		+\sum_{j=0}^{\frac{n-7}{2}} \lambda^{2j}
		M_{2j}\pm i \lambda^{n-6}D_1vG_{n-2}vD_1
		+\widetilde O_{\frac{n-1}{2}}(\lambda^{n-6+}).
	\end{align}
	If $|V(x)|\les \la x\ra^{-n-4-}$, then
	\begin{align}\label{Minv:nocancstrong}
		M^{\pm}(\lambda)^{-1}&=-\frac{D_1}{\lambda^2}
		+\sum_{j=0}^{\frac{n-7}{2}} \lambda^{2j}
		M_{2j}\pm i \lambda^{n-6}D_1vG_{n-2}vD_1+
		\lambda^{n-5}M_{n-5}
		+\widetilde O_{\frac{n-1}{2}}(\lambda^{n-4}).
	\end{align}	

\end{corollary}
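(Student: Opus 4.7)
The proof applies Lemma~\ref{JNlemma} with $A=M^\pm(\lambda)$ and $S=S_1$, combined with the truncated expansions from Lemmas~\ref{M+S inverse} and \ref{lem:Binv}. The starting point is the inversion formula
$$
M^\pm(\lambda)^{-1} = (M^\pm(\lambda)+S_1)^{-1} + (M^\pm(\lambda)+S_1)^{-1} S_1 B_\pm(\lambda)^{-1} S_1 (M^\pm(\lambda)+S_1)^{-1}.
$$
For each of \eqref{Minv:nocancweak} and \eqref{Minv:nocancstrong} the plan is to substitute the expansions from these two lemmas at the level permitted by the stated decay on $V$, then multiply out, using the identities $S_1 D_0 = D_0 S_1 = S_1$ to collapse the outer factors flanking $B_\pm(\lambda)^{-1}$, together with $D_1 = S_1 D_1 S_1$ inside the $B^{-1}$ factor.

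For \eqref{Minv:nocancweak} under $|V|\les \la x\ra^{-n-}$, the weakest expansions from both lemmas are available (with $\ell$ chosen slightly positive). The leading $-D_1/\lambda^2$ singularity of $M^\pm(\lambda)^{-1}$ is inherited directly from the $-D_1/\lambda^2$ term of $B_\pm(\lambda)^{-1}$, which becomes $-D_1/\lambda^2$ after conjugation by $S_1$. Even powers of $\lambda$ up through $\lambda^{n-7}$ arise from compositions of the real-valued, absolutely bounded operators $D_0$, $C_{2j}$, and $B_{2j}$, and these group naturally into the real absolutely bounded coefficients $M_{2j}$. The first odd/imaginary contribution appears at $\lambda^{n-6}$, produced by the $\pm i\lambda^{n-6}D_1 v G_{n-2} v D_1$ term of $B_\pm(\lambda)^{-1}$, which survives flanking by $S_1 D_0 = S_1$ unchanged. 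All remaining cross-terms---those involving $\widetilde M_0^\pm$, $\widetilde B_0^\pm$, or the $\mp i\lambda^{n-2}D_0vG_{n-2}vD_0$ piece paired with $-D_1/\lambda^2$---yield errors of size $\widetilde O_{(n-1)/2}(\lambda^{n-6+})$ via the derivative counts already encoded in Lemma~\ref{lem:Mexp}. For \eqref{Minv:nocancstrong} under the stronger decay $|V|\les \la x\ra^{-n-4-}$, one expands $(M^\pm(\lambda)+S_1)^{-1}$ and $B_\pm(\lambda)^{-1}$ one level deeper using the second halves of Lemmas~\ref{M+S inverse} and \ref{lem:Binv}. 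The additional real-valued $\lambda^{n-5}$ contributions consolidate into the single absolutely bounded $M_{n-5}$, while the $\pm i \lambda^{n-4}$ pieces and further remainders collapse into $\widetilde O_{(n-1)/2}(\lambda^{n-4})$.

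The main obstacle is organizational rather than analytic. Every cross-product in the Neumann-type expansion of $M^\pm(\lambda)^{-1}$ must be tagged with its order in $\lambda$, its real or imaginary character (which tracks the sign in front of $G_{n-2}$, $G_n$, \ldots in the free resolvent), and its number of controlled derivatives, and one must verify that no imaginary contribution arises at any order below $\lambda^{n-6}$ (respectively, $\lambda^{n-4}$). None of the individual bounds is difficult in isolation; all that is required is an exhaustive enumeration of the finitely many terms produced at the truncation level dictated by the decay of $V$, after which the structural properties of $D_0$, $D_1$, and the $G_j$ guarantee that the surviving coefficients have the advertised form.
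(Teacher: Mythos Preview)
Your proposal is correct and follows essentially the same route as the paper: apply Lemma~\ref{JNlemma} with $A=M^\pm(\lambda)$, $S=S_1$, substitute the expansions from Lemmas~\ref{M+S inverse} and \ref{lem:Binv} truncated at the level the decay on $V$ permits (taking $\ell=0+$ for \eqref{Minv:nocancweak} and going one step further with $\ell=0$ for \eqref{Minv:nocancstrong}), and use $S_1D_0=D_0S_1=S_1$ and $D_1=S_1D_1S_1$ to collapse the resulting products. The paper's own proof is a two-sentence sketch of exactly this; your version simply spells out the bookkeeping that the paper leaves implicit.
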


\begin{proof}

The first claim
follows from the expansions in Lemmas~\ref{M+S inverse} and \ref{lem:Binv}, in the inversion lemma,
Lemma~\ref{JNlemma}, going out to the error terms
$M_0^{\pm}(\lambda)$ and $B_0^{\pm}(\lambda)$ respectively
using $\ell=0+$.

The second claim follows from going further in the
expansions in Lemmas~\ref{M+S inverse} and
\ref{lem:Binv}, reaching $M_1^{\pm}(\lambda)$ and $B_1^{\pm}(\lambda)$  respectively and taking $\ell=0$.

\end{proof}

\begin{corollary}\label{cor:Minv}

	If $P_eV1=0$ and
	$|V(x)|\les \la x\ra^{-n-4-}$, then
	\begin{align}\label{Minv PV1=0}
		M^{\pm}(\lambda)^{-1}&=-\frac{D_1}{\lambda^2}
		+\sum_{j=0}^{\frac{n-7}{2}} \lambda^{2j}
		M_{2j}
		+\lambda^{n-5}
		M_{n-5}\pm i\lambda^{n-4}M_{n-4}
		+\widetilde O_{\frac{n-1}{2}}(\lambda^{n-4+}). 
	\end{align}
	If $|V(x)|\les \la x \ra^{-n-8-}$, then
	\begin{align}\label{Minv PV1strong}
			M^{\pm}(\lambda)^{-1}&=-\frac{D_1}{\lambda^2}
			+\sum_{j=0}^{\frac{n-7}{2}} \lambda^{2j}
			M_{2j}+\lambda^{n-5}M_{n-5}\pm i\lambda^{n-4}M_{n-4}
			+\lambda^{n-3}M_{n-3}\\
			&\pm i\lambda^{n-2}M_{n-2}+
			\widetilde O_{\frac{n+1}{2}}(\lambda^{n-2+}).\nn
	\end{align}	
	If in addition, $P_eVx=0$, and
	$|V(x)|\les \la x\ra^{-n-8-}$, then
	\begin{align}
		M^{\pm}(\lambda)^{-1}&=-\frac{D_1}{\lambda^2}
		+\sum_{j=0}^{\frac{n-7}{2}} \lambda^{2j}
		M_{2j}+\lambda^{n-5}M_{n-5}
		+\lambda^{n-3}M_{n-3}\\
		&\pm i\lambda^{n-2}M_{n-2}+
		\widetilde O_{\frac{n+1}{2}}(\lambda^{n-2+}).\nn
	\end{align}

\end{corollary}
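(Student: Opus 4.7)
The plan is to apply the Jensen--Nenciu inversion formula (Lemma~\ref{JNlemma}) with $A = M^{\pm}(\lambda)$ and $S = S_1$, which yields
\[
  M^{\pm}(\lambda)^{-1} = (M^{\pm}(\lambda)+S_1)^{-1} + (M^{\pm}(\lambda)+S_1)^{-1}\,S_1\,B_{\pm}(\lambda)^{-1}\,S_1\,(M^{\pm}(\lambda)+S_1)^{-1}.
\]
The first factor is expanded by Lemma~\ref{M+S inverse} and the middle factor by Lemma~\ref{lem:Binv} together with its Corollary. Because $S_1 D_0 = D_0 S_1 = S_1$, substantial collapses occur; in particular the $-D_1/\lambda^2$ leading term of $B_{\pm}(\lambda)^{-1}$ propagates unchanged to produce the $-D_1/\lambda^2$ leading term of $M^{\pm}(\lambda)^{-1}$. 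The remaining terms are then collected by powers of $\lambda$ in the resulting Neumann product.

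For the first expansion \eqref{Minv PV1=0}, the hypothesis $P_eV1 = 0$ removes the $\pm i \lambda^{n-6}$ coefficient. Using $S_1 = -S_1 v G_0 w$ from \eqref{S1 trick} together with Lemma~\ref{lem:eproj} one has $D_1 = w P_e w$, hence
\[
  D_1\, v G_{n-2} v\, D_1 = c_{n-2}\, w P_e V\mathbf{1} V P_e w = 0,
\]
as recorded in the Corollary to Lemma~\ref{lem:Binv}. Consequently the first odd-power term in $M^{\pm}(\lambda)^{-1}$ shifts from $\lambda^{n-6}$ to $\lambda^{n-4}$. Carrying the expansions of $(M^\pm+S_1)^{-1}$ to $\widetilde M_1^\pm$ and of $B_\pm^{-1}$ to $\widetilde B_1^\pm$ requires $\beta > \tfrac{n}{2} + 2 + \ell$, so choosing $\ell = 0+$ accommodates the stated $|V(x)| \les \la x\ra^{-n-4-}$ hypothesis.

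For \eqref{Minv PV1strong} one pushes both factors one step further, to $\widetilde M_2^\pm$ and $\widetilde B_2^\pm$, at the cost of two additional powers of decay on $V$, which produces the $\la x\ra^{-n-8-}$ requirement. Under the additional hypothesis $P_eVx = 0$, the $\lambda^{n-4}$ coefficient $B_n$ of $B_{\pm}(\lambda)^{-1}$ also vanishes: using $D_1 = w P_e w$ and $G_n(x,y) = c_n(|x|^2 - 2 x \cdot y + |y|^2)$,
\[
  c_n^{-1}\, D_1\, v G_n v\, D_1 = w P_e V |x|^2 \mathbf{1} V P_e w - 2\, w P_e V x \cdot y V P_e w + w P_e V \mathbf{1} |y|^2 V P_e w,
\]
each summand of which factors through either $P_e V 1$ or $P_e V x$ and thus vanishes; the remaining summands in the explicit formula for $B_n$ from Lemma~\ref{lem:Binv} factor analogously. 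This removes the $\pm i \lambda^{n-4}$ term from the final expansion, yielding the third formula.

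The main bookkeeping challenge, and really the only genuine obstacle, is to verify that once the two leading odd-power coefficients in $B_{\pm}(\lambda)^{-1}$ are eliminated, no new imaginary-valued contributions at orders $\lambda^{n-6}$ or $\lambda^{n-4}$ emerge elsewhere in the Neumann product. This reduces to checking that every remaining cross-term at these orders either pairs real-valued absolutely bounded operators (the $D_0$, the $vG_{2j}v$ for $j \leq (n-3)/2$, and the $C_{2j}$) or contains a vanishing $P_e V 1$ or $P_e V x$ factor; the former is immediate from the explicit expansions, and the latter is precisely what the orthogonality hypotheses were designed to provide.
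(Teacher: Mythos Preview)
Your proof sketch is correct and follows the same route the paper takes: apply Lemma~\ref{JNlemma}, feed in the expansions from Lemma~\ref{M+S inverse} and Lemma~\ref{lem:Binv}, and invoke the Corollary to Lemma~\ref{lem:Binv} for the vanishing of the $\lambda^{n-6}$ and $\lambda^{n-4}$ coefficients in $B_\pm^{-1}$ under the orthogonality hypotheses. The paper treats this corollary as immediate from Proposition~\ref{prop:Minvfull} and the preceding corollary without writing out the cross-term bookkeeping you carry out in your last paragraph; your check that the $\lambda^{n-4}$ contributions arising from products such as $(\mp i\lambda^{n-2}D_0vG_{n-2}vD_0)S_1(-D_1/\lambda^2)S_1D_0$ also factor through $P_eV1$ is a useful explicit verification of what the paper leaves implicit.
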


\section{Dispersive estimates: the leading terms}\label{sec:disp}

In this section we use the expansions developed for
$M^{\pm}(\lambda)^{-1}$ in Section~\ref{sec:resolv} to
develop small $\lambda$ expansions for the operators
$R_V^\pm(\lambda^2)$ to determine the time decay rate
of the evolution $e^{itH}P_{ac}(H)$.
We divide our analysis into three cases that depend
on the eigenfunction cancellation.  Specifically, we
need to consider when $P_eV1\neq 0$, then when
$P_eV1=0$ but $P_eVx\neq 0$, and finally we
consider when both $P_eV1=0$ and $P_eVx=0$.

Iterating the standard resolvent identity
$$
	R_V^\pm(\lambda^2)=R_0^\pm(\lambda^2)
	-R_0^\pm(\lambda^2)VR_V^\pm(\lambda^2)
	=R_0^\pm(\lambda^2)
	-R_V^\pm(\lambda^2)VR_0^\pm(\lambda^2)
$$
we have the  identity
\begin{align}
	R_V^\pm(\lambda^2)&=\sum_{k=0}^{2m+1}(-1)^k
	R_0^\pm(\lambda^2)[VR_0^\pm(\lambda^2)]^k\label{eq:finitebs}\\
	&+[R_0^\pm(\lambda^2)V]^{m}R_0^\pm(\lambda^2)v
	M^{\pm}(\lambda)^{-1} 
	vR_0^\pm(\lambda^2)[VR_0^\pm(\lambda^2)]^{m}.\label{eq:bstail}
\end{align}
In light of Lemma~\ref{lem:iterated} the identity 
holds for $m+1\geq \frac{n-3}{4}$ and 
$|V(x)|\les \la x\ra^{-\frac{n+1}{2}-}$ as an identity
from $L^{2,\f12+}\to L^{2,-\f12-}$, as in the 
limiting absorption principle.

The most singular $\lambda$ terms of the expansion,
and hence the slowest decaying in time terms occur in the
last term involving the operator $M^{\pm}(\lambda)^{-1}$.
These are the terms whose time decay rate is sensitive to
the existence of zero-energy eigenvalues.
In this section we prove the dispersive bounds for
tail of the Born series.
Accordingly, we focus on the contribution of
\eqref{eq:bstail} and put the 
dispersive bounds for the finite Born series terms,
\eqref{eq:finitebs}, in Section~\ref{sec:finitebs}.
In particular, we note that Proposition~\ref{bsprop} ensures 
these terms contribute $|t|^{-\f n2}$ 
as an operator from $L^1$ to $L^\infty$ to the evolution
of $e^{itH}P_{ac}(H)$.

From the `+/-' cancellation, we need only concern
ourselves with the odd powers of $\lambda$ that 
arise in the expansion of
\begin{multline}\label{eq:nastydiff}
	(R_0^+(\lambda^2)V)^mR_0^+(\lambda^2) v
	M^+(\lambda)^{-1}v R_0^+(\lambda^2) (V
	R_0^+(\lambda^2))^m\\
	-(R_0^-(\lambda^2)V)^mR_0^-(\lambda^2) v
	M^-(\lambda)^{-1}v R_0^-(\lambda^2) (V
	R_0^-(\lambda^2))^m.
\end{multline}
Using the algebraic fact,
\begin{align}\label{alg fact}
	\prod_{k=0}^MA_k^+-\prod_{k=0}^M A_k^-
	=\sum_{\ell=0}^M \bigg(\prod_{k=0}^{\ell-1}A_k^-\bigg)
	\big(A_\ell^+-A_\ell^-\big)\bigg(
	\prod_{k=\ell+1}^M A_k^+\bigg),
\end{align}
We have two cases to consider, either the `+/-' 
difference in \eqref{alg fact} acts on a resolvent or on the operators
$M^{\pm}(\lambda)^{-1}$.

\subsection{No cancellation}
We first consider the case in which there is no
eigenfunction cancellation, that is when the operator
$P_eV1\neq 0$.  In this case,

\begin{lemma}\label{lem:nocanc}

	If $P_eV1\neq 0$ and $|V(x)|\les \la x \ra^{-n-4-}$, 
	then
	$$
		\eqref{eq:nastydiff}=c_{n-2}\lambda^{n-6} 
		P_eV1VP_e+\frac{E_0^+(\lambda)-E_0^-(\lambda)}
		{\lambda^2}VP_e+
		P_eV\frac{E_0^+(\lambda)-E_0^-(\lambda)}
				{\lambda^2}
		+\widetilde O_{\frac{n-1}{2}} (\lambda^{n-4}),
	$$
	which contributes  $C_n|t|^{2-\f n2}P_eV1VP_e +
	O(|t|^{1-\f n2})$ to \eqref{Stone}.

\end{lemma}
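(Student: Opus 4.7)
The plan is to substitute the expansion~\eqref{Minv:nocancstrong} of Corollary~\ref{lem:Minv} for $M^\pm(\lambda)^{-1}$ (valid since $\beta>n+4$) together with the free resolvent expansion of Lemma~\ref{lem:R0exp} into the tail-of-Born-series expression~\eqref{eq:bstail}, and to track the $\pm$-dependent contributions term by term using the algebraic identity~\eqref{alg fact}. Since the operators $G_{2j}$, $D_0$, $D_1$, and $M_{2j}$ all have real-valued kernels, the difference~\eqref{eq:nastydiff} only picks up contributions involving the purely imaginary entries $\pm i\lambda^{n-2}G_{n-2}$ of $R_0^\pm$ and $\pm i\lambda^{n-6}D_1vG_{n-2}vD_1$ of $M^\pm(\lambda)^{-1}$, together with the resolvent remainders $E_0^\pm(\lambda)$ and the $\widetilde O_{(n-1)/2}(\lambda^{n-4})$ error in~\eqref{Minv:nocancstrong}.

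The heart of the argument is to extract the $\lambda^{n-6}$ leading term from the $\pm i\lambda^{n-6}D_1vG_{n-2}vD_1$ entry of $M^\pm(\lambda)^{-1}$. Using $vw=wv=V$ and the identity $D_1=wP_ew$ (established in the corollary preceding Proposition~\ref{prop:Minvfull}), the inner factor becomes $VP_eVG_{n-2}VP_eV$. The outer $[R_0^\pm V]^m R_0^\pm$ factors are then reduced through the key identity $R_0^\pm VP_e=-P_e-\lambda^2 R_0^\pm P_e$, which holds because any eigenfunction $\psi\in\mathrm{Ran}\,P_e$ satisfies $V\psi=\Delta\psi$ and $R_0^\pm(-\Delta)=I+\lambda^2 R_0^\pm$. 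Iterating gives $[R_0^\pm V]^{m+1}P_e=(-1)^{m+1}P_e+\lambda^2 Q^\pm(\lambda)$, with the mirror statement for $P_e[VR_0^\pm]^{m+1}$. Combined with $G_{n-2}=c_{n-2}\cdot 1$, the $\pm$-difference at leading order becomes $2ic_{n-2}\lambda^{n-6}P_eV1VP_e$, producing the stated leading contribution after absorbing constants.

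The singular piece $-D_1/\lambda^2$ of $M^\pm(\lambda)^{-1}$ contributes $-\lambda^{-2}[R_0^\pm V]^{m+1}P_e[VR_0^\pm]^{m+1}$, which by the same iteration equals $-P_e/\lambda^2$ plus corrections of order $1$ and higher. The $-P_e/\lambda^2$ term is real and $\pm$-independent, hence cancels exactly in~\eqref{eq:nastydiff}. The residual $\pm$-dependence of $Q^\pm(\lambda)$ is inherited from the resolvent differences $R_0^+-R_0^-=2i\lambda^{n-2}G_{n-2}+(E_0^+-E_0^-)$; applying~\eqref{alg fact} inside each iterated product and collapsing the adjacent $[R_0^\pm V]^k P_e\approx(-1)^k P_e$ factors isolates $(E_0^+-E_0^-)/\lambda^2$ sandwiched between $P_e$ and $V$, producing exactly the error pieces $\frac{E_0^+(\lambda)-E_0^-(\lambda)}{\lambda^2}VP_e$ and $P_eV\frac{E_0^+(\lambda)-E_0^-(\lambda)}{\lambda^2}$ displayed in the lemma. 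The leftover $2i\lambda^{n-2}G_{n-2}/\lambda^2$ contributions, along with the real terms $\sum_j\lambda^{2j}M_{2j}$ paired against $\pm i\lambda^{n-2}G_{n-2}$, the $\lambda^{n-5}M_{n-5}$ term, and the $\widetilde O_{(n-1)/2}(\lambda^{n-4})$ error from~\eqref{Minv:nocancstrong}, all fit inside the remainder claimed in the lemma, controlled via Lemma~\ref{lem:Mexp} and Lemma~\ref{lem:iterated}.

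To translate the $\lambda$-expansion into time decay, Stone's formula~\eqref{Stone} is applied term by term. The substitution $\mu=t\lambda^2$ gives $\int_0^\infty e^{it\lambda^2}\lambda\chi(\lambda)\lambda^{n-6}\,d\lambda=C_n|t|^{2-n/2}$, producing the $|t|^{2-n/2}P_eV1VP_e$ leading contribution, while the $\widetilde O_{(n-1)/2}(\lambda^{n-4})$ remainder yields $O(|t|^{1-n/2})$ via repeated integration by parts against $e^{it\lambda^2}$, the $(n-1)/2$ symbol-type derivative bounds being just enough to control the stationary-phase integral. The main obstacle will be establishing the uniform $L^1\to L^\infty$ operator bound on the error pieces $\frac{E_0^+-E_0^-}{\lambda^2}VP_e$: although $P_e$ is finite rank and maps $L^1\to L^\infty$, the spatial weight $|x-y|^\ell$ carried by $E_0^\pm$ must be absorbed by the decay of $V$ and of the iterated resolvent kernels produced by the chain of reductions, which is precisely where the hypothesis $\beta>n+4$ is consumed through Lemmas~\ref{lem:R0exp} and~\ref{lem:iterated}.
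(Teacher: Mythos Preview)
Your argument is correct and arrives at the same conclusion as the paper, but the collapse of the iterated resolvents flanking $D_1$ is organized differently.  The paper replaces every inner $R_0^\pm$ by $G_0$ (pushing the $\widetilde O(\lambda^2)$ corrections into the error) and then invokes the projection identity $S_1=-S_1vG_0w=-wG_0vS_1$ to reduce $(G_0V)^mG_0vD_1vG_0(VG_0)^m$ directly to $P_e$.  You instead keep the full $R_0^\pm$ and use the eigenvalue relation $R_0^\pm V\psi=-\psi-\lambda^2 R_0^\pm\psi$, iterating to obtain $[R_0^\pm V]^{m+1}P_e=(-1)^{m+1}P_e+\lambda^2 Q^\pm(\lambda)$.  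Both reductions are equivalent at order $\lambda^0$ (indeed $G_0V\psi=-\psi$ is exactly your identity at $\lambda=0$), and both place the $\lambda^2$ corrections into the $\widetilde O_{\frac{n-1}{2}}(\lambda^{n-4})$ remainder.  Your formulation has the advantage of tracking the $\lambda$-dependence of the correction $Q^\pm$ explicitly; the paper's formulation avoids having to justify that objects like $R_0^\pm P_e$ (which live inside $Q^\pm$) are well-controlled, since it works only with $G_0$ and absolutely bounded operators throughout.

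Two small points.  First, the substitution $\mu=t\lambda^2$ does not literally give $\int_0^\infty e^{it\lambda^2}\lambda\chi(\lambda)\lambda^{n-6}\,d\lambda=C_n|t|^{2-\frac n2}$ because of the cutoff; the paper appeals to Lemma~\ref{lem:IBP} instead, which yields the stated bound together with a faster-decaying correction.  Second, your attribution of the hypothesis $\beta>n+4$ to the $\frac{E_0^\pm}{\lambda^2}VP_e$ piece is off: that term only needs $V\in L^1$ and $P_e:L^1\to L^\infty$ (Corollary~\ref{Pemapping}).  The extra decay $\beta>n+4$ is consumed earlier, in the expansion~\eqref{Minv:nocancstrong} of $(M^\pm+S_1)^{-1}$, which requires $v(x)\les\la x\ra^{-\frac n2-2-}$ via Lemma~\ref{lem:Mexp}.
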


\begin{proof}

We first consider when the `+/-' difference acts on
free resolvent operators $R_0^\pm$.
For the other resolvents, by Lemma~\ref{lem:R0exp}, we can write
$$
	R_0^\pm(\lambda^2)=G_0+\widetilde O_{\frac{n-1}{2}}(\lambda^2)
$$
and by Corollary~\ref{lem:Minv},
$$
	M^{\pm}(\lambda)^{-1}=-\frac{D_1}{\lambda^2}+
	\widetilde O_{\frac{n-1}{2}}(1).
$$
Recall that the error terms here are to indicate that
differentiation in $\lambda$ is comparable to division
by $\lambda$, up to order $\frac{n-1}{2}$.  

For the purposes of making sure that the $O(|t|^{1-\f n2})$ remainder maps between
unweighted spaces, the most delicate
term will be of the form
\begin{align}\label{eq:delicate}
	[R_0^+(\lambda^2)-R_0^-(\lambda^2)]\bigg\{(VG_0)^m v
	\frac{D_1}{\lambda^2}v (G_0V)^mG_0+\widetilde O_{\frac{n-1}{2}}(1)\bigg\}
\end{align}
This occurs when the `+/-' difference in \eqref{eq:nastydiff} affects the leading free resolvents
and using the expansions above for the remaining resolvents
and $M^{\pm}(\lambda)^{-1}$.

Then using the identity $S_1=-S_1vG_0w=-wG_0vS_1$,
we have $G_0VG_0vD_1=G_0vwG_0vS_1D_1=-G_0vD_1$
and similarly $D_1vG_0VG_0=-D_1vG_0$.  
\begin{align*}
	V(G_0V)^{m-1}G_0 vD_1vG_0(VG_0)^m
	=-VG_0vD_1vG_0
	=-VP_e.
\end{align*}
Where we used
the definition of $P_e$, \eqref{Pe defn}, in the
last step.  Now, using
the expansion for $R_0^\pm(\lambda^2)$ in
Lemma~\ref{lem:R0exp} with $\ell=0$, we need only bound 
the contribution of
\begin{align*}
	\frac{E_0^\pm(\lambda)}{\lambda^2}VP_e+\widetilde O_{\frac{n-1}{2}}(\lambda^{n-2}).
\end{align*}
By Lemma~\ref{lem:R0exp} with $\ell=0$, $E_0^\pm (\lambda)=\widetilde O_{\frac{n-1}{2}}(\lambda^{n-2})$, thus
the first term is of size $\widetilde O_{\frac{n-1}{2}}
(\lambda^{n-4})$, which
by Lemma~\ref{lem:fauxIBP}, 
we can bound the contribution
of the above term to \eqref{Stone} by
$|t|^{1-\frac{n}{2}}$.  
The error term is an operator from $L^1\to L^\infty$ since we can bound it by a  constant independent of $x$ and $y$, with the intermediate $z_j$ integrals are controlled
by Lemma~\ref{lem:iterated} and the fact that the terms
in the expansion of $M^\pm(\lambda)^{-1}$ are absolutely bounded on $L^2$.  The first term can similarly be seen to map $L^1$ to $L^\infty$ since the decay of $V$ and the
bounds on $E_0^\pm$ in Lemma~\ref{lem:R0exp} ensure
that $VE_0^\pm(\lambda)$ is in $L^1$, then
$P_e:L^1\to L^\infty$ by Corollary~\ref{Pemapping}.
There is, of course, a similar
term where the difference of resolvents occurs at the
lagging resolvent, this contributes terms of the 
form
\begin{align*}
	P_eV\frac{E_0^\pm(\lambda)}{\lambda^2}+\widetilde O_{\frac{n-1}{2}}(\lambda^{n-2}),
\end{align*}
which are controlled similarly.

The other terms in which the difference of resolvents
occurs with `inner resolvents' are similar. 
In fact, we note that if we have the difference on an
`inner resolvent' as in
\begin{align*}
	(R_0^-V)^j&[R_0^+(\lambda^2)-R_0^-(\lambda^2)]
	(VR_0^+)^{m-j} v
	\frac{D_1}{\lambda^2}v (R_0^+V)^mR_0^+\\
	&=(G_0V)^j[2i \lambda^{n-4}G_{n-2} +E_1^+(\lambda)
	-E_1^-(\lambda)]
	(VG_0)^{m-j} vD_1v (G_0V)^mG_0+\widetilde O_{\frac{n-1}{2}}(\lambda^{n-2}),
\end{align*}
where we used Lemma~\ref{lem:R0exp}.  From the bounds on
the error terms $E_1^{\pm}(\lambda)$
we can integrate by parts safely as the $|z_j-z_{j+1}|$ powers
can be absorbed by the decay of the potentials that
are on either side.

The most singular term with respect to $\lambda$ occurs from the first
odd power of $\lambda$  that appears in the expansion
of $M^{\pm}(\lambda)$.  In this case, 
we need to bound
\begin{align*}
	(R_0^-(\lambda^2)V)^mR_0^-(\lambda^2) v[M^+(\lambda)^{-1}-M^-(\lambda)^{-1}]
	vR_0^+(\lambda^2)(VR_0^+(\lambda^2))^m.
\end{align*}
By Lemma~\ref{lem:R0exp} and Corollary~\ref{lem:Minv}
this contributes
$$
	\lambda^{n-6} (G_0V)^mG_0 vD_1vG_{n-2}vD_1		
	vG_0(VG_0)^m+\widetilde O_{\frac{n-1}{2}}(\lambda^{n-4}).
$$
The remaining terms, which we put in the error term, are all of size $\lambda^{n-4}$ or
smaller by \eqref{Minv:nocancstrong} and Lemma~\ref{lem:R0exp}.  
The operator with $\lambda^{n-6}$ can be seen to be a
multiple of $P_eV1VP_e$ similar to the analysis 
for \eqref{eq:delicate}, we have
\begin{multline*}
	(G_0V)^mG_0 vD_1vG_{n-2}vD_1vG_0(VG_0)^m
	=G_0vD_1vG_{n-2}vD_1vG_0\\
	=G_0vD_1v G_0wv G_{n-2}
	vwG_0vD_1vG_0 = P_eVG_{n-2}VP_e=c_{n-2}P_eV1VP_e.
\end{multline*}
Where we used \eqref{Pe defn} in the second to last
equality.
By Lemma~\ref{lem:IBP} we can bound the 
contribution of this to the Stone
formula \eqref{Stone} with
$C_n|t|^{2-\frac{n}{2}}P_eV1VP_e+O(|t|^{1-\f n2})$.  

\end{proof}

The remaining terms in the Born series, i.e. those derived from~\eqref{eq:finitebs},
have more rapid decay for large $|t|$ by Proposition~\ref{bsprop}.
In fact using the identities for $S_1$ and
Lemma~\ref{lem:eproj}, at this point we can write
\begin{align}
	e^{itH}P_{ac}(H)=C_n|t|^{2-\f n2} P_eV
	1VP_e+O(|t|^{1-\f n2}),
\end{align}
where the operator $P_eV1VP_e$ is rank one.  The weaker
claim, with error term of size $o(|t|^{2-\f n2})$ follows
by using \eqref{Minv:nocancweak} in place of
\eqref{Minv:nocancstrong} to obtain error terms of size
$\widetilde O_{\frac{n-1}{2}}(\lambda^{n-6+})$.

\subsection{The case of $P_eV1=0$}

	In Theorem~\ref{thm:main}, we have that if the
	operator $P_eV1=0$, one can achieve faster time
	decay.  In particular,

\begin{lemma}\label{lem:1canc}

	If $P_eV1= 0$ and $|V(x)|\les \la x \ra^{-n-8-}$, 
	then
	$$
		\eqref{eq:nastydiff}=\lambda^{n-4} 
		\Gamma_1+\frac{E_1^+(\lambda)-E_1^-(\lambda)}
		{\lambda^2}VP_e+
		P_eV\frac{E_1^+(\lambda)-E_1^-(\lambda)}
				{\lambda^2}
		+\la x \ra \la y \ra 
		\widetilde O_{\frac{n+1}{2}} (\lambda^{n-2}),
	$$ 
	with $\Gamma_1:L^1\to L^\infty$.
	This contributes  $|t|^{1-\f n2} +
	\la x \ra \la y \ra O(|t|^{-\f n2})$ to \eqref{Stone}.

\end{lemma}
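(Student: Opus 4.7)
The plan is to adapt the argument of Lemma~\ref{lem:nocanc} by invoking the strengthened expansion in Corollary~\ref{cor:Minv}, equation~\eqref{Minv PV1strong}, which shows that under $P_eV1=0$ and $|V(x)|\les\la x\ra^{-n-8-}$,
\begin{align*}
M^+(\lambda)^{-1}-M^-(\lambda)^{-1}=2i\lambda^{n-4}M_{n-4}+2i\lambda^{n-2}M_{n-2}+\widetilde O_{\frac{n+1}{2}}(\lambda^{n-2+}).
\end{align*}
The $\lambda^{n-6}$ term that drove Lemma~\ref{lem:nocanc} has vanished, since $D_1vG_{n-2}vD_1=c_{n-2}wP_eV1VP_ew=0$. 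As before, decompose \eqref{eq:nastydiff} via the algebraic identity \eqref{alg fact} into (i) contributions in which the $\pm$ difference falls on $M^\pm(\lambda)^{-1}$, and (ii) contributions in which it falls on a free resolvent.

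For (i), approximate the outer resolvents by $R_0^\pm(\lambda^2)=G_0+\widetilde O_{\frac{n-1}{2}}(\lambda^2)$ and apply the absorption identities $(G_0V)^mG_0vD_1=(-1)^mG_0vD_1$ together with their adjoints. Since $M_{n-4}$ arises through Lemma~\ref{JNlemma} with $D_1$ on both sides (coming from $B_n$ in Lemma~\ref{lem:Binv}), the leading $\lambda^{n-4}$ contribution assembles into a finite rank operator $\Gamma_1$ that factors through $S_1$ and the zero-energy eigenspace. Since every zero-energy eigenfunction is in $L^\infty$ (cf.~Corollary~\ref{Pemapping}), $\Gamma_1\colon L^1\to L^\infty$. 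Cross-terms combining the $\widetilde O(\lambda^2)$ remainder of an outer resolvent with $\lambda^{n-4}M_{n-4}$, or with any of the subleading $M^\pm$ corrections, carry factors $|x-y|^{\ell}$ that split symmetrically as $\la x\ra\la y\ra$, contributing to the $\la x\ra\la y\ra\widetilde O_{\frac{n+1}{2}}(\lambda^{n-2})$ remainder.

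For (ii), the inner absorption together with the leading $-D_1/\lambda^2$ piece of $M^\pm(\lambda)^{-1}$ collapses the chain of inner factors to $-VP_e/\lambda^2$ on the right side (or $-P_eV/\lambda^2$ on the left) of the isolated $R_0^+-R_0^-$. Expanding $R_0^+(\lambda^2)-R_0^-(\lambda^2)=2i\lambda^{n-2}G_{n-2}+(E_1^+-E_1^-)(\lambda)$ (the $\lambda^{n-1}G_{n-1}$ term is real and cancels in the difference), the leading piece $2ic_{n-2}\lambda^{n-4}\cdot 1\cdot VP_e$ vanishes because $P_eV1=0$ is equivalent to $1\cdot V\cdot P_e=0$. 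What survives is precisely $(E_1^+-E_1^-)VP_e/\lambda^2$ and its symmetric analogue $P_eV(E_1^+-E_1^-)/\lambda^2$; contributions of subleading $M^\pm$ corrections at the inner position fold into the $\la x\ra\la y\ra\widetilde O_{\frac{n+1}{2}}(\lambda^{n-2})$ remainder.

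The contribution to \eqref{Stone} then follows from the oscillatory integral estimates of Lemmas~\ref{lem:IBP} and \ref{lem:fauxIBP}: $\lambda^{n-4}\Gamma_1$ yields $|t|^{1-n/2}$ unweighted, the $\la x\ra\la y\ra\widetilde O(\lambda^{n-2})$ term yields the weighted $\la x\ra\la y\ra O(|t|^{-n/2})$ remainder, and the $(E_1^+-E_1^-)VP_e/\lambda^2$ pieces contribute at worst $|t|^{-(n-1)/2}$, absorbed into $O(|t|^{1-n/2})$. The main obstacle is ensuring that $(E_1^+-E_1^-)VP_e/\lambda^2$ gives an \emph{unweighted} $L^1\to L^\infty$ bound in spite of the $|x-z|^{1+\ell}$ growth built into $E_1^\pm$. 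This is resolved by using the orthogonality once more: for each eigenfunction $\psi_k$ in the range of $P_e$,
\begin{align*}
\int V(z)\psi_k(z)|x-z|\,dz=\int V(z)\psi_k(z)\bigl(|x-z|-|x|\bigr)\,dz,
\end{align*}
since $\int V(z)\psi_k(z)\,dz=0$; the right-hand integrand is bounded by $|z||V(z)||\psi_k(z)|$, which is uniformly integrable under the decay assumption on $V$, so the resulting kernel is bounded uniformly in $x$. The analogue on the other side uses $1\cdot V\cdot P_e=0$, completing the unweighted mapping estimate.
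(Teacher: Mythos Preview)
Your proof follows essentially the same decomposition and argument as the paper's: the algebraic splitting via \eqref{alg fact}, the use of \eqref{Minv PV1strong} to see the leading $\lambda^{n-4}M_{n-4}$ term after the $\lambda^{n-6}$ coefficient vanishes, the collapse of the $G_0$-chains onto $P_e$ via $S_1=-wG_0vS_1$, and the cancellation of the $G_{n-2}$ contribution in case (ii) because $1VP_e=0$. This is exactly what the paper does.

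The one place you diverge is in your final paragraph, where you try to show that $(E_1^+-E_1^-)VP_e/\lambda^2$ contributes \emph{unweighted} at order $|t|^{-(n-1)/2}$ by subtracting $|x|$ from $|x-z|$ using the moment condition. The paper does \emph{not} do this here: it simply takes $\ell=1$ in Lemma~\ref{lem:R0exp} to obtain the weighted bound $\la x\ra^2\,|t|^{-n/2}$ and explicitly postpones the sharper unweighted analysis to Lemmas~\ref{lem:Tlow} and~\ref{lem:Thigh}. Your subtraction idea is precisely the mechanism used there, but as written it is incomplete: you illustrate it only for the single function $|x-z|$, whereas $E_1^\pm(\lambda)(x,z)$ is a genuine $\lambda$-dependent remainder (of $K_+(\lambda|x-z|)$ type), and one must check that the mean-value bound survives all the $\lambda$-derivatives required by Lemma~\ref{lem:fauxIBP}. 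That is the content of Lemmas~\ref{lem:Tlow}--\ref{lem:Thigh}, and even there the conclusion under $P_eV1=0$ alone is only $L^1\to L^{\infty,-1}$, not fully unweighted. Since the lemma as stated singles out the $E_1^\pm$ terms and only claims the weighted $\la x\ra\la y\ra\,O(|t|^{-n/2})$ remainder, this extra step is not needed for the present proof; you may simply invoke the $\ell=1$ bound on $E_1^\pm$ as the paper does.
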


We note that the terms with $\lambda^{-2}E_1^{\pm}(\lambda)$
generically demand polynomial weights of power
two in the spatial
variables $x$ and $y$.  These weights appear, for example, in the
properties of the operator $A_0(t)$ in~\eqref{eq:corollary}.
We prove the more elementary generic bounds here and postpone
a full account of these terms until
Lemmas~\ref{lem:Tlow} and \ref{lem:Thigh} below.
In a similar vein, the error term contribution is clearly bounded by
$\la x\ra \la y\ra|t|^{-\f n2}$ via Lemma~\ref{lem:fauxIBP}.
However Lemma~\ref{lem:BS} will demonstrate how to
bound terms of this type by $|t|^{-\f n2}$ without weights,
by shifting the stationary phase point if all $\frac{n+1}{2}$
derivatives fall on the leading (or lagging) resolvent.
The sharper error estimate is not crucial here but it will be needed
momentarily.

\begin{proof}

In this case, the `+/-' cancellation on
$M^{\pm}(\lambda)^{-1}$ yields a first term of size
$\lambda^{n-4}$.  Specifically, from Corollary~\ref{cor:Minv} using \eqref{Minv PV1strong},
we have
$$
	M^+(\lambda)^{-1}-M^-(\lambda)^{-1}
	=2i\lambda^{n-4}M_{n-4}+2i \lambda^{n-2}M_{n-2}
	+\widetilde O_{\frac{n+1}{2}}(\lambda^{n-2+}) 
$$
As in the previous case, 
we can use
Lemma~\ref{lem:R0exp} to write 
$R_0^{\pm}(\lambda^2)=G_0+\lambda^2 G_2+|x-\cdot|
\widetilde O_{\frac{n+1}{2}} (\lambda^{4})$, and the most singular
expression in $\lambda$ occurs when the free resolvent is approximated by $G_0$
in each instance.
Thus we have
\begin{align*}
	(R_0^-(\lambda^2)V)^m&R_0^-(\lambda^2)v
	[M^+(\lambda)^{-1}-M^-(\lambda)^{-1}]
	vR_0^+(\lambda^2)
	(VR_0^-(\lambda^2)) ^m\\
	&=\lambda^{n-4}(G_0V)^mG_0vM_{n-4}vG_0(VG_0)^m
	+\lambda^{n-2}\Gamma_{n-2}
	+\la x \ra \la y \ra 
	\widetilde O_{\frac{n+1}{2}}(\lambda^{n-2+})
\end{align*}
Where $\Gamma_{n-2}$ is the contribution of
the operator $M_{n-2}$ with all $G_0$s and the
contribution of $M_{n-4}$ with $G_0$s and 
exactly one $G_2$.
This contributes $|t|^{1-\f n2}+\la x \ra \la y \ra 
O(|t|^{-\f n2})$ to the
Stone formula, \eqref{Stone}, by Lemmas~\ref{lem:IBP} and~\ref{lem:fauxIBP} respectively.

On the other hand, if the `+/-' cancellation affects 
an outer resolvent
we have to consider the contribution
of terms of the form
\begin{align*}
	[R_0^+(\lambda^2)-&R_0^-(\lambda^2)](VR_0^+(\lambda^2))^m v
	\frac{D_1}{\lambda^2}v (R_0^+(\lambda^2)V)^mR_0^+(\lambda^2)\\
	&=(2i\lambda^{n-2}G_{n-2}+E_1^+(\lambda)
	-E_1^-(\lambda))(VG_0)^m v
	\frac{D_1}{\lambda^2}v (G_0V)^mG_0
	+\la x\ra \la y\ra \widetilde O_{\frac{n+1}{2}}(\lambda^{n-2}).
\end{align*}
We note that for the remaining terms, we use
Corollary~\ref{cor:Minv} to write
$M^{\pm}(\lambda)^{-1}=-D_1/\lambda^2+ M_0+\widetilde{O}_{\frac{n+1}{2}}(\lambda^2)$.  Noting that
$\widetilde O_k(\lambda^j)=\widetilde O_k(\lambda^{\ell})$
for $\ell\leq j$, 
we can bound all of the terms as we do the first
most singular one.
The first term with the operator $G_{n-2}$ vanishes
since $P_eV1=0$.  Thus, we are left to bound
$$
	\frac{E_1^{\pm}(\lambda)}{\lambda^2} VP_e.
$$
Using the bounds in Lemma~\ref{lem:R0exp}, this is 
of size $|x-\cdot|^{1+\ell} \widetilde O_{\frac{n+1}{2}}(\lambda^{n-3+\ell})$.  Taking $\ell=1$
in Lemma~\ref{lem:fauxIBP} shows that
the contribution of this term is of size
$\la x\ra^2 |t|^{-\f n2 }$ for large $|t|$.
As in Lemma~\ref{lem:nocanc}, terms where the +/- difference acts on an
inner resolvent it can be bounded similarly.  Moreover the factor of $|z_j- z_{j+1}|^{2}$ in
the bound for $E_1^\pm(\lambda)$ is then negated by multiplying by the potential on both sides.

There are only a finite number of
terms in the expansion of finite rank operators
that contribute $\lambda^{n-4}$ to the expansion.
By Lemma~\ref{lem:IBP}, 
these contribute
$|t|^{1-\f n2}$ to the Stone formula.  

\end{proof}

The remaining claims in the second part of
Theorem~\ref{thm:main} come from truncating the
expansion for $R_0^\pm(\lambda^2)$ at $E_0^\pm(\lambda)$
with $\ell=0$ and $\ell=0+$ respectively.

Hence we have if $P_eV1=0$
\begin{align*}
	e^{itH}P_{ac}(H)=|t|^{1-\f n2}\Gamma+O(|t|^{-\f n2})
\end{align*}
where $\Gamma$ is a finite rank operator mapping
$L^1$ to $L^\infty$, which we
do not make explicit and the error term is understood
as an operator between weighted spaces.  Combining this
with the analysis for when $P_eV1\neq 0$, we have the
expansion
\begin{align*}
	e^{itH}P_{ac}(H)=C_n|t|^{2-\f n2}P_eV1VP_e+
	|t|^{1-\f n2}\Gamma+O(|t|^{-\f n2}),
\end{align*}
which is valid whether or not $P_eV1=0$.

\subsection{The case of $P_eV1=0$ and
$P_eVx=0$}

Finally we consider the case with both cancellation
conditions on the eigenfunctions at zero energy.  In
particular, we show that

\begin{lemma}\label{lem:2canc}

	If $P_eV1= 0$, $P_eVx=0$ 
	and $|V(x)|\les \la x \ra^{-n-8-}$, 
	then
	$$
		\eqref{eq:nastydiff}=\frac{R_0^+(\lambda)-R_0^-(\lambda)}
		{\lambda^2}VP_e
		+P_eV\frac{R_0^+(\lambda)-R_0^-(\lambda)}
				{\lambda^2}
		+\mathcal E(\lambda),
	$$ 
	which contributes  $|t|^{-\f n2}$ to \eqref{Stone}.

\end{lemma}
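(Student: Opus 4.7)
The plan is to follow the template set by Lemmas~\ref{lem:nocanc} and \ref{lem:1canc}, expanding \eqref{eq:nastydiff} via the algebraic identity \eqref{alg fact} into a sum of terms indexed by the position at which the `+/-' difference is taken. Three categories arise: (i) the difference falls on $M^{\pm}(\lambda)^{-1}$; (ii) it falls on the leading or lagging free resolvent, producing the boundary terms that appear in the statement; and (iii) it falls on an inner resolvent, which will be treated as in the earlier lemmas using the decay of the surrounding potentials to absorb the spatial growth factors in $E_j^{\pm}(\lambda)$.

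For category (i) the essential new input is that with both orthogonality conditions in force, Corollary~\ref{cor:Minv} yields
\begin{align*}
M^+(\lambda)^{-1}-M^-(\lambda)^{-1}=2i\lambda^{n-2}M_{n-2}+\widetilde O_{\frac{n+1}{2}}(\lambda^{n-2+}),
\end{align*}
since the potentially offending $\lambda^{n-6}$ coefficient $D_1vG_{n-2}vD_1=wP_eVG_{n-2}VP_ew$ vanishes under $P_eV1=0$, and the $\lambda^{n-4}$ coefficient $B_n$ vanishes when additionally $P_eVx=0$. Combined with the approximation $R_0^{\pm}(\lambda^2)=G_0+\widetilde O(\lambda^2)$ on the surrounding free resolvents, this contributes $\lambda^{n-2}$ times an absolutely bounded, in fact finite-rank, operator, which Lemma~\ref{lem:IBP} converts to an $|t|^{-n/2}$ bound on \eqref{Stone}.

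For category (ii), using the leading approximations $M^{\pm}(\lambda)^{-1}\approx -D_1/\lambda^2$ and $R_0^{\pm}\approx G_0$ on the factors that do not carry the `+/-' difference, together with the telescoping identity $(VG_0)^mvD_1vG_0(VG_0)^m=-VP_e$ extracted from the proof of Lemma~\ref{lem:nocanc}, we immediately produce the boundary term $\frac{R_0^+-R_0^-}{\lambda^2}VP_e$ and its mirror image $P_eV\frac{R_0^+-R_0^-}{\lambda^2}$. The correction terms are generated either by higher-order pieces of $M^{\pm}(\lambda)^{-1}$ or by higher-order Taylor pieces $\lambda^{2j}G_{2j}$, $\lambda^{n-2}G_{n-2}$, $\lambda^nG_n,\ldots$ on one of the inner resolvents. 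Under our orthogonality hypotheses the operators $G_{n-2}VP_e$ and the $|x|^2$ and $x\cdot y$ components of $G_nVP_e$ vanish, so these corrections are of size $\lambda^{n-2}$ times a finite-rank operator, and are absorbed into $\mathcal{E}(\lambda)$ with an $|t|^{-n/2}$ bound via Lemma~\ref{lem:IBP} or Lemma~\ref{lem:fauxIBP}. Category (iii) is handled exactly as in Lemma~\ref{lem:nocanc}: the difference of inner resolvents is expanded using Lemma~\ref{lem:R0exp}, and the factors $|z_j-z_{j+1}|^k$ in the error bounds for $E_j^{\pm}$ are absorbed by the potentials on either side.

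The main obstacle is not the algebraic bookkeeping above but establishing that the two boundary terms themselves contribute $|t|^{-n/2}$ to \eqref{Stone} uniformly in $x,y$ without polynomial weights. The factor $1/\lambda^2$ is too singular to treat by direct integration by parts as in Lemma~\ref{lem:IBP}, since $VP_e$ is not, a priori, enough times differentiable in $\lambda$ to cancel it. The resolution is to use the orthogonality conditions $P_eV1=0$ and $P_eVx=0$ to transfer two derivatives onto $VP_e$, effectively cancelling the $\lambda^{-2}$ prefactor within the relevant Taylor components of $R_0^{\pm}(\lambda^2)$; this is precisely the content of Lemmas~\ref{lem:Tlow} and \ref{lem:Thigh} below, which split the boundary integrals into low- and high-frequency regimes and produce enough smoothness in $\lambda$ to run a stationary phase argument. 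Once those two lemmas are in hand, the decomposition above shows $\mathcal{E}(\lambda)$ contributes $|t|^{-n/2}$, completing the proof.
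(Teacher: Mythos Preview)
Your overall decomposition into categories (i)--(iii) is correct and matches the paper's proof, and you correctly identify the boundary terms $\frac{R_0^+-R_0^-}{\lambda^2}VP_e$ as requiring the special treatment of Lemmas~\ref{lem:Tlow} and~\ref{lem:Thigh}. However, there is a gap in your handling of $\mathcal E(\lambda)$.

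In categories (i) and (iii) you assert that the error contributes $|t|^{-n/2}$ via Lemma~\ref{lem:IBP} or Lemma~\ref{lem:fauxIBP}, with the spatial factors from $E_j^\pm$ absorbed by surrounding potentials. This works for the \emph{inner} resolvents, but to reach $|t|^{-n/2}$ you must control $\frac{n+1}{2}$ derivatives in $\lambda$, and when all of these fall on the \emph{leading or lagging} free resolvent the bound in Lemma~\ref{lem:R0exp} produces factors of $|x-z_1|$ or $|z_k-y|$ that cannot be absorbed by any potential. Lemma~\ref{lem:fauxIBP} therefore only gives a weighted estimate for these terms. Your reference ``handled exactly as in Lemma~\ref{lem:nocanc}'' for category (iii) is inadequate: that lemma targets $|t|^{1-n/2}$ and only requires $\frac{n-1}{2}$ derivatives, where the weight issue does not arise.

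The paper addresses this explicitly: for the error terms $\mathcal E(\lambda)$ in categories (i) and (iii) it notes that the only obstruction to $\mathcal E(\lambda)=\widetilde O_{\frac{n+1}{2}}(\lambda^{n-2})$ is exactly this case of $\frac{n+1}{2}$ derivatives on an outer resolvent, and it handles it by the shifted-stationary-phase argument of Lemma~\ref{lem:statphase}, following the model of the Born series estimate Lemma~\ref{lem:BS} (using the identity $\lambda(\frac{1}{\lambda}\frac{d}{d\lambda})^{\frac{n-1}{2}}R_0^\pm(\lambda^2)(x,z_1)=C\,e^{\pm i\lambda|x-z_1|}$ and then Lemma~\ref{lem:statphase} with phase center $\lambda_0=-|x|/2t$). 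You should invoke Lemma~\ref{lem:BS} or Lemma~\ref{lem:statphase} here rather than Lemma~\ref{lem:IBP}; the technique you reserve for the boundary terms is in fact needed throughout $\mathcal E(\lambda)$.
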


It would be convenient if we could write
$\mathcal E(\lambda)=\widetilde O_{\frac{n+1}{2}} (\lambda^{n-2})$, and this is nearly true.  The issue is that
one cannot  bound  
$(\partial_\lambda)^{\frac{n+1}{2}} E_0^\pm(\lambda)$ as in Lemma~\ref{lem:R0exp}
without introducing spatial weights for large values of $\lambda|x-y|$.
So long as that is the only obstruction, such terms
can still be bounded in unweighted spaces by following the model of the finite Born series, see Lemma~\ref{lem:BS}.
We claim that every term of order $O(\lambda^{n-2})$ in the expansion of~\eqref{eq:nastydiff}
can be handled in this manner, except the ones singled out for special consideration above.

\begin{proof}

The  most delicate case, for the purposes of 
obtaining an unweighted bound, occurs when the
`+/-' difference
acts on a leading or lagging free resolvent.
The goal now is to come up with an unweighted dispersive bound for the expression

\begin{align}\label{R0Pe}
\int_0^1 e^{it\lambda^2} \chi(\lambda) \lambda^{-1}
(R_0^+(\lambda^2) - R_0^-(\lambda^2))(x,z_1)V(z_1)P_e
(z_1,y) \,d\lambda,
\end{align}
where $P_e$ is the finite rank projection onto the nullspace of $-\Delta + V$.  In Lemma~\ref{lem:1canc},
we showed that this term can be bounded by
$ \la x \ra^2 |t|^{-\f n2}$, here we wish to
remove the dependence on $x$, and respectively the
dependence on $y$ for the corresponding term involving
$P_e(x,z_1)V(z_1)(R_0^+(\lambda^2) - R_0^-(\lambda^2))(z_1,y)$.  For clarity, we are
quite explicit about the spatial variable dependence of
the various operators.  In particular, we show that
the dependence on $x$ or $y$ in the result bounds can
be changed into growth in the inner spatial variable
$z_1$, which can be controlled by the decay of the
potential $V(z_1)$.

In odd dimensions, one can write out 
\begin{equation}
R_0^+(\lambda^2)(x,z_1) = \sum_{j=0}^{\frac{n-3}{2}}
\lambda^{2j} G_{2j} + i\lambda^{n-2}G_{n-2} + \lambda^{n-1}G_{n-1} + 
\lambda^{n-2}K_+(\lambda|x-z_1|),
\end{equation} 
see Lemma~\ref{lem:R0exp} above.
As usual $R_0^-(\lambda^2)$ is the complex conjugate of $R_0^+(\lambda^2)$.

Then $R_0^+(\lambda^2) - R_0^-(\lambda^2) =
2i \lambda^{n-2}G_{n-2} + \lambda^{n-2}(K_+(\lambda|x-z_1|)- K_-(\lambda|x-z_1|))$
with the lower-order terms all cancelling each other.
We can already write out
the contribution
\begin{equation} 
\Bigl( \int_0^1 e^{it\lambda^2} \lambda^{n-3} \chi(\lambda) \, d\lambda\Bigr) G_{n-2}VP_e
= (C |t|^{1 - \frac{n}{2}} + O(|t|^{-\frac{n}{2}})) 1VP_e.
\end{equation}
These terms don't require weights, as $1VP_e$ is a bounded operator from $L^1$ to $L^\infty$, this
follows from the mapping properties of $P_e$ in
Corollary~\ref{Pemapping} and the decay of $V$.
And of course they vanish in the case where $1 V P_e = 0$.

Estimates for the terms with $K_\pm(\lambda|x-z_1|)$ do not require additional
cancellation, so we treat only the $K_+$ case.
Near zero, i.e. when $\lambda|x-z_1|\leq \f12$, the 
function $K_+(\lambda|x-z_1|)$ can be viewed as a Taylor series remainder, and for
large values of $\lambda|x-z_1|$ it follows the decay of the resolvent kernel, offset by the
collection of monomial terms  $\lambda^{j+2-n} G_{j}$.   Recall that each
operator $G_j$ has kernel $G_j(x,z_1) = c_j|x-z_1|^{2+j-n}$.  Using the expansions set forth
in Lemma~\ref{lem:R0exp}, one sees that
\begin{equation} \label{eq:K_estimates}
K_+(z) \sim \begin{cases} C_n z^2 &\text{ if } z \leq 1 \\
C_{n-1}z + C_{n-2} + \cdots \frac{C_0}{z^{n-2}} + C\frac{e^{iz}}{z^{\frac{n-1}{2}}} &\text{ if } z > 1
\end{cases} .
\end{equation}

And its derivatives are bounded by
\begin{equation} \label{eq:K'_estimates}
|K_+^{(\ell)}(z)| \les \begin{cases} z^{2-\ell} &\text{ if } z \leq 1 \\
\max(z^{1-\ell}, z^{\frac{1-n}{2}}) &\text{ if } z > 1
\end{cases} .
\end{equation}

This yields the useful uniform bound 
\begin{equation} \label{eq:K_uniform}
|K_+^{(\ell)}(z)| \les z^{2-\ell}, \quad \text{for }0 \leq \ell \leq \frac{n+3}{2},
\end{equation}
which is sufficient for most of the calculations to follow.  More care is required to control
$K_+^{(\ell)}(z)$ in a few terms that we need to differentiate to order $\ell = \frac{n+5}{2}$.

We wish to control the size of
\begin{equation}\label{Kint}
\int_0^{1} e^{it\lambda^2} \chi(\lambda) \lambda^{n-3}K_+(\lambda|x-z_1|)\,d\lambda.
\end{equation}
We break this into to pieces, $\eqref{Kint}:=T_{low}+T_{med}$, which we bound in
Lemmas~\ref{lem:Tlow} and \ref{lem:Thigh} respectively.

If the
difference of free resolvents occurs affects `inner
resolvents', 
$$
	(R_0^-(\lambda^2)V)^j [R_0^+(\lambda^2)-R_0^-(\lambda^2)](VR_0^+(\lambda^2))^{m-j}v \bigg[\frac{D_1}{\lambda^2}+ \widetilde{O}(1)\bigg] (R_0^+V)^m R_0^+(\lambda^2)
$$
the proof is somehow less delicate.  The weights only occur if
$\frac{n+1}{2}$ derivatives act on either the leading
or lagging free resolvent.  Bounding the contribution
of these terms is essentially identical in form to how
one controls the Born series terms in Lemma~\ref{lem:BS}.
To avoid repetition, we refer the reader to the proof
of Lemma~\ref{lem:BS} and give only a brief sketch of the
bound for these terms. 

Using
$R_0^+(\lambda^2)-R_0^-(\lambda^2)= \widetilde O_{\frac{n-1}{2}}(\lambda^{n-2})$, we can control the
$\lambda^{-2}$ singularity.  Then one uses
Lemmas~\ref{recurrence} to see that
$$
	\lambda \bigg(\frac{d}{d\lambda}\frac{1}{\lambda}
	\bigg)^{\frac{n-1}{2}} R_0^\pm (\lambda^2)(x,z_1)
	=e^{\pm i\lambda |x-z_1|}.
$$
At this point, integrating by parts against the imaginary
Gaussian would result in spatial weights.  So 
one uses a modification of stationary phase in Lemma~\ref{lem:statphase}.

Similarly, the next term in the `+/-' difference of the
operators $M^{\pm}(\lambda)^{-1}$ is of order
$\lambda^{n-2}$.  That is, from
Corollary~\ref{cor:Minv} we see
$$
	M^+(\lambda)^{-1}-M^{-}(\lambda)^{-1}=	
	2i\lambda^{n-2}M_{n-2}+\widetilde O_{\frac{n+1}{2}}
	(\lambda^{n-2+}).
$$
As in the previous bounds, the contribution of this
can be bounded  
by $|t|^{-\f n2}$.

\end{proof}

We now consider estimates for the low-energy contribution
\begin{equation*}
T_{low} = \int_0^{|t|^{-\frac12}} e^{it\lambda^2}\chi(\lambda) \lambda^{n-3}K_+(\lambda|x-z_1|)\,d\lambda.
\end{equation*}

\begin{lemma}\label{lem:Tlow}

$T_{low}VP_e$ is a bounded operator from $L^1$ to $L^{\infty, -2}$ whose norm is dominated
by $|t|^{-\frac{n}{2}}$.  If $1VP_e = 0$, the same is true with estimates in $L^{\infty, -1}$.
If both $1VP_e = 0$ and $x_jVP_e = 0$ for each $j \in \{1\ldots n\}$,
then the operator bound is valid between $L^1$ and $L^\infty$.

\end{lemma}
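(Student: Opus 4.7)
The plan is to exploit the uniform bound $|K_+(z)| \les z^2$ from \eqref{eq:K_uniform} together with a Taylor expansion of $z_1 \mapsto K_+(\lambda|x-z_1|)$ about $z_1 = 0$ to identify exactly which orthogonality conditions on the eigenspace are responsible for each unweighted improvement. First, the crude estimate $|K_+(\lambda|x-z_1|)| \les \lambda^2|x-z_1|^2$ combined with $\int_0^{|t|^{-1/2}} \lambda^{n-1}\,d\lambda \les |t|^{-n/2}$ yields $|T_{low}(x, z_1)| \les |x-z_1|^2 |t|^{-n/2}$. Using $|x-z_1|^2 \les \la x\ra^2 \la z_1\ra^2$ and the fact that $\la z_1\ra^2 V(z_1)P_e(z_1,y)$ is integrable in $z_1$ uniformly in $y$, thanks to the decay of $V$ and of the eigenfunctions composing $P_e$, delivers the generic $L^1 \to L^{\infty,-2}$ bound on $T_{low}VP_e$.

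To improve this when the cancellation hypotheses hold, I would Taylor expand in the $z_1$ variable about the origin, writing
\begin{align*}
K_+(\lambda|x-z_1|) = K_+(\lambda|x|) - \lambda K_+'(\lambda|x|)\frac{x\cdot z_1}{|x|} + R(x, z_1, \lambda),
\end{align*}
with $|R| \les |z_1|^2 \sup_{s \in [0,1]} |\nabla^2_{z_1}K_+(\lambda|x-sz_1|)|$. The technical heart of the argument is the uniform estimate $|\nabla^2_{z_1}K_+(\lambda|x-z_1|)| \les \lambda^2$: the naive $|x-z_1|^{-1}$ singularity coming from differentiating the unit vector $(x-z_1)/|x-z_1|$ is cancelled against the small-argument bound $|K_+'(\lambda|x-z_1|)| \les \lambda|x-z_1|$ from \eqref{eq:K_uniform}, while the second-order term contains $\lambda^2 K_+''$ which is bounded by $\lambda^2$. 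Hence $|R| \les \lambda^2 |z_1|^2$ globally, and the contribution of $R$ to $T_{low}(x,z_1)$ is bounded by $|z_1|^2 |t|^{-n/2}$.

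Substituting this expansion and composing with $V$ and $P_e$ decomposes the kernel of $T_{low}VP_e$ as the sum of three pieces. The $z_1$-independent piece factors as $F_0(x, t)\cdot \int V(z_1)P_e(z_1, y)\, dz_1$, where $F_0(x,t) := \int_0^{|t|^{-1/2}} e^{it\lambda^2}\chi(\lambda)\lambda^{n-3}K_+(\lambda|x|)\,d\lambda$ satisfies $|F_0(x, t)| \les |x|^2|t|^{-n/2}$, and the second factor is precisely the kernel of $1VP_e$, which vanishes under the hypothesis $\int V\psi\,dx = 0$. The piece linear in $z_1$ factors similarly as $F_1(x,t)\cdot \int z_{1}V(z_1)P_e(z_1,y)\,dz_1$ with $|F_1(x,t)| \les |x||t|^{-n/2}$, and the second factor is the kernel of $xVP_e$, vanishing under $\int x_jV\psi\,dx = 0$. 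The remainder piece contributes at most $|z_1|^2 |t|^{-n/2}$ in the $z_1$ integrand, which is absorbed by $V(z_1)P_e(z_1,y)$ to yield a bound of $|t|^{-n/2}$ uniformly in $x, y$. Peeling off the first two terms according to whichever orthogonality conditions are assumed produces the three claimed mapping bounds.

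The main obstacle I expect is the uniform Hessian estimate in the Taylor remainder: one has to bookkeep how the factor $|x-z_1|^{-1}$ produced by differentiating the unit vector $(x-z_1)/|x-z_1|$ pairs with the vanishing of $K_+'$ near zero and the boundedness of $K_+''$ everywhere. Beyond this point the argument is a routine combination of Taylor's theorem with size estimates for the $\lambda$-integral over $(0,|t|^{-1/2})$, where the oscillatory factor $e^{it\lambda^2}$ is essentially unity and plays no role.
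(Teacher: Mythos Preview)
Your proposal is correct and follows essentially the same argument as the paper: both use the uniform bound \eqref{eq:K_uniform} for the basic $L^{\infty,-2}$ estimate, and then Taylor-expand $z_1 \mapsto K_+(\lambda|x-z_1|)$ about $z_1=0$, with the key Hessian estimate $\lambda|K_+'(\lambda z)|/|z| + \lambda^2|K_+''(\lambda z)| \les \lambda^2$ controlling the second-order remainder. The paper treats the first-order case via the Mean Value Theorem and the second-order case via the Taylor remainder separately, whereas you write the full second-order expansion once and peel off whichever leading terms vanish, but this is purely an organizational difference.
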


\begin{proof}

First note that $P_e$ is a bounded finite rank operator from $L^1$ to $L^\infty$ by Corollary~\ref{Pemapping}.
Enough decay of the potential is assumed so that $\text{Range}\, VP_e \subset L^{1, 2}$, which follows if $|V(x)| \les \la x\ra^{-n-2-}$.

It follows immediately from~\eqref{eq:K_uniform} that
$|T_{low}(x,z_1)| \les |t|^{-\frac{n}{2}} |x-z_1|^2 \les |t|^{-\frac{n}{2}}\la x\ra^2\la z_1\ra^2$,
 making it a bounded operator from
$L^{1,2}$ to $L^{\infty,-2}$ with norm $|t|^{-n/2}$.

In the event that $1 V P_e = 0$, there is room for improvement due to
the extra cancellation
\begin{equation*}
T_{low}VP_e = \Bigl(\int_0^{|t|^{-\frac12}} e^{it\lambda^2} 
\lambda^{n-3}(K_+(\lambda|x-z_1|) - K_+(\lambda|x|)) \,d\lambda\Bigr) VP_e.
\end{equation*}
The bound on $K_+'$ from~\eqref{eq:K_uniform} and the Mean Value Theorem
imply that
\begin{equation*}
|K_+(\lambda|x-z_1|) - K_+(\lambda|x|)| \les \lambda^2 |z_1| \max(|x|, |x-z_1|)
\les \lambda^2 \la x\ra \la z_1\ra^2,
\end{equation*}
which gives a bounded map from $L^{1,2}$ to $L^{\infty, -1}$ for each $\lambda > 0$. 
The operator norm of $T_{low}VP_e$ is still controlled
by $\int_0^{|t|^{-1/2}} \lambda^{n-1}\,d\lambda \sim |t|^{-\frac{n}{2}}$ as above.

If we further assume that the range of $VP_e$ is orthogonal to all linear functions,
i.e. if $xVP_e=0$, then
another layer of corrections is possible.
\begin{equation*}
T_{low}VP_e = \Bigl(\int_0^{|t|^{-\frac12}} e^{it\lambda^2} 
\lambda^{n-3}\big(K_+(\lambda|x-z_1|) - K_+(\lambda|x|) + 
\lambda K_+'(\lambda|x|) {\textstyle \frac{x}{|x|}} \cdot z_1\big) \,d\lambda\Bigr) VP_e.
\end{equation*}
The Taylor remainder theorem, with respect to $x \in \R^n$, gives an upper bound 
\begin{align*}
\big|K_+(\lambda|x-z_1|) - K_+(\lambda|x|) + \lambda K_+'(\lambda|x|){\textstyle \frac{x}{|x|}} \cdot z_1\big| 
&\les |z_1|^2 \max_z\Bigl(\lambda\frac{|K_+'(\lambda z)|}{|z|} + \lambda^2|K_+''(\lambda z)|\Bigr) \\
&\les \lambda^2\la z_1\ra^2,
\end{align*}
with the last inequality following from~\eqref{eq:K_uniform}.
Using the previous arguments, the range of $T_{low}VP_e$ is now bounded by 
$|t|^{-\f n2}$ times a
constant function in $x$, without polynomial weights. 

\end{proof}

The order of the operators $T_{low}VP_e$ occurs when
the difference of free resolvents from \eqref{alg fact}
occurs on the leading resolvents.  If the 
`+/-' difference
occurs on the lagging resolvents, one must consider
the contribution of $P_eVT_{low}$.

\begin{corollary}

$P_eVT_{low}$ is a bounded operator from $L^{1,2}$ to $L^{\infty}$ whose norm is dominated
by $|t|^{-\frac{n}{2}}$.  If $P_eV1 = 0$, the same is true with estimates from $L^{1, 1}$.
If both $P_eV1 = 0$ and $P_eVx_j = 0$ for each $j \in \{1\ldots n\}$,
then the operator bound is valid between $L^1$ and $L^\infty$.

\end{corollary}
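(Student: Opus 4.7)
The plan is to mirror the proof of Lemma~\ref{lem:Tlow} step for step, taking advantage of the symmetry $T_{low}(z_2,x)=T_{low}(x,z_2)$ of the integral kernel. Writing the composition out, the operator $P_eVT_{low}$ has integral kernel
\begin{equation*}
K(y,x)=\int P_e(y,z_2)V(z_2)T_{low}(z_2,x)\,dz_2,
\end{equation*}
so the inner integration variable is now $z_2$, and the input variable $x$ plays the role that the output variable $x$ played in Lemma~\ref{lem:Tlow}. To establish an $L^{1,\sigma}\to L^\infty$ bound it suffices to prove $|K(y,x)|\les |t|^{-n/2}\la x\ra^\sigma$ uniformly in $y$.

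First I would obtain the unconditional estimate. From the uniform bound \eqref{eq:K_uniform} one gets $|T_{low}(z_2,x)|\les |t|^{-n/2}\la z_2\ra^2\la x\ra^2$ after evaluating $\int_0^{|t|^{-1/2}}\lambda^{n-1}\,d\lambda\sim |t|^{-n/2}$. Since $|V(z_2)|\la z_2\ra^2$ is integrable and $P_e$ is $L^1\to L^\infty$ bounded (Corollary~\ref{Pemapping}), the $z_2$-integral against $P_e(y,z_2)V(z_2)$ is uniformly bounded in $y$, yielding $|K(y,x)|\les |t|^{-n/2}\la x\ra^2$, which is the $L^{1,2}\to L^\infty$ claim.

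Next I would invoke the cancellation $P_eV1=0$, which reads $\int P_e(y,z_2)V(z_2)\,dz_2=0$ for every $y$. This permits replacing $T_{low}(z_2,x)$ by $T_{low}(z_2,x)-T_{low}(0,x)$, i.e.\ replacing $K_+(\lambda|x-z_2|)$ by $K_+(\lambda|x-z_2|)-K_+(\lambda|x|)$. The mean value theorem and \eqref{eq:K_uniform} give the pointwise bound $\les \lambda^2|z_2|\max(|x|,|x-z_2|)\les\lambda^2\la x\ra\la z_2\ra^2$, so the $\lambda$-integral followed by integration against $|P_e(y,z_2)V(z_2)|$ produces $|K(y,x)|\les |t|^{-n/2}\la x\ra$, the desired $L^{1,1}\to L^\infty$ bound. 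Finally, under the further cancellation $P_eVx=0$, I would subtract the first-order Taylor correction $\lambda K_+'(\lambda|x|)\tfrac{x}{|x|}\cdot z_2$ (which is linear in $z_2$ with coefficients independent of $y,z_2$ and so has zero $z_2$-integral against $P_e(y,z_2)V(z_2)$). A second-order Taylor remainder in the $z_2$ variable, combined with the observation that both $\lambda|K_+'(\lambda|\xi|)|/|\xi|$ and $\lambda^2|K_+''(\lambda|\xi|)|$ are bounded by $\lambda^2$ on account of \eqref{eq:K_uniform}, produces the $x$-independent bound $\les\lambda^2\la z_2\ra^2$, which after integration gives $|K(y,x)|\les |t|^{-n/2}$ and hence the $L^1\to L^\infty$ estimate.

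The main obstacle I expect is simply bookkeeping: one must verify that the functions being subtracted at each stage are genuinely of the form needed to exploit the two orthogonality conditions, and that the Taylor remainder for $K_+$, in particular the behavior of $K_+'(z)/z$ near $z=0$, is uniform enough that no hidden $x$-weight reappears in the doubly vanishing case. Since $K_+(z)\sim C_nz^2$ near zero by \eqref{eq:K_estimates}, this ratio is bounded, so no such complication arises and the three estimates follow cleanly from the symmetric analogues of the arguments in Lemma~\ref{lem:Tlow}.
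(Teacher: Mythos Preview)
Your proposal is correct and takes essentially the same approach as the paper. The paper does not give a separate proof of this Corollary at all, treating it as immediate from Lemma~\ref{lem:Tlow} by the symmetry between leading and lagging resolvents; your write-up simply makes that symmetry argument explicit, reproducing each step of Lemma~\ref{lem:Tlow} with the roles of the input variable $x$ and the output variable $y$ interchanged and with the orthogonality conditions $P_eV1=0$, $P_eVx=0$ read as vanishing $z_2$-moments.
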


The same operator estimates also hold for the intermediate-energy contribution
\begin{equation*}
T_{mid} = \int_{|t|^{-\frac12}}^1 e^{it\lambda^2} \lambda^{n-3}K_+(\lambda|x-z_1|) \chi(\lambda)\,d\lambda.
\end{equation*}

\begin{lemma}\label{lem:Thigh}

$T_{mid}VP_e$ is  a bounded operator from $L^1$ to $L^{\infty, -2}$ whose norm is dominated
by $|t|^{-\frac{n}{2}}$.  If $1VP_e = 0$, the same is true with estimates in $L^{\infty, -1}$.
If both $1VP_e = 0$ and $x_jVP_e = 0$ for each $j \in \{1\ldots n\}$,
then the operator bound is valid between $L^1$ and $L^\infty$.

\end{lemma}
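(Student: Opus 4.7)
The plan is to follow the three-tier structure of Lemma~\ref{lem:Tlow} (no cancellation, $1VP_e=0$, and full cancellation), but since $T_{mid}$ integrates $\lambda$ over $[|t|^{-1/2},1]$ we cannot rely on pointwise bounds of the amplitude alone — we must extract decay from the oscillatory factor $e^{it\lambda^2}$. First I would split $K_+$ into its polynomial part and its oscillatory tail according to~\eqref{eq:K_estimates}. The polynomial part contributes expressions of the same type as the lower-order terms already analyzed in the expansion of $R_0^\pm(\lambda^2)$, and can be grouped with those contributions. The oscillatory tail behaves like $(\lambda|x-z_1|)^{-(n-1)/2}e^{i\lambda|x-z_1|}$ for large argument, which I plan to handle separately by stationary phase.

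For the weighted bound from $L^1$ to $L^{\infty,-2}$, I would apply integration by parts against $e^{it\lambda^2}$, using $e^{it\lambda^2} = (2it\lambda)^{-1}\partial_\lambda e^{it\lambda^2}$, repeated $k$ times until the accumulated factor of $|t|^{-k}$ reaches $|t|^{-n/2}$. The amplitude derivatives are controlled by the uniform bound $|K_+^{(\ell)}(z)| \les z^{2-\ell}$ from~\eqref{eq:K_uniform}, producing at worst $\la x-z_1\ra^2$ in the final estimate. One tracks the boundary contribution at $\lambda = |t|^{-1/2}$, which is already of the desired size $|t|^{-n/2}|x-z_1|^2$, and the boundary at $\lambda=1$ is killed by $\chi$.

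For the cancellative cases, I would import directly the Taylor-subtraction strategy from the proof of Lemma~\ref{lem:Tlow}. Under $1VP_e = 0$ one writes
\begin{equation*}
T_{mid}VP_e = \Bigl(\int_{|t|^{-1/2}}^1 e^{it\lambda^2}\chi(\lambda)\lambda^{n-3}\bigl(K_+(\lambda|x-z_1|)-K_+(\lambda|x|)\bigr)\,d\lambda\Bigr)VP_e,
\end{equation*}
and the Mean Value Theorem together with $|K_+'(z)| \les z$ trades one factor of $\la x\ra$ for a factor of $\la z_1\ra$, absorbed by the decay of $V$ inside $VP_e$. Under the second cancellation $xVP_e = 0$ one subtracts the full first-order Taylor polynomial in $z_1$ and uses $|K_+''(z)| \les 1$ to produce an unweighted estimate, exactly as in Lemma~\ref{lem:Tlow}. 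The corresponding corollary for $P_eVT_{mid}$ follows by symmetry.

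The main obstacle will be when $\lambda|x-z_1| \gg 1$ and we are forced to differentiate the oscillatory tail of $K_+$: the uniform bound~\eqref{eq:K_uniform} is no longer sharp and one must use the second branch of~\eqref{eq:K'_estimates}, where high derivatives only yield $z^{(1-n)/2}$. Plain integration by parts against $e^{it\lambda^2}$ then introduces spatial growth through $1/(t\lambda)$ factors that do not absorb the new phase $e^{i\lambda|x-z_1|}$. I would resolve this by shifting to a stationary phase argument around the critical point $\lambda_{cr}\sim |x-z_1|/t$ of the combined phase $t\lambda^2 \pm \lambda|x-z_1|$, following the modified stationary phase reasoning of Lemma~\ref{lem:statphase} already invoked for the inner-resolvent contributions of Lemma~\ref{lem:2canc}. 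This is the only delicate point; the remaining bookkeeping mirrors Lemma~\ref{lem:Tlow}.
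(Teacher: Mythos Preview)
Your overall strategy matches the paper's: integrate by parts $\frac{n-1}{2}$ times, use the Taylor subtraction in $z_1$ to exploit the cancellation hypotheses, and invoke the modified stationary phase Lemma~\ref{lem:statphase} for the one term where the uniform bound~\eqref{eq:K_uniform} fails at order $\frac{n+5}{2}$. You have correctly located the sole obstruction.

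There is, however, a genuine gap in your resolution of that obstruction. You propose to center stationary phase at the critical point $\lambda_{cr}\sim |x-z_1|/t$ of the combined phase $t\lambda^2+\lambda|x-z_1|$. The paper explicitly avoids this choice: since $z_1$ is an integration variable (it is summed over through $VP_e$), a $z_1$-dependent phase center interacts badly with the second-order Taylor subtraction in $z_1$. Concretely, the subtracted amplitude is $F_{\frac{n-1}{2}}(\lambda,|x-z_1|)-F_{\frac{n-1}{2}}(\lambda,|x|)+\nabla F_{\frac{n-1}{2}}(\lambda,|x|)\cdot z_1$, whose three terms carry the distinct oscillatory phases $e^{i\lambda|x-z_1|}$ and $e^{i\lambda|x|}$; factoring out $e^{i\lambda|x-z_1|}$ destroys the clean Taylor-remainder structure that you need to bound $D^2_{z_1}\tilde F$ uniformly. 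The paper's fix (attributed to Yajima) is to place the stationary point at $\lambda_0=-|x|/(2t)$, which is independent of $z_1$, and to set $\tilde F(\lambda,x,z_1)=\lambda^{\frac{n-5}{2}}e^{-i\lambda|x|}F_{\frac{n-1}{2}}(\lambda,|x-z_1|)$. The mismatch between $|x|$ and $|x-z_1|$ then costs an extra factor of $\la z_1\ra$ (so the final bound has $\la z_1\ra^3$ rather than $\la z_1\ra^2$), which is harmlessly absorbed by the decay of $V$. Without this adjustment your stationary phase step does not close.

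A secondary remark: your proposed upfront splitting of $K_+$ into a polynomial part and an oscillatory tail is less convenient than the paper's organization. That decomposition is only asymptotic for $z>1$, and the ``polynomial part'' is precisely the negative of the subtracted Taylor terms $\lambda^{j+2-n}G_j$, so grouping it with lower-order contributions effectively undoes the definition of $K_+$. The paper instead keeps $K_+$ intact, integrates by parts to produce the functions $F_\ell(\lambda,r)=r^\ell K_+^{(\ell)}(\lambda r)$, and applies the Taylor subtraction directly to those; the oscillatory behavior only enters at the very last step, isolated to a single term.
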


\begin{proof}
Without loss of generality, we take $t>0$.  The case of
$t<0$ follows similarly with obvious modifications.
Integrate by parts $\frac{n-1}{2}$ times to obtain
\begin{multline} \label{eq:Tmid}
T_{mid}(|x-z_1|) = t^{-\frac{n-1}{2}}\int_{t^{-\frac12}}^1 
e^{it\lambda^2}\Bigl[\sum_{k+ \ell \leq \frac{n-1}{2}} c_{k,\ell} \lambda^{k+\ell-2} 
F_\ell(\lambda, |x-z_1|)
\chi^{(k)}(\lambda)\Bigr]\,d\lambda \\
+ \sum_{\ell =0}^{(n-3)/2} c_\ell \,t^{\frac{2-n-\ell}{2}} 
F_\ell(t^{-\frac12}, |x-z_1|),
\end{multline}
where $F_\ell(\lambda, |x-z_1|) = |x-z_1|^{\ell} K_+^{(\ell)}(\lambda |x-z_1|)$.
By \eqref{eq:K_uniform}, we can control 
\begin{equation} \label{eq:F_ell}
|F_\ell(\lambda, |x-z_1|)| \les |x-z_1|^2 \lambda^{2-\ell} \quad \text{for all} \quad 0\leq\ell \leq \frac{n+3}{2}
\end{equation}
As a result, each term in the last sum is bounded by $|t|^{-\frac{n}{2}}|x-z_1|^2$.

In the generic case, where one has some tolerance for weights, it suffices to integrate by
parts again to obtain the expression
\begin{multline} \label{eq:Tmid_simple}
T_{mid}(|x-z_1|) = t^{-\frac{n+1}{2}} \int_{t^{-\frac12}}^1 e^{it\lambda^2}
\Bigl[ \sum_{k+\ell \leq \frac{n+1}{2}} c_{k,\ell} 
\lambda^{k+\ell-4} F_\ell(\lambda, |x-z_1|) \chi^{(k)}(\lambda)\Bigr]\,d\lambda \\
+ \sum_{\ell =0}^{(n-1)/2} c_\ell \,t^{\frac{2-n-\ell}{2}} 
F_\ell(t^{-\frac12}, |x-z_1|).
\end{multline} 
Using \eqref{eq:F_ell} again, the boundary term is smaller than $|t|^{-\f n2}$, as
are all the integral terms corresponding to the choice $k=0$.  When $k \geq 1$,
the support of $\chi^{(k)}(\lambda)$ permits a bound of $|t|^{-\frac{n+1}{2}}|x-z_1|^2$,
which is an improvement for times $|t| >1$.

If $1VP_e = 0$, we would like to take
systematic advantage of cancellation by inserting $F_\ell(\lambda,|x-z_1|) - F_\ell(\lambda,|x|)$
every place where $F_\ell(\lambda,|x-z_1|)$ appears in~\eqref{eq:Tmid}.  In the case
where all $x_jVP_e = 0$ as well, the second-order remainder
$F_\ell(\lambda,|x-z_1|) - F_\ell(\lambda,|x|) + \nabla F_\ell(\lambda,|x|) \cdot z_1$ may be used.
To estimate these differences we note that for general radial functions
\begin{align*}
\big| F(|x-z_1|) - F(|x|)\big| &\leq \la z_1 \ra  \max_z |F'(z)| \\
\big| F(|x-z_1|) - F(|x|) + F'(|x|){\textstyle \frac{x}{|x|}}\cdot z_1\big| 
&\leq \la z_1 \ra^2 \max_z (|F''(z) + {\textstyle \frac{1}{z}} |F'(z)|)
\end{align*}
The maxima are taken over $z$ along the line segment joining $x$ to $x-z_1$, or perhaps
over $|z| \leq |x| + |z_1| \leq 2 \la x\ra \la z_1\ra$.
As applied to the functions $F_\ell(\lambda, z) = |z|^\ell K_+^{(\ell)}(\lambda z)$, this yields the
bounds
\begin{align}
\big| F_\ell(\lambda,|x-z_1|) - F_\ell(\lambda,|x|)\big| &\les \lambda^{2-\ell}\la x\ra \la z_1\ra^2  
\quad \text{ for } 0 \leq \ell \leq \frac{n+1}{2}.  \label{eq:F_ell_diff}\\
\big| F_\ell(\lambda,|x-z_1|) - F_\ell(\lambda,|x|) + \nabla F_\ell(\lambda,|x|)\cdot z_1\big| 
&\les \lambda^{2-\ell} \la z_1\ra^2 \quad \text{ for } 0 \leq \ell \leq \frac{n-1}{2}.
\label{eq:F_ell_second_diff}
\end{align}
The restrictions on $\ell$ come from the range of derivatives for which~\eqref{eq:K_uniform}
is valid.

Suppose that $1VP_e = 0$.  One can apply~\eqref{eq:F_ell_diff} to every term
in~\eqref{eq:Tmid_simple} to obtain
\begin{equation*}
|T_{mid}(|x-z_1|) - T_{mid}(|x|)| \les |t|^{-\frac{n}{2}} \la x\ra \la y\ra^2
\end{equation*}
As $VP_e$ is a bounded map from $L^1$ to $L^{1,2}$ it follows that
$\Vert \la x\ra^{-1}T_{mid}VP_e\Vert_{L^1\to L^{\infty}} \les |t|^{-\f n2}$.

In the case where $1VP_e = 0$ and $xVP_e = 0$, one can
still apply~\eqref{eq:F_ell_second_diff} and integration by parts to every
expression in~\eqref{eq:Tmid} except for the term with $k=0$ and
$\ell = \frac{n-1}{2}$.  In each case the result is consistent with the expected
bound
\begin{equation*}
\big|T_{mid}(|x-z_1|) - T_{mid}(|x|) + \nabla T_{mid}(|x|) \cdot z_1 \big|
\les |t|^{-\frac{n}{2}}\la z_1 \ra^2
\end{equation*}
The remaining task is to show that the same is true for the last integral term
\begin{equation*}
t^{\frac{1-n}{2}}\int_{t^{-\frac12}}^1 e^{it\lambda^2} \lambda^{\frac{n-5}{2}}
\big(F_{\frac{n-1}{2}}(\lambda,|x-z_1|) - F_{\frac{n-1}{2}}(\lambda, |x|)
+\nabla F_{\frac{n-1}{2}}(\lambda,|x|)\cdot z_1\big) \chi(\lambda)\,d\lambda
\end{equation*}
The problem here is that if one integrates by parts immediately, the Taylor remainder
estimate will come down to $D^2_{z_1} F_{\frac{n+1}{2}}(\lambda,|x-z_1|)$,
and~\eqref{eq:K_uniform} is false for $z>1$ if we're taking that many derivatives.
What goes wrong specifically is that the oscillatory function is really
$e^{it\lambda^2}e^{i\lambda|x-z_1|}$ with the second part coming from the
resolvent kernel.  The stationary phase point is therefore $\lambda = -|x-z_1|/2t$
instead of zero.

The dependence on $z_1$ is inconvenient, since $z_1$ is also a variable
in the next operator down the line.  To work around it, we adopt the
technique, due to Yajima, of placing the stationary phase point at $\lambda_0 = -|x|/2t$
instead.  The discrepancy between $|x|$ and $|x-z_1|$ will cost us a factor of
$\la z_1\ra$ which gets absorbed into the decay of the potential.

To set up the calculation, let 
$\tilde{F}(\lambda,x,y) = \lambda^{\frac{n-5}{2}}
e^{-i\lambda|x|}F_{\frac{n-1}{2}}(\lambda, |x-z_1|)$.
We will apply the general stationary phase bound
from Lemma~\ref{lem:statphase},
\begin{equation*} \label{eq:StatPhase}
\Big| \int_{t^{-\frac12}}^1 e^{it(\lambda-\lambda_0)^2}F(\lambda,x,z_1)\chi(\lambda)d\lambda\Big|
\les |t|^{-\frac12} \sup_\lambda |F(\lambda,x,z_1)| + |t|^{-\frac34}
\Big[\int_{t^{-\frac12}}^1 \big| {\textstyle \frac{\partial F}{\partial \lambda}}(\lambda,x,z_1)\big|^2\,d\lambda\Big]^{\frac12}.
\end{equation*} 
In particular we need to
apply Lemma~\ref{lem:statphase} to the functions
$D^2_{z_1} \tilde{F}(\lambda,x,z)$ and determine what bounds hold uniformly over $x \in \R^n$
and $|z| \leq |z_1|$.

The supremum estimate for $D^2_{z_1}\tilde{F}$ is essentially the same as~\eqref{eq:F_ell_second_diff}.
With $z_{1j}$ the $j^{th}$ component of $z_1$,
\begin{align*}
D^2_{z_{1i}z_{1j}}\tilde{F}(\lambda,x,z) &= \lambda^{\frac{n-5}{2}}e^{-i\lambda|x|}
D^2_{z_{1i}z_{1j}} F_{\frac{n-1}{2}}(\lambda,|x-z|) \\
& = \frac{(x_i-z_i)(x_j-z_j)\lambda{\frac{n-3}{2}}e^{-i\lambda|x|}}{|x-z|^4}
\Big(\lambda F_{\frac{n+3}{2}}(\lambda,|x-z|) 
- 2F_{\frac{n+1}{2}}(\lambda,|x-z|)\Big) \\
&\hskip 1in + \delta_{ij} \frac{\lambda^{\frac{n-3}{2}}}{|x-z|^2}e^{-i\lambda|x|}F_{\frac{n+1}{2}}(\lambda,|x-z|).
\end{align*}
These expressions are all of unit size by~\eqref{eq:F_ell},
therefore 
\begin{equation} \label{eq:tildeF_bestbound}
\big|\tilde{F}(\lambda,x,z_1) - \tilde{F}(\lambda,x,0)
 - \nabla_y\tilde{F}(\lambda,x,0)\cdot z_1\big| \les \la z_1 \ra^2 \text{ for all }\lambda \geq 0.
\end{equation}
Most of the estimates for $\D_\lambda D^2_{z_1}\tilde{F}(\lambda,x,z)$ follow a similar nature.
If the $\lambda$-derivative falls on a power of $\lambda$, the resulting expression will be
controlled by $\lambda^{-1}$, and $\int_{t^{-\frac12}}^1 \lambda^{-2}\,d\lambda \leq |t|^{\f12}$.

For $\frac{d}{d\lambda}[e^{-i\lambda|x|}F_\ell(\lambda,|x-z|)]$, we use the more detailed
information in~\eqref{eq:K_estimates} to conlude that
\begin{equation*}
e^{-i\lambda|x|}F_{\ell}(\lambda,|x-z|) \sim \left\{\begin{aligned}
&|x-z|^2 e^{-i\lambda|x|} \lambda^{2-\ell}, &\text{ if } \lambda |x-z| \leq 1 \\
\frac{|x-z|}{\lambda^{\ell-1}}&e^{-i\lambda|x|} 
+ \frac{e^{i\lambda(|x-z| - |x|)}}{|x-z|^{\frac{n-1}{2}-\ell}\lambda^{\frac{n-1}{2}}},
&\text{ if } \lambda |x-z| > 1
\end{aligned} \right.
\end{equation*}
In the regime where $\lambda|x-z| \leq 1$, this yields
\begin{equation*}
\frac{d}{d\lambda}[e^{-i\lambda|x|}F_\ell(\lambda,|x-z|)] \les |x-z|^2\lambda^{2-\ell}
(|z| + |x-z| + \lambda^{-1}) \les |x-z|^2\lambda^{2-\ell}(|z| + \lambda^{-1})
\end{equation*}
In the regime where $\lambda|x-z| > 1$, it gives
\begin{align*}
\frac{d}{d\lambda}[e^{-i\lambda|x|}&F_\ell(\lambda,|x-z|)] \\
&\begin{aligned}
&\les
|x-z|\lambda^{1-\ell}(|z| + |x-z| + \lambda^{-1}) + \frac{|x-z|^2\lambda^{2-\ell}}
{(\lambda |x-z|)^{\frac{n+3}{2}-\ell}}(\lambda^{-1} + (|x-z|-|x|)) \\
&\les |x-z|^{2}\lambda^{2-\ell}(|z| + \lambda^{-1})
\end{aligned}
\end{align*}
so long as $\ell \leq \frac{n+3}{2}$.  The end result is that
$|\frac{d}{d\lambda} D^2_{z_1}\tilde{F}(\lambda,x,z)| \les |z| + \lambda^{-1}$, and
consequently
\begin{equation*}
\Big| \frac{d}{d\lambda}\big[\tilde{F}(\lambda,x,z_1) - \tilde{F}(\lambda,x,0)
 - \nabla_y \tilde{F}(\lambda,x,0)\cdot z_1\big] \Big|
\les \la z_1 \ra^3 + \la z_1 \ra^2\lambda^{-1}  \text{ for all }\lambda \geq 0.
\end{equation*}
Over the interval $[t^{-1/2},1]$, the $L^2$ norm of this function is bounded
by $|t|^{\f 14}\la z_1\ra^3$ for $|t| \geq 1$.

Finally, once this estimate and ~\eqref{eq:tildeF_bestbound} are applied
in the context of Lemma~\ref{lem:statphase}, we can conclude that
\begin{equation*}
\big|T_{mid}(|x-z_1|) - T_{mid}(|x|) + \nabla T_{mid}(|x|) \cdot yz_1 \big|
\les |t|^{-\frac{n}{2}}\la z_1 \ra^3.
\end{equation*}
Under the assumptions that $1VP_e$ and $x_jVP_e$
all vanish, and with the decay assumptions on $V$,
$VP_e$ maps into the weighted space $L^{1,3}(\R^n)$,
it follows that $\Vert T_{mid}VP_e f\Vert_\infty \les |t|^{-\f n2}\Vert f \Vert_1$
for all $|t| \geq 1$.

\end{proof}

As before, we have the following immediate corollary.

\begin{corollary}

$P_eVT_{mid}$ is  a bounded operator from $L^{1,2}$ to $L^{\infty}$ whose norm is dominated
by $|t|^{-\frac{n}{2}}$.  If $P_eV1 = 0$, the same is true with estimates from $L^{1, 1}$.
If both $P_eV1 = 0$ and $P_eVx_j = 0$ for each $j \in \{1\ldots n\}$,
then the operator bound is valid between $L^1$ and $L^\infty$.

\end{corollary}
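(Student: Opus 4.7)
The plan is to derive this corollary from Lemma~\ref{lem:Thigh} by recognizing $P_eVT_{mid}$ as the adjoint of $T_{mid}VP_e$ and then invoking duality between weighted $L^1$ and $L^\infty$ spaces. Three symmetries power this identification. First, the kernel $T_{mid}(x,z_1)$ depends only on $|x-z_1|$, so it is symmetric in its two arguments. Second, multiplication by the real-valued potential $V$ trivially has a symmetric diagonal kernel. Third, $P_e$ is the orthogonal projection onto the kernel of the real self-adjoint operator $H=-\Delta+V$, so it admits a real, symmetric kernel $P_e(x,y)=\sum_k \psi_k(x)\psi_k(y)$ using a real orthonormal basis of zero-energy eigenfunctions. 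A direct computation of integral kernels then gives
\begin{equation*}
(T_{mid}VP_e)^*(x,y) \;=\; (T_{mid}VP_e)(y,x) \;=\; \int T_{mid}(y,z_1)V(z_1)P_e(z_1,x)\,dz_1 \;=\; (P_eVT_{mid})(x,y),
\end{equation*}
so the two operators are true adjoints of one another.

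Next I would record the elementary duality between weighted spaces: under the unweighted $L^2$ pairing, $L^{\infty,-\sigma}$ is identified with the dual of $L^{1,\sigma}$, since $\|f\|_{L^{1,\sigma}} = \|\la x\ra^\sigma f\|_1$ pairs with $\|g\|_{L^{\infty,-\sigma}} = \|\la x\ra^{-\sigma} g\|_\infty$. Consequently, any bound of the form $\|A\|_{L^1 \to L^{\infty,-\sigma}} \les |t|^{-n/2}$ transfers immediately to $\|A^*\|_{L^{1,\sigma}\to L^\infty} \les |t|^{-n/2}$ with the same constant. Applying this to the three assertions of Lemma~\ref{lem:Thigh} with $\sigma = 2$, $\sigma = 1$, and $\sigma = 0$ in turn produces the three bounds claimed here.

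It remains to match the cancellation hypotheses. As remarked in the discussion immediately after Theorem~\ref{thm:main}, the condition $1VP_e = 0$ appearing in Lemma~\ref{lem:Thigh} is equivalent to $P_eV1 = 0$ (both expressing $\int V\psi\,dx = 0$ for every $\psi \in {\rm Null}\,H$), and likewise the condition $x_jVP_e = 0$ is equivalent to $P_eVx_j = 0$ (both expressing $\int x_j V\psi\,dx = 0$ for all $j$ and all such $\psi$). With these identifications the three cases of Lemma~\ref{lem:Thigh} map cleanly onto the three cases stated here. The only genuine verification is the adjoint identity in the first paragraph; once that is in hand, the rest is bookkeeping, and I anticipate no serious obstacle.
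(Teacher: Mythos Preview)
Your argument is correct in substance and matches the paper's approach: the paper gives no proof at all, simply stating ``As before, we have the following immediate corollary,'' relying on the evident symmetry between the roles of $x$ and $y$ in the proof of Lemma~\ref{lem:Thigh}.

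One small wrinkle in your formalization: $T_{mid}$ has a complex-valued kernel (since $K_+$ involves $e^{iz}$), so strictly speaking $(T_{mid}VP_e)^*$ has kernel $\overline{(T_{mid}VP_e)(y,x)}$ rather than $(T_{mid}VP_e)(y,x)$. This does not affect your conclusion, because the bounds in Lemma~\ref{lem:Thigh} are really pointwise kernel estimates of the form $|(T_{mid}VP_e)(x,y)| \les |t|^{-n/2}\la x\ra^\sigma$, and the kernel of $P_eVT_{mid}$ is exactly the transpose $(T_{mid}VP_e)(y,x)$ (no conjugate), so the same pointwise bound holds with $\la y\ra^\sigma$ in place of $\la x\ra^\sigma$. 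Either phrase it as a transpose-of-kernels argument, or note that your duality bound applies equally to $P_eV\overline{T_{mid}}$ and that $|\overline{T_{mid}}|=|T_{mid}|$.
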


\section{Dispersive bounds for the finite Born series}\label{sec:finitebs}

In this section we consider the contribution of the finite
Born series terms, \eqref{eq:finitebs}.  We prove

\begin{prop}\label{bsprop}

	The contribution of \eqref{eq:finitebs} to
	\eqref{Stone} is bounded by $|t|^{-\f n2}$
	uniformly in $x$ and $y$.  That is,
	$$
		\sup_{x,y\in \R^n}\bigg|
		\int_0^\infty e^{it\lambda^2} \lambda \chi(\lambda)\bigg[\sum_{k=0}^{2m+1}(-1)^k\big\{
		R_0^+(VR_0^+)^k
		-R_0^-(VR_0^-)^k\big\}\bigg](\lambda^2)(x,y)\, 
		d\lambda\bigg| \les |t|^{-\f n2}.
	$$

\end{prop}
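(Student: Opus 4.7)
The strategy is to handle the $k=0$ term in~\eqref{eq:finitebs} separately from the $k\geq 1$ terms. For $k=0$, the integrand in~\eqref{Stone} is exactly the kernel of the low-frequency piece of the free Schr\"odinger propagator; the difference $R_0^+(\lambda^2)-R_0^-(\lambda^2)$ is smooth in $\lambda$ and of order $\lambda^{n-2}$ near zero by~\eqref{Gn poly}, so a classical stationary phase argument in $\lambda$ (with stationary point at $\lambda = -|x-y|/(2t)$) yields the bound $|t|^{-n/2}$ uniformly in $x,y$.

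For each fixed $k\in\{1,\ldots,2m+1\}$, I would first apply the algebraic identity~\eqref{alg fact} to write the $+/-$ difference of $R_0^\pm(VR_0^\pm)^k$ as a sum of $k+1$ summands, each containing exactly one distinguished factor $R_0^+(\lambda^2)-R_0^-(\lambda^2)$. This factor is smooth and $\widetilde O(\lambda^{n-2})$, so the singular $\lambda \to 0$ behavior of the resolvent is removed. Each remaining $R_0^\pm$ factor is expanded using~\eqref{Gn poly}: either as a Taylor polynomial in $\lambda$ when $\lambda|z_j-z_{j+1}|\lesssim 1$, or as a pure oscillatory kernel $\lambda^{(n-3)/2}|z_j-z_{j+1}|^{-(n-1)/2}e^{\pm i\lambda|z_j-z_{j+1}|}$ together with lower-order corrections. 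After substitution, the contribution of each summand to~\eqref{Stone} becomes a finite sum of oscillatory $\lambda$-integrals of the form
$$\int_0^\infty e^{it\lambda^2 \pm i\lambda \Phi}\,\lambda^{n-1}\,a(\lambda,z)\,\chi(\lambda)\,d\lambda,$$
with $\Phi=\sum_j \pm|z_j-z_{j+1}|$ and $a$ a slowly varying amplitude, integrated against $\prod_{j=1}^{k} V(z_j)\,dz_j$.

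To extract $|t|^{-n/2}$ decay I would integrate by parts in $\lambda$ using $e^{it\lambda^2}=(2it\lambda)^{-1}\partial_\lambda e^{it\lambda^2}$ a total of $(n-1)/2$ times, each step gaining a factor $(t\lambda)^{-1}$, and then invoke the stationary phase bound of Lemma~\ref{lem:statphase} for the final $|t|^{-1/2}$. Derivatives that fall on a resolvent are controlled through the recurrence of Lemma~\ref{recurrence}, which either divides by $\lambda$ or multiplies by $|z_j-z_{j+1}|$; the intermediate spatial integrals against $V(z_j)$ are made bounded using Lemma~\ref{lem:iterated} once $m$ is chosen with $m+1\geq (n-3)/4$, and the decay hypothesis on $V$ absorbs the polynomial weights $|z_j-z_{j+1}|^{\ell_j}$ that arise.

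The main obstacle is maintaining uniformity in $x$ and $y$: when all $(n-1)/2$ derivatives pile onto the leading (or lagging) resolvent $R_0^\pm(\lambda^2)(x,z_1)$, naive integration by parts produces an unwanted factor of $|x|$ (respectively $|y|$). To circumvent this I would use the Yajima-style shift of the stationary phase point employed in Lemma~\ref{lem:Thigh}: write $e^{it\lambda^2}R_0^\pm(\lambda^2)(x,z_1)$ as $e^{it(\lambda-\lambda_0)^2}$ times a tamer amplitude with $\lambda_0=-|x|/(2t)$, and apply Lemma~\ref{lem:statphase} directly; the discrepancy between $|x-z_1|$ and $|x|$ costs only a factor $\la z_1\ra$, absorbed by $V(z_1)$. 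Since~\eqref{eq:finitebs} has only finitely many ($m$-dependent) terms, summing yields the claimed uniform bound.
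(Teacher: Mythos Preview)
Your strategy is correct and matches the paper's proof in all essential respects: separate treatment of $k=0$, extraction of the $\lambda^{n-2}$ smallness from the $+/-$ cancellation, $(n-1)/2$ integrations by parts via the dimension-reduction recurrence, and the Yajima-style shift of the stationary point (Lemma~\ref{lem:statphase}) to avoid weights when all derivatives land on the leading or lagging resolvent.

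There is one organizational difference worth noting. For $k\geq 1$ you invoke the telescoping identity~\eqref{alg fact} to isolate a single factor $R_0^+-R_0^-$; the paper instead expands the full product as
\[
(R_0^\pm V)^k R_0^\pm \;=\; K_0+\lambda^2 K_2+\cdots+\lambda^{n-3}K_{n-3}+E_k^\pm(\lambda),
\]
with each $K_j$ real-valued, so that the $+/-$ difference collapses directly to $E_k^+-E_k^-=\widetilde O_{(n-1)/2}(\lambda^{n-2})$. This packaging avoids tracking $k+1$ separate summands and makes the derivative bookkeeping cleaner, but your route arrives at the same place.

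One small correction: Lemma~\ref{lem:iterated} gives weighted $L^2$ control of iterated resolvents, which is the wrong currency here---you need \emph{pointwise} uniformity in both $x$ and $y$. The paper handles the spatial integrals directly via Lemma~\ref{EG:Lem} and Corollary~\ref{EGcor}, together with an arithmetic--geometric mean trick to redistribute the powers $|z_j-z_{j+1}|^{\alpha_j}$ arising from differentiation. You should substitute that argument for your appeal to Lemma~\ref{lem:iterated}.
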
  

For the first term of the Born
series, when $k=0$ in \eqref{eq:finitebs}, we define
    $$\mathcal G_1(\lambda,r)=C_1\frac{e^{i\lambda r}}{\lambda}.$$   
We then note the identity
    \begin{lemma}\label{recurrence}
            For $n\geq 3$ and odd, the following recurrence relation holds.
        \begin{align*}
            \left(\frac{1}{\lambda}\frac{d}{d\lambda}\right) \mathcal G_n       
            (\lambda, r)
            =\frac{1}{2\pi}\mathcal G_{n-2}(\lambda,r).
        \end{align*}
    
    \end{lemma}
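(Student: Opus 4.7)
The plan is to derive the recurrence directly by expressing $\mathcal{G}_n$ in the Hankel form~\eqref{Hankel} and invoking a standard Bessel identity. Setting $\nu = \frac{n}{2}-1$, one has $\mathcal{G}_n(\lambda, r) = \frac{i}{4(2\pi)^\nu} \frac{\lambda^\nu}{r^\nu} H^{(1)}_\nu(\lambda r)$, so the full $\lambda$-dependence is carried by the single factor $\lambda^\nu H^{(1)}_\nu(\lambda r)$. I would then apply the classical Bessel recurrence $\frac{d}{dz}[z^\nu H^{(1)}_\nu(z)] = z^\nu H^{(1)}_{\nu-1}(z)$, together with the chain rule $\frac{d}{d\lambda} = r\frac{d}{dz}$ at $z = \lambda r$, to obtain
\begin{equation*}
\frac{1}{\lambda}\frac{d}{d\lambda}\big[\lambda^\nu H^{(1)}_\nu(\lambda r)\big] = \lambda^{\nu-1} r\, H^{(1)}_{\nu-1}(\lambda r).
\end{equation*}

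Substituting this back, the recurrence reduces to matching
\begin{equation*}
\frac{1}{\lambda}\frac{d}{d\lambda}\mathcal{G}_n(\lambda,r) = \frac{i}{4(2\pi)^\nu}\, \frac{\lambda^{\nu-1}}{r^{\nu-1}}\, H^{(1)}_{\nu-1}(\lambda r)
\end{equation*}
against the Hankel representation of $\mathcal{G}_{n-2}$ with index $\nu-1$; the only discrepancy between the two prefactors is the single factor $(2\pi)^{\nu-1}/(2\pi)^{\nu} = 1/(2\pi)$, which produces exactly the claimed identity. This treats odd $n \geq 5$ uniformly. As an alternative route, one may verify the same conclusion from the polynomial form~\eqref{Gn poly} by checking the algebraic identity $iP_n(u) + P_n'(u) = 2u\, P_{n-2}(u)$ for the polynomial $P_n(u) = \sum_\ell \frac{(n-3-\ell)!}{\ell!(\frac{n-3}{2}-\ell)!}(-2iu)^\ell$; the coefficient match there follows from the elementary ratio identity $\frac{n-3-\ell}{(\frac{n-3}{2}-\ell)!} - \frac{2}{(\frac{n-5}{2}-\ell)!} = \frac{2\ell/(n-3-2\ell)}{(\frac{n-5}{2}-\ell)!}$ combined with a one-step index shift, and is routine but bookkeeping-heavy.

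The base case $n=3$ must be handled separately, because the polynomial representation~\eqref{Gn poly} degenerates into an empty sum at index $n-2 = 1$ and $\mathcal{G}_1$ is instead defined by hand as $C_1 e^{i\lambda r}/\lambda$. Direct computation yields $\frac{1}{\lambda}\frac{d}{d\lambda}\mathcal{G}_3 = \frac{i}{4\pi\lambda}e^{i\lambda r}$, which matches $\frac{1}{2\pi}\mathcal{G}_1$ precisely when $C_1 = i/2$, consistent with the standard one-dimensional free resolvent kernel $\frac{i}{2\lambda}e^{i\lambda |x-y|}$. There is no serious obstacle in this proof: the whole argument is a chain-rule computation, and the only care required is tracking the constants $(2\pi)^\nu$ consistently as the dimension index drops from $n$ to $n-2$.
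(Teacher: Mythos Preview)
Your proposal is correct and follows essentially the same approach as the paper: the paper's proof simply cites the Hankel recurrence relations together with representation~\eqref{Hankel}, and mentions the polynomial form~\eqref{Gn poly} as an alternative route, both of which you carry out in detail. Your handling of the base case $n=3$ via the ad hoc definition of $\mathcal{G}_1$ is also appropriate and consistent with how the paper sets things up.
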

    
    \begin{proof}
        The proof follows from the recurrence relations of the Hankel functions,
        found in \cite{AS} and the representation of the kernel given in
        \eqref{Hankel}. One can also prove this (with a fixed constant instead 
        of $2\pi$) directly using \eqref{Gn poly}.
    \end{proof}

This `dimension reduction' identity says that, up to a constant factor, the operation of $\frac{1}{\lambda}\frac{d}{d\lambda}$ takes an $n$ dimensional free resolvent to an $n-2$ dimensional
free resolvent.

\begin{lemma}\label{lem:bstn2}

	We have the bound
	$$
		\sup_{x,y\in \R^n}
		\bigg|\int_0^\infty e^{it\lambda^2} \lambda
		\chi(\lambda) [R_0^+(\lambda^2)-R_0^-(\lambda^2)]
		(x,y)\, d\lambda\bigg| \les |t|^{-\f n2}.
	$$

\end{lemma}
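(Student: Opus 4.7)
The plan is to exploit the ``dimension reduction'' identity of Lemma \ref{recurrence} to integrate by parts repeatedly until the integrand has the form of a one-dimensional Schr\"odinger kernel, for which a van der Corput bound gives the desired $|t|^{-1/2}$ estimate. Writing $r=|x-y|$ and using that $R_0^-(\lambda^2)(x,y)=\overline{R_0^+(\lambda^2)(x,y)}=\mathcal G_n(-\lambda,r)$, the integrand $\lambda\chi(|\lambda|)[\mathcal G_n(\lambda,r)-\mathcal G_n(-\lambda,r)]$ is even in $\lambda$. After substituting $\lambda\mapsto-\lambda$ in the $\mathcal G_n(-\lambda,r)$ piece, the integral in question equals
$$
I(x,y,t) := \int_{-\infty}^{\infty} e^{it\lambda^2}\lambda\chi(|\lambda|)\mathcal G_n(\lambda,r)\,d\lambda.
$$
The integrand is smooth and compactly supported in $\lambda$, so all subsequent integrations by parts have vanishing boundary terms.

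Using $e^{it\lambda^2}\lambda=(2it)^{-1}\frac{d}{d\lambda}e^{it\lambda^2}$, each integration by parts converts $\int e^{it\lambda^2}\lambda f(\lambda)\,d\lambda$ into $-(2it)^{-1}\int e^{it\lambda^2}\lambda\cdot[\frac{1}{\lambda}\frac{d}{d\lambda}f(\lambda)]\,d\lambda$. Iterating this $\frac{n-1}{2}$ times starting from $f_0(\lambda)=\chi(|\lambda|)\mathcal G_n(\lambda,r)$ produces
$$
I=\left(-\frac{1}{2it}\right)^{\frac{n-1}{2}}\int_{-\infty}^{\infty} e^{it\lambda^2}\lambda\left(\frac{1}{\lambda}\frac{d}{d\lambda}\right)^{\frac{n-1}{2}}\!\!\bigl[\chi(|\lambda|)\mathcal G_n(\lambda,r)\bigr]d\lambda.
$$
By Lemma \ref{recurrence}, each application of $\frac{1}{\lambda}\frac{d}{d\lambda}$ converts $\mathcal G_{n-2k}(\lambda,r)$ into $(2\pi)^{-1}\mathcal G_{n-2k-2}(\lambda,r)$; since $\chi$ is constant near zero, the operator is well-defined at $\lambda=0$ at every intermediate step, because the apparent $1/\lambda$ is absorbed by the vanishing of $\partial_\lambda \mathcal G_{n-2k}$ at the origin.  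At the top of the induction the principal term is $c_n\chi(|\lambda|)\mathcal G_1(\lambda,r)=c_n C_1\chi(|\lambda|)e^{i\lambda r}/\lambda$, whose singularity is cancelled by the leftover factor of $\lambda$ in the integrand.  All remaining Leibniz terms---those in which at least one derivative falls on $\chi(|\lambda|)$---are smooth and compactly supported in $\{|\lambda|\sim\lambda_1\}$ uniformly in $r$.

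The principal contribution therefore reduces to a constant multiple of
$$
t^{-\frac{n-1}{2}}\int_{-\infty}^{\infty} e^{it\lambda^2+i\lambda r}\chi(|\lambda|)\,d\lambda.
$$
The phase $\phi(\lambda)=t\lambda^2+r\lambda$ satisfies $\phi''\equiv 2t$, so van der Corput (equivalently, completing the square $t\lambda^2+r\lambda=t(\lambda+r/(2t))^2-r^2/(4t)$ and applying the standard Fresnel bound) controls this integral by $|t|^{-1/2}$ uniformly in $r$.  Combined with the prefactor $|t|^{-(n-1)/2}$, this yields the desired $|t|^{-n/2}$.  The error terms arising from $\chi$-derivatives are supported away from $\lambda=0$, so one additional integration by parts against $e^{it\lambda^2}$ using $|\lambda|\gtrsim\lambda_1$ yields any desired extra negative power of $|t|$.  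The main bookkeeping obstacle is verifying that iterating $\frac{1}{\lambda}\frac{d}{d\lambda}$ is legitimate without creating a $1/\lambda$ singularity at each intermediate step, and that the Leibniz by-products remain uniformly controlled in $r$; Lemma \ref{recurrence} resolves this cleanly by identifying the iterated operator with a lower-dimensional (still smooth) resolvent kernel.
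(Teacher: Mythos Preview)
Your approach is essentially the paper's: repeated integration by parts via the dimension-reduction identity of Lemma~\ref{recurrence}, followed by a one-dimensional oscillatory integral estimate for the final $|t|^{-1/2}$. The full-line reformulation and van der Corput (in place of the paper's half-line argument with Parseval) are cosmetic variations.

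There is one small gap in your treatment of the Leibniz terms. For $j\ge 1$ these are constant multiples of $\lambda\,(D^j\chi)(|\lambda|)\,\mathcal G_{2j+1}(\lambda,r)$, supported in $\{|\lambda|\sim\lambda_1\}$, but \emph{pointwise} they are not uniformly bounded in $r=|x-y|$ as $r\to 0$, since $|\mathcal G_m(\lambda,r)|\sim r^{2-m}$ there. Your assertion that ``one additional integration by parts \ldots\ yields any desired extra negative power of $|t|$'' presupposes uniform-in-$r$ control of the integrand, which you have not justified. The fix is immediate on the full line: by the same parity argument you used at the outset ($D^j\chi$ is even and $\mathcal G_m(-\lambda,r)=\overline{\mathcal G_m(\lambda,r)}$), each Leibniz contribution folds back to a half-line integral involving $2i\,\mathrm{Im}\,\mathcal G_{2j+1}$, and the imaginary part \emph{is} uniformly bounded as $r\to 0$. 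Alternatively---and this is what the paper does---work on $[0,\infty)$ with the difference $R_0^+-R_0^-$ throughout; then the required uniform-in-$r$ bounds for the $\chi$-derivative terms are read off directly from Lemma~\ref{lem:R0exp}.
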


\begin{proof}

We note that, by Lemma~\ref{lem:R0exp} we can safely
integrate by parts $\frac{n-1}{2}$ times without
boundary terms or growth in $|x-y|$.  We consider the
case when all derivatives act  on the resolvents.
In this case, using Lemma~\ref{recurrence}, we have
to bound
\begin{align*}
	\bigg|\frac{1}{(2it)^{\frac{n-1}{2}}}\int_0^\infty
	e^{it\lambda^2} &\chi(\lambda) \lambda\bigg(
	\frac{1}{\lambda}\frac{d}{d\lambda}
	\bigg)^{\frac{n-1}{2}} [R_0^+(\lambda^2)-
	R_0^-(\lambda^2)](x,y)\, d\lambda \bigg|\\
	&\les \frac{1}{|t|^{\frac{n-1}{2}}} 
	\bigg| \int_0^\infty
	e^{it\lambda^2} \chi(\lambda)(e^{i\lambda |x-y|}
	-e^{-i\lambda |x-y|})
	\, d\lambda\bigg| \les |t|^{-\f n2}.
\end{align*}
Here the last half power of time decay follows from
Parseval and the facts that $\| \chi^\vee(\cdot \pm |x-y|)\|_1 \les 1$ uniformly in $x$ and $y$
and $\|\widehat{e^{it(\cdot)^2}}\|_\infty \les |t|^{-\f 12}$.

In the case in which one (or more) derivatives act on
the cut-off we can integrate by parts at least 
$\frac{n+1}{2}$ times.  The derivative bounds in 
Lemma~\ref{lem:R0exp} hold without any growth in
$|x-y|$ since at most $\frac{n-1}{2}$ derivatives act
on the error term and $\chi^{(k)}(\lambda)$ is supported on the
set $\lambda \approx 1$.

\end{proof}

\begin{lemma}\label{lem:BS}

	For $k\geq 1$,
	we have the bound
	\begin{align*}
		\sup_{x,y\in \R^n}\bigg|
		\int_0^\infty e^{it\lambda^2}\lambda \chi(\lambda)
		[(R_0^+(\lambda^2)V)^k R_0^+(\lambda^2)
		-(R_0^-(\lambda^2)V)^k R_0^-(\lambda^2)](x,y)\, 
		d\lambda\bigg| \les |t|^{-\f n2},
	\end{align*}
	provided $|V(x)|\les \la x\ra^{-\frac{n+3}{2}-}$.

\end{lemma}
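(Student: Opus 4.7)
The plan is to extend the integration-by-parts argument of Lemma~\ref{lem:bstn2} from a single resolvent to the full product. First, apply the algebraic identity \eqref{alg fact} to $(R_0^+V)^kR_0^+ - (R_0^-V)^kR_0^-$, writing it as a sum of $k+1$ terms, each carrying the $+/-$ difference in exactly one resolvent factor while the other resolvents are of a single sign. It suffices to bound each such summand in $L^\infty_{x,y}$ by $|t|^{-n/2}$. Fix one and integrate by parts in $\lambda$ exactly $N := \frac{n-1}{2}$ times, using $e^{it\lambda^2} = \frac{1}{2it}Le^{it\lambda^2}$ with $L = \frac{1}{\lambda}\frac{d}{d\lambda}$. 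Since $L$ is a derivation, $L^N$ distributes multinomially by Leibniz over $\chi(\lambda)$ and the $k+1$ free resolvent kernels $\mathcal{G}_n(\pm\lambda, r_j)$, where $r_j = |z_j - z_{j+1}|$ with $z_0 = x$, $z_{k+1} = y$. By Lemma~\ref{recurrence}, $L^{m_j}\mathcal{G}_n(\pm\lambda, r_j)$ is a constant multiple of $\mathcal G_{n-2m_j}(\pm\lambda, r_j)$ for any $m_j \leq N$, which is automatic since $\sum_j m_j \leq N$. At most one index can reach $m_j = N$, so the $\mathcal{G}_1 \sim e^{\pm i\lambda r}/\lambda$ factor appears at most once and its $1/\lambda$ is absorbed by the outer $\lambda$ of the Stone formula. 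Terms where some derivatives fall on $\chi$ are easier since $\chi^{(m)}$ is supported in $\lambda \approx \lambda_1$, away from zero; no boundary terms arise.

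Expanding each reduced $\mathcal G_{n_j}$ via \eqref{Gn poly} writes the integrand as a finite sum of expressions of the form
\begin{equation*}
\frac{1}{(2it)^N}\,\chi(\lambda)\,e^{it\lambda^2 + i\lambda R}\lambda^{M}\prod_{j=0}^k r_j^{\alpha_j},
\end{equation*}
with $R = \sum_j \pm r_j$, $M \geq 0$, and crucially every $\alpha_j \leq 0$. The non-positivity is the essential output of the dimension reduction: the leading $(\lambda r)$-monomial of $\mathcal G_{n_j}$ scales like $r_j^{-(n_j-1)/2}$ and all lower-order monomials have strictly smaller $r$-exponents. As in Lemma~\ref{lem:bstn2}, Parseval bounds the resulting $\lambda$-integral by $\|\widehat{e^{it\lambda^2}}\|_\infty\,\|(\lambda^M\chi)^{\vee}\|_1 \les |t|^{-1/2}$ uniformly in $R$, and combining with the prefactor $|t|^{-N}$ produces the target time decay $|t|^{-n/2}$.

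It remains to show that the purely spatial chain integral
\begin{equation*}
\int_{\R^{nk}} \prod_{j=1}^k|V(z_j)|\prod_{j=0}^k |z_j - z_{j+1}|^{\alpha_j}\,dz_1\cdots dz_k
\end{equation*}
is bounded uniformly in $x, y$ for every admissible tuple $\vec\alpha$. The worst configuration has every link at Coulomb size $|z_j - z_{j+1}|^{2-n}$, and the hypothesis $|V(x)| \les \la x\ra^{-(n+3)/2-}$ is calibrated exactly so that iterated fractional-integration estimates of the sort underlying Lemma~\ref{lem:iterated} converge. The main technical obstacle lies here: passing from the weighted-$L^2$ chain bound supplied by Lemma~\ref{lem:iterated} to the uniform pointwise bound needed for an $L^1 \to L^\infty$ estimate, while absorbing any growth transmitted from the free endpoints $x, y$ through the intermediate potentials. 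This requires careful bookkeeping link by link, but becomes routine once the $\alpha_j$ are known to be non-positive and the chosen decay rate of $V$ is seen to dominate the Coulomb singularities on each side of every vertex.
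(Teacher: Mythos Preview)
Your central observation---that after distributing $L^N$ over the resolvent factors via Leibniz and then expanding each reduced $\mathcal G_{n_j}$ through \eqref{Gn poly}, every spatial exponent satisfies $\alpha_j\le 0$---is correct and is an elegant way to avoid the stationary-phase shift (Lemma~\ref{lem:statphase}) that the paper invokes. For the terms where all $N$ derivatives land on resolvent factors this does work: one factor is reduced to $\mathcal G_1$ (so $\alpha=0$), the chain breaks, and the spatial integral is uniformly bounded by iterating Lemma~\ref{EG:Lem}. (A minor point: the Parseval step as written is not quite right for the half-line integral, since $\chi(\lambda)\lambda^M\mathbf 1_{\lambda>0}$ need not have $L^1$ Fourier transform; replace it by the second-derivative van~der~Corput estimate, which gives $|t|^{-1/2}$ uniformly in $R$.)

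The real gap is in the case you dismiss as ``easier,'' where some of the $N$ applications of $L$ fall on $\chi$. There the resolvent factors are \emph{not} fully dimension-reduced, and your monomial expansion via \eqref{Gn poly} produces terms in which every link carries the full Coulomb power $\alpha_j=-(n-2)$. For $k=1$ the resulting spatial integral is $\int |V(z_1)|\,|x-z_1|^{2-n}|z_1-y|^{2-n}\,dz_1\sim|x-y|^{4-n}$, which blows up as $x\to y$ and cannot be traded against extra time decay. The paper confronts exactly this issue (see the paragraph beginning ``If one uses the first term in the bound~\eqref{eq:R0bs} exclusively\ldots'') and resolves it by first subtracting the even Taylor terms $K_0,\dots,K_{n-3}$ so that only $E_k^\pm(\lambda)=\widetilde O(\lambda^{n-2})$ survives; this redistributes $n-2$ powers of local regularity somewhere in the chain. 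Equivalently, the $+/-$ difference factor $\mathcal G_n(\lambda,r)-\mathcal G_n(-\lambda,r)$ is $O(1)$ uniformly in $r$ for bounded $\lambda$, which breaks the chain---but your term-by-term monomial expansion splits it into $e^{\pm i\lambda r}/r^{n-2}$ pieces and destroys this cancellation. To close the argument you must either keep the difference factor unexpanded and use its boundedness in $r$, or follow the paper and work with $E_k^+-E_k^-$ from the start.
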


\begin{proof}

We first note that, by Lemma~\ref{lem:R0exp}, we have
\begin{align}\label{eq:ek}
	(R_0^\pm(\lambda^2)V)^k R_0^\pm(\lambda^2)
	=K_0+\lambda^2 K_2+\dots +\lambda^{n-3} K_{n-3}
	+E_k^\pm(\lambda)
\end{align}
Here $K_j$ are real-valued, absolutely bounded operators.
One can identify these explicitly, for instance
$$
	K_0=G_0(VG_0)^{k}, \qquad
	K_2=\sum_{j=0}^k (G_0V)^jG_2(VG_0)^{k-j},
$$
though this is not important in our analysis.
Further, $E_k^{\pm}(\lambda)$ satisfy the bounds
\begin{align}
	\big|\partial_\lambda^j E_k^\pm(\lambda)\big|
	\les \lambda^{n-2-j}, \qquad j=0,1,\dots, \frac{n-1}{2}.
\end{align}
That is to say $E_k^\pm(\lambda)=\widetilde O_{\frac{n-1}{2}} (\lambda^{n-2})$.  One can also
write $E_k^\pm(\lambda)=\la x\ra \la y \ra 
\widetilde O_{\frac{n+1}{2}} (\lambda^{n-2})$ to see
that one can attain the $|t|^{-\f n2}$ decay rate as
an operator between weighted spaces.
The $\frac{n+1}{2}$st derivative requires slightly 
more care to avoid spatial weights.  

We need to bound
\begin{multline*}
	\int_0^\infty e^{it\lambda^2}\lambda \chi(\lambda)
	[(R_0^+(\lambda^2)V)^k R_0^+(\lambda^2)
	-(R_0^-(\lambda^2)V)^k R_0^-(\lambda^2)](x,y)\, 
	d\lambda\\
	=\int_0^\infty e^{it\lambda^2} \lambda \chi(\lambda)
	[E_k^+(\lambda)-E_k^-(\lambda)](x,y)\, d\lambda.
\end{multline*}
The error bounds on $E_k^{\pm}(\lambda)$ formally 
allow us to integrate by part $\frac{n-1}{2}$ times
to bound
\begin{align*}
	\frac{1}{(2it)^{\frac{n-1}{2}}} \int_0^\infty
	e^{it\lambda^2}\lambda \bigg(\frac{1}{\lambda}
	\frac{d}{d\lambda}\bigg)^{\frac{n-1}{2}} \chi(\lambda)
	[E_k^+(\lambda)-E_k^-(\lambda)](x,y)\, d\lambda.
\end{align*}
As in the free case, if at least one derivative acts on the cut-off,
the unweighted bound is clear.  
On the other hand, if all the derivatives
act on the error functions, we need only worry (about
weights) if all the derivatives act on either the
first (leading) resolvent 
$R_0^\pm(\lambda^2)(x,z_1)$ 
or the last (lagging) resolvent
$R_0^\pm(\lambda^2)(z_k,y)$.
Without loss of generality, we consider the first case
and no longer count on the `+/-' cancellation.
Then, via the `dimension reduction' identity, 
Lemma~\ref{recurrence}, we have to bound
$$
	\frac{1}{(2it)^{\frac{n-1}{2}}}
	\int_0^\infty e^{it\lambda^2} e^{\pm i\lambda|x-z_1|}
	V(R_0^\pm (\lambda^2)V)^{k-1} R_0^\pm (\lambda^2)(z_1,y)
	\, d\lambda	
$$
To see this, we note that the first resolvent contributes
$$
	R_0^\pm (\lambda^2)(x,z_1)-G_0-\lambda^2 G_2-
	\dots -\lambda^{n-3}G_{n-3}
$$
to $E_k^\pm (\lambda)$.  After $\frac{n-1}{2}$ integration
by parts, the contribution is
\begin{multline*}
	\lambda \bigg(\frac{1}{\lambda}
	\frac{d}{d\lambda}\bigg)^{\frac{n-1}{2}}
	\bigg[R_0^\pm (\lambda^2)(x,z_1)-G_0-\lambda^2 G_2-
	\dots -\lambda^{n-3}G_{n-3}\bigg]\\
	=\lambda \bigg(\frac{1}{\lambda}
	\frac{d}{d\lambda}\bigg)^{\frac{n-1}{2}}\bigg[
	R_0^\pm  (\lambda^2)(x,z_1)\bigg]
	=C_1 e^{\pm  i\lambda |x-z_1|}.
\end{multline*}
Here we use the technique of moving the stationary
point of the phase to integrate by parts
another time.  Without loss of generality we take
$t>0$, the case of $t<0$ follows with minor adjustments.
In particular, we have to extract another
$t^{-\f12}$ from
\begin{align*}
	\int_0^\infty e^{it\lambda^2\mp i\lambda|x|}& e^{\pm  i\lambda(|x-z_1|-|x|)}
	V(R_0^\pm (\lambda^2)V)^{k-1} R_0^\pm (\lambda^2)(z_1,y)
	\, d\lambda\\
	&= 	\int_0^\infty e^{it\lambda^2\mp i\lambda|x|} 
	e^{\pm i\lambda(|x-z_1|-|x|)}
	\widetilde O_1(1)	\, d\lambda
\end{align*}
Where the bounds on $V(R_0^\pm (\lambda^2)V)^{k-1} R_0^\pm (\lambda^2)$ follow from Lemma~\ref{lem:R0exp}.
We then break up the integral into two pieces, on
$0<\lambda<t^{-\f 12}$ the extra $t^{-\f12}$ decay is
easy to see.  
Using Lemma~\ref{lem:statphase} with
$$
	F(\lambda,x,z_1)=e^{-i|x|^2/4t^2}
	e^{\pm i\lambda(|x-z_1|-|x|)} \widetilde O_1(1),
$$
we can gain the extra $t^{-\f12}$ time decay on the
remaining piece $t^{-\f12}<\lambda<1$.
We can see that $\sup_\lambda |F(\lambda,x,z_1)|\les 1$
and $|\partial_\lambda F(\lambda,x,z_1)| \les \la z_1 \ra 
+\lambda^{-1}$.  So that
\begin{align*}
	t^{-\f34}\bigg(\int_{t^{-\f12}}^1 |\partial_\lambda F(\lambda,x,z_1)|^2\, d\lambda\bigg)^{\f12} &\les
	t^{-\f34}\bigg(\int_{t^{-\f12}}^1 \la z_1\ra^2 +
	\lambda^{-2}\, d\lambda	\bigg)^{\f12}
	\les t^{-\f34}\bigg(\la z_1\ra^2+t^{\f12}
	\bigg)^{\f12}\\
	&\les t^{-\f34}\la z_1 \ra +t^{-\f12}\les
	t^{-\f12} \la z_1 \ra .
\end{align*}
The growth in $z_1$ can be absorbed by decay of the
potential $V(z_1)$.

The choice of decay rate on the potential is chosen
to control the spatial integrals which arise in the iteration of resolvents.
We note that
\begin{align}\label{eq:R0bs}
	|\partial_\lambda^j R_0^\pm(\lambda^2)(x,y)| \les 
	|x-y|^{j+2-n}+\lambda^{\frac{n-3}{2}}
	|x-y|^{j+\frac{1-n}{2}}.
\end{align}
The bounds here are developed in the proof of
Lemma~\ref{lem:R0exp}.
The decay required on the potential is dictated by
the second term.  Using this as our primary bound, the terms in the integral of kernel of $\partial_{\lambda}^{\frac{n+1}{2}}(R_0^\pm(\lambda^2)V)^kR_0^\pm(\lambda^2)$ which require the most decay from
the potential are of the form
$$
	\int_{\R^{kn}}\frac{1}{|x-z_1|^{\frac{n-1}{2}-\alpha_0}}
	\prod_{j=1}^k \frac{V(z_j)}{|z_j-z_{j+1}|^{\frac{n-1}{2}-\alpha_j}}
	d\vec{z},
$$
where $\alpha_j\in \mathbb N_0$ and $\sum \alpha_j=\frac{n+1}{2}$, $z_{k+1}=y$ and
$d\vec z=dz_1\, dz_2\, \cdots \, dz_k$.
(There is of course the caveat  that
if say $\alpha_0=\frac{n+1}{2}$ the last power of 
$|x-z_1|$ is really replaced with $\la z_1\ra $, and
similarly if $\alpha_k=\frac{n+1}{2}$, the 
last $|z_k-y|$ is replaced by $\la z_k \ra$.)
Arithmetic-geometric mean inequalities allow us to consider instead the integral
\begin{align*}
	\int_{\R^{kn}}\frac{1}{|x-z_1|^{\frac{n-1}{2}}}
	\prod_{j=1}^k \frac{V(z_j)}{|z_j-z_{j+1}|^{\frac{n-1}{2}}}
	\bigg(&\la z_1\ra |x-z_1|^{\frac{n-1}{2}}\\
	&+
	\sum_{\ell=2}^{k-1} |z_\ell-z_{\ell+1}|^{\frac{n+1}{2}} +\la z_k\ra |z_k-y|^{\frac{n-1}{2}}
	\bigg)
	d\vec{z},
\end{align*}
as the quantity in parentheses dominates any product of $|z_\ell - z_{\ell+1}|$ of order $\frac{n+1}{2}$.
Choose a representative element from the summation over $\ell$, this negates a factor of
$|z_\ell - z_{\ell+1}|^{(1-n)/2}$ in the product
and replaces it with $|z_\ell - z_{\ell+1}| \les \la z_\ell\ra\la z_{\ell+1}\ra$.  Then we may consider
$$
	\int_{\R^{kn}}\biggl(\frac{1}{|x-z_1|^{\frac{n-1}{2}}}
	\prod_{j=1}^{\ell-1} \frac{V(z_j)}{|z_j-z_{j+1}|^{\frac{n-1}{2}}}\la z_\ell\ra\biggr)
	\biggl(\la z_{\ell+1}\ra \prod_{j=\ell}^k \frac{V(z_j)}{|z_j - z_{j+1}|^{\frac{n-1}{2}}}\biggr) 
	d\vec{z}.
$$
Assuming $|V(z)| \les \la z\ra^{-\beta}$, the integral over $z_\ell$ takes the form
\begin{align}\label{eq:simpiterated}
	\int_{\R^n} \frac{\la z_\ell \ra^{1-\beta}}{|z_{\ell-1}-z_\ell|^{\frac{n-1}{2}}}\, dz_\ell.
\end{align}
If $\beta>\frac{n+3}{2}$, using Lemma~\ref{EG:Lem} it is clear that
$$
	\sup_{z_{\ell-1}\in \R^n} |\eqref{eq:simpiterated}|
	\les 1.
$$
After this, the integral over $z_{\ell-1}$ can be bounded by~\eqref{eq:simpiterated}, then
the integral over $z_{\ell-2}$ and so on.  The integrals over $z_{\ell+1}, z_{\ell+2}, \ldots, z_k$ are treated
in an identical manner so the the entire integral with respect to $\vec{z}$ is bounded uniformly in $x$ and $y$.


If one uses the first term in the bound~\eqref{eq:R0bs} exclusively, there are some problems with
local singularities due to the fact that, for example
by Corollary~\ref{EGcor},
$$
	\int_{\R^n}\frac{V(z)}{|x-z|^{n-2}|z-y|^{n-2}}\,dz \les |x-y|^{4-n}
$$
may still be singular for small values of $|x-y|$.
Fortunately, the local singularities are better behaved than the expression \eqref{eq:R0bs} would indicate
due to the cancellation of the leading terms
of the expansion in \eqref{eq:ek}.  The spatial part of the
 terms that contribute to $E_k^{\pm}(\lambda)$ are of the form
$$
	G_{0+\alpha_0}(x,z_1)\prod_{j=1}^k G_{0+\alpha_j}(z_j,z_{j+1})V(z_j)
$$
with $\sum \alpha_j=n-2$ to account for the fact that $E_k^\pm$ is of order $\lambda^{n-2}$.
Collectively there is an improvement of $n-2$ powers of local
regularity.   Where $G_{n-2}$ appears in an expression it may be represent both $G_{n-2}$ and the error term $E_0^{\pm}(\lambda)$
from Lemma~\ref{lem:R0exp}, which are both bounded by $1$
with respect to the spatial variables.
 In the worst case with respect to the spatial singularities, all the $\lambda$ derivatives act on the cut-off
function $\chi$ rather than on resolvents.  Then for a fixed value of $\lambda$
we can bound their contribution by
\begin{align*}
	\int_{\R^{kn}}\frac{1}{|x-z_1|^{n-2}}
	\prod_{j=1}^k \frac{V(z_j)}{|z_j-z_{j+1}|^{n-2}}
	\bigg(& |x-z_1|^{n-2}\\
	&+
	\sum_{\ell=2}^{k-1} |z_\ell-z_{\ell+1}|^{n-2} + |z_k-y|^{n-2}
	\bigg)
	d\vec{z}.
\end{align*}
The quantity in parentheses dominates any product of $|z_\ell - z_{\ell+1}|$ of order $n-2$.
If $\beta>2$, we note that any representative term is controlled by the bound
\begin{align}\label{eq:simpiterated2}
	\sup_{z_{j-1}\in \R} \int_{\R^n} \frac{\la z_j \ra^{-\beta}}{|z_{j-1}-z_j|^{n-2}}\, dz_j \les 1
\end{align}
iterated $k$ times, again starting with $z_\ell$ and $z_{\ell+1}$.	Any other integrals with respect
to $\vec z$ can be bounded by a combination of the
two cases considered.
	
\end{proof}

We can now prove the main Theorem.

\begin{proof}[Proof of Theorem~\ref{thm:main}]

	We note that the Theorem is proven by bounding the
	oscillatory integral in the Stone formula
	\eqref{Stone},
\begin{align*}
	\bigg|
	\int_0^\infty e^{it\lambda^2} \lambda \chi(\lambda)
	[R_V^+(\lambda^2)-R_V^-(\lambda^2)](x,y))\, 
	d\lambda\bigg| \les_{x,y} |t|^{-\alpha}.
\end{align*}	
We begin by proving Part~(\ref{thmpart1}), where there is
no $x,y$ dependence.  The proof follows by 
expanding $R_V^\pm(\lambda^2)$ into the Born series
expansion, \eqref{eq:finitebs} and \eqref{eq:bstail}.
The contribution of \eqref{eq:finitebs} is bounded by
$|t|^{-\f n2}$ by Proposition~\ref{bsprop}, while
the contribution of \eqref{eq:bstail} is bounded
by $C_n|t|^{2-\f n2}P_eV1VP_e+O(|t|^{2-\f n2})$
by Lemma~\ref{lem:nocanc}.

To prove Part~(\ref{thmpart2}), one uses 
Lemma~\ref{lem:1canc} in the place of Lemma~\ref{lem:nocanc}
in the proof of Part~\ref{thmpart1}.  Finally,
Part~(\ref{thmpart3}) is proven by using
Lemma~\ref{lem:2canc}.

\end{proof}

We note that the proof of Theorem~\ref{thm:reg} is
actually simpler.  If zero is regular
$S_1=0$, so many terms drop out of the expansions.  In
addition, the expansion 
of $M^{\pm}(\lambda)^{-1}$ is of the same form
with respect to the spectral variable $\lambda$ as
$(M^{\pm}(\lambda)+S_1)^{-1}$ given in 
Lemma~\ref{M+S inverse} with different operators, see
Remark~\ref{rmk:reg}.
The dispersive bounds follow as in the analysis when
zero is not regular.

\section{Spectral characterization}\label{sec:Spec}

We prove a characterization of the spectral subspaces of
$L^2(\R^n)$ that are related to the invertibility of
certain operators in our expansions.  This characterization
is essentially Lemmas~5--7 of \cite{ES}
for three-dimensional Schr\"odinger operators modified as 
needed to fit higher spatial dimensions.  Similar
characterizations appear in Section~5 of \cite{EG} 
for two-dimensional
Schr\"odinger operators and Section~7 of 
\cite{EGG} for four
dimensions.  The proofs here are slightly simpler, as
we do not need to account for zero energy resonances.
In this section it is assumed  that $V$ is not identically zero.

In contrast the rest of the paper, in this section
$n\geq 5$ need not be odd.

\begin{lemma}\label{S characterization}

	Assume that $|V(x)|\les \la x\ra^{-2\beta}$ for some $\beta\geq 2$,
	$f\in S_1L^2(\R^n)\setminus\{0\}$ for
	$n\geq 5$ iff $f=wg$ for $g\in L^2\setminus\{0\}$ such that
	$-\Delta g+Vg=0$ in $\mathcal S'$.

\end{lemma}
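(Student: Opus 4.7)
The plan is to establish a direct bijection $g \leftrightarrow f = wg$, with inverse $f \mapsto g = -G_0(vf)$. This choice is motivated by the observation that any $L^2$ solution of $Hg = 0$ must satisfy $g = -G_0(Vg)$ distributionally (since $-\Delta g = -Vg$), which becomes $g = -G_0(vf)$ once we set $f = wg$ and use the factorization $V = vw$. Throughout I will use the relations $U^2 = \mathbf{1}$ (holding everywhere, since $U = +1$ on $\{V = 0\}$ by convention), $Uv = w$, $Uw = v$, and $vw = V$.

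For the easier direction $(\Leftarrow)$: given $g \in L^2 \setminus \{0\}$ with $-\Delta g + Vg = 0$ in $\mathcal{S}'$, set $f := wg \in L^2$. Then
\[
(U + vG_0 v)(wg) = Uwg + vG_0(vwg) = vg + vG_0(Vg) = v\, (g + G_0(Vg)) = 0,
\]
where the last equality uses $g = -G_0(Vg)$, obtained by applying $G_0$ to $-\Delta g = -Vg$. To see $f \neq 0$: if $wg = 0$, then multiplying by $U$ gives $vg = 0$, whence $Vg = vwg = 0$ and so $-\Delta g = 0$ in $\mathcal{S}'$. The only $L^2(\R^n)$ harmonic function is zero, contradicting $g \neq 0$.

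For the direction $(\Rightarrow)$: given $f \in S_1L^2 \setminus \{0\}$ with $(U + vG_0v)f = 0$, apply $U$ on the left and use $U^2 = \mathbf{1}$ and $Uv = w$ to obtain
\[
f = -w\, G_0(vf).
\]
Define $g := -G_0(vf)$, so that $f = wg$ is automatic and $g \neq 0$ whenever $f \neq 0$. Distributionally $-\Delta g = -vf$, while $Vg = vwg = vf$; adding these gives $-\Delta g + Vg = 0$. What remains is to show $g \in L^2(\R^n)$. Since $|v(x)| \les \langle x\rangle^{-\beta}$ with $\beta \geq 2$, H\"older places $vf \in L^{2n/(n+4)}$ (via $v \in L^{n/2}$), and then the Hardy--Littlewood--Sobolev inequality for the Riesz potential $G_0 = c_n I_2$, valid for $n \geq 5$, yields $g \in L^2$.

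The main obstacle is the last step: the $L^2$ membership of $g$ sits right at the borderline of the H\"older step when $\beta = 2$. A more robust alternative is to apply the weighted bound $G_0 : L^{2,\sigma} \to L^{2,-\sigma}$ for $\sigma > 1$ (Lemma 2.3 of \cite{Jen}) with $\sigma = \beta$ to first place $g$ in a weighted $L^2$ space, and then to bootstrap using $g = -G_0(Vg)$ together with the polynomial decay of $V$ and the square-integrability of $|x|^{2-n}$ at infinity for $n \geq 5$. This two-step argument recovers $g \in L^2$ under the stated decay hypothesis without relying on the borderline HLS endpoint.
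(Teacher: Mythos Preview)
Your argument follows the same approach as the paper: the bijection $f = wg$ with inverse $g = -G_0(vf)$, and the computations verifying $(U+vG_0v)f = 0 \Leftrightarrow (I+G_0V)g = 0$ are identical.  You also check $f \neq 0$ in the $(\Leftarrow)$ direction via the harmonic-function argument, which the paper omits.

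The only substantive difference is in establishing $g \in L^2$.  You correctly note that the HLS route is borderline at $\beta = 2$ (since $v \notin L^{n/2}$ there), and your proposed workaround---the symmetric bound $G_0: L^{2,\sigma} \to L^{2,-\sigma}$ followed by a bootstrap---is more involved than necessary and the bootstrap step is left vague.  The paper avoids this entirely by invoking the \emph{asymmetric} form of the same Jensen lemma you cite: $I_2: L^{2,s} \to L^{2,-s'}$ whenever $s, s' \geq 0$ and $s + s' \geq 2$.  Taking $s = \beta \geq 2$ and $s' = 0$ sends $vf \in L^{2,\beta}$ directly to $g \in L^2$ in one step, with no bootstrap needed.
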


\begin{proof}

	First note that
	\begin{align}\label{IG0V}
	(-\Delta+V)g=0 \Leftrightarrow (I+G_0V)g=0.
	\end{align}
	Now suppose $f\in S_1L^2\setminus\{0\}$, that is $f\neq 0$ and $f\in$ker$(U+vG_0v)$.  So that
	$$
	0=([U+vG_0v]f)(x)=U(x)f(x)+c_{0} v(x) \int_{\R^n} \frac{v(y)f(y)}{|x-y|^{n-2}}\, dy.
	$$
	Or,
	$$
	f(x)+c_{0} w(x) \int_{\R^n} \frac{v(y)f(y)}{|x-y|^{n-2}}\, dy=0.
	$$
	Let 
	$$
	g(x)=-c_{0}  \int_{\R^n} \frac{v(y)f(y)}{|x-y|^{n-2}}\, dy=-G_0vf
	$$
	So that,
	$$
		g(x)=-G_0vf(x)=-G_0v(wg)(x)=-G_0Vg(x) \qquad
		\Rightarrow \qquad  (I+G_0V)g=0.
	$$
	
	To see that $g\in L^2$, we note that
	$g(x)=-C_n I_2(vf)(x)$ with $I_2$ a Riesz potential.
	Noting, for instance, Lemma~2.3 of \cite{Jen} we have $I_2:L^{2,s}\to L^{2,-s'}$
	if $s,s'\geq 0$ and $s+s'\geq 2$.  Note that if $s=\beta\geq 2$ and $s'=0$ we have $vf\in L^{2,2}$ and then 
	$I_2 (vf)\in L^{2}$.

	On the other hand, assume $g\in L^2\setminus\{0\}$ such that
	$-\Delta g+Vg=0$ in $\mathcal S'$.  Then, denoting $f=wg$, we have $f\in L^{2,\beta}$.
	\begin{align*}
	(U+vG_0v)f(x)&=U(x)f(x)+C_nv(x) \int_{\R^n} \frac{v(y)f(y)}{|x-y|^{n-2}}\, dy\\
	&=U(x)w(x)g(x)+C_n v(x)\int_{\R^n} \frac{v(y)U(y)w(y)g(y)}{|x-y|^{n-2}}\, dy\\
	&=v(x)g(x)+v(x) G_0Vg(x)=v(I+G_0V)g=0.
	\end{align*}
	Where we used the definition of $f$ and \eqref{IG0V} in the last line.  This completes the proof.

\end{proof}

\begin{lemma}\label{lem:efnLinf}

	Assume that $|V(x)|\les \la x\ra^{-\beta}$ for some $\beta>2$,	
	if $g\in L^2(\R^n)$ for $n\geq 5$ is a solution of $(-\Delta+V)g=0$
	then $g\in L^\infty(\R^n)$.

\end{lemma}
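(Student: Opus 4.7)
The plan is to show $g\in L^\infty$ by first bootstrapping the integrability of $g$ with the Hardy--Littlewood--Sobolev (HLS) inequality, and then deducing the pointwise bound from the resulting integral representation.

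The starting point is the identity $(I+G_0V)g=0$ derived from $(-\Delta+V)g=0$ in the proof of Lemma~\ref{S characterization}, which rewrites the eigenvalue equation as
\begin{equation*}
g(x) = -c_0\int_{\R^n}\frac{V(y)\,g(y)}{|x-y|^{n-2}}\,dy = -c_0\,I_2(Vg)(x),
\end{equation*}
where $I_2$ denotes the Riesz potential of order $2$. The hypothesis $|V(x)|\les \la x\ra^{-\beta}$ with $\beta>2$ is equivalent to $V\in L^s(\R^n)$ for every $s\in(n/\beta,\infty]$; since $n/\beta<n/2$, I fix some $s_\ast>n/2$ with $V\in L^{s_\ast}$.

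First I would iterate the following step. Suppose $g\in L^{q_k}(\R^n)$, starting from $q_0=2$. H\"older's inequality gives $Vg\in L^{r_k}$ with $1/r_k=1/s_\ast+1/q_k$, and because $1/s_\ast<2/n$ the exponent $r_k$ lies in $(1,n/2)$ as long as $q_k$ is not yet too large. The HLS mapping $I_2:L^{r_k}\to L^{q_{k+1}}$ with $1/q_{k+1}=1/r_k-2/n$ then yields $g\in L^{q_{k+1}}$, where
\begin{equation*}
\frac{1}{q_{k+1}} = \frac{1}{q_k}-\Bigl(\frac{2}{n}-\frac{1}{s_\ast}\Bigr).
\end{equation*}
The bracketed quantity is strictly positive, so the exponent strictly increases with $k$ by a fixed amount. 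After finitely many iterations we leave the range $r_k<n/2$ with $g\in L^Q$ for some $Q>n/2$.

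With $g\in L^2\cap L^Q$, $Q>n/2$, in hand, I would bound $g(x)$ pointwise by splitting the integral at $|x-y|=1$. On the near region $\{|x-y|\leq 1\}$, the kernel $|x-y|^{2-n}$ belongs to $L^{Q'}(B(x,1))$ since $(n-2)Q'<n$ is equivalent to $Q>n/2$; H\"older against $Vg\in L^Q$ (using $V\in L^\infty$) yields a bound independent of $x$. On the far region $\{|x-y|>1\}$, H\"older with the truncated kernel in $L^2(\R^n)$ (using $2(n-2)>n$ for $n\geq 5$) and $Vg\in L^2$ gives a second uniform bound. The only real obstacle is verifying that the bootstrap terminates, which it does because $\beta>2$ provides the strict gap $2/n-1/s_\ast>0$ driving each step forward.
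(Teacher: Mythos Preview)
Your proof is correct and takes a genuinely different route from the paper. The paper splits $G_0 = G_{\rm loc} + G_{\rm dist}$ at a small scale $\epsilon_n$ chosen so that $\sup_x \int |G_{\rm loc}(x,y)V(y)|\,dy < \tfrac12$; since $G_{\rm dist}(x,\cdot)V(\cdot) \in L^2$ uniformly in $x$, the distant piece satisfies $\|G_{\rm dist}Vg\|_\infty \les \|g\|_2$ directly, and the local piece is absorbed into the left side to give $\|g\|_\infty \leq 2C\|g\|_2$. Your argument is the classical elliptic bootstrap via Hardy--Littlewood--Sobolev: it is longer and invokes more machinery, but each step produces an explicit $L^{q_k}$ bound, so the finiteness of $\|g\|_\infty$ never enters implicitly. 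The paper's contraction approach is shorter and uses only $g\in L^2$ together with the smallness of the truncated kernel; your iteration would adapt more readily to situations where such an absorption is unavailable (e.g.\ unbounded $V$ lacking enough local integrability to make $G_{\rm loc}V$ small in operator norm).
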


\begin{proof}

	Recall that if $(-\Delta+V)g=0$ is equivalent to
	$(I+G_0V)g=0$, so that
	$$
		g=-G_0Vg.
	$$
	Recall that $G_0(x,y)=c_0|x-y|^{2-n}$, so that we
	can write
	\begin{align}
		g&=-G_{loc}Vg-G_{dist}Vg,
	\end{align}
	where
	$$
		G_{loc}(x,y):=\frac{c_0}{|x-y|^{n-2}}
		\chi_{\{|x-y|<\epsilon_n\}}, \qquad
		G_{dist}:=G_0-G_{loc}.
	$$	
	Here $\epsilon_n>0$ is a constant chosen so that
	$$
		\sup_{x\in \R^n}
		\int_{\R^n} |G_{loc}(x,y)V(y)|\, dy <\frac{1}{2}.
	$$
	We can take $\epsilon_n<(c_0 \|V\|_\infty^2 \omega_{n-1})^{-\f12}$ with $\omega_{n-1}$ the surface
	area of the unit ball in $\R^{n-1}$.
	Then,
	\begin{align*}
		\|G_{dist}Vg\|_\infty \leq 
		\|G_{dist}V\|_{2}\|g\|_2=C\|g\|_2
	\end{align*}
	since the $G_{dist}(x,y)V(y)$ is an $L^2$ function
	of $y$ uniformly in $x$.  So that,
	\begin{align*}
		|g(x)|&=\bigg|\int_{\R^n}-(G_{loc}(x,y)V(y)
		+G_{dist}(x,y)V(y))g(y)\, dy\bigg|\\
		&\leq \bigg(\int_{\R^n} |G_{loc}(x,y)V(y)|\,dy 
		\bigg)\|g\|_\infty+ \|G_{dist}V\|_2 \|g\|_2<\frac{1}{2}\|g\|_\infty +C \|g\|_2.
	\end{align*}
	Thus
	$$
		\|g\|_\infty < 2C \|g\|_2 <\infty.
	$$

\end{proof}

\begin{lemma}\label{D1 lemma}
 
  The kernel of $S_1vG_2vS_1$ is trivial in $S_1L^2(\R^n)$
  for $n\geq 5$.

\end{lemma}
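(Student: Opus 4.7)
The plan is to exhibit $S_1vG_2vS_1$ as a manifestly positive semidefinite operator on $S_1L^2(\mathbb{R}^n)$ whose quadratic form vanishes only on the zero vector. The key algebraic identity I would use is
\[
G_2 = G_0\circ G_0
\]
at the level of integral operators. This follows either by noting that $G_2(x,y)=c_2|x-y|^{4-n}$ and computing the Riesz composition $\int c_0^2|x-z|^{2-n}|z-y|^{2-n}\,dz = c_2|x-y|^{4-n}$ (with positive constant), or equivalently by recognizing $G_2$ as the $\lambda^2$-coefficient in the Taylor expansion of $(-\Delta-\lambda^2)^{-1}$, which is $(-\Delta)^{-2}=G_0G_0$. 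The composition is justified for the arguments we care about because $vG_2v$ is Hilbert--Schmidt under our decay hypothesis on $V$ (Lemma~\ref{lem:Mexp}).

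Given $f\in S_1L^2(\mathbb{R}^n)$, Lemma~\ref{S characterization} produces a zero-energy eigenfunction $g\in L^2(\mathbb{R}^n)$ with $f=wg$, and moreover the construction there shows $g=-G_0Vg=-G_0vf$, so in particular $G_0vf\in L^2$. Using $S_1^*=S_1$ and $S_1f=f$, together with the symmetry of the real kernel $G_0(x,y)$ and Fubini (justified by the Hilbert--Schmidt kernel bound on $v|x-y|^{4-n}v$), I would compute
\[
\langle S_1vG_2vS_1f,f\rangle
=\langle G_2vf,vf\rangle
=\langle G_0(G_0vf),vf\rangle
=\langle G_0vf,G_0vf\rangle
=\|G_0vf\|_2^2
=\|g\|_2^2.
\]

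If $S_1vG_2vS_1f=0$, then pairing with $f$ forces $\|g\|_2=0$, hence $g\equiv 0$, hence $f=wg=0$. The main (essentially only) technical point is to justify the passage $\langle G_2vf,vf\rangle=\|G_0vf\|_2^2$: one must check that $G_0vf$ belongs to $L^2$ (which is given by Lemma~\ref{S characterization}, since $G_0vf=-g$) and that Fubini is legal in factoring $G_2$ through $G_0$. Both are controlled by the assumed decay on $V$ via the Hilbert--Schmidt bound on $vG_2v$, so no further analysis beyond the earlier expansions is needed.
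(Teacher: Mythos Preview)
Your proof is correct and lands on the same key identity as the paper: $\langle G_2 vf, vf\rangle = \|G_0 vf\|_2^2$, from which triviality of the kernel follows. The paper obtains this identity by writing $G_2=\lim_{\lambda\to 0}\lambda^{-2}(R_0^\pm(\lambda^2)-G_0)$, passing to the Fourier side, and invoking monotone convergence to get $\int |\xi|^{-4}|\widehat{vf}(\xi)|^2\,d\xi$; you instead use the operator identity $G_2=G_0\circ G_0$ (equivalently, the Riesz composition $I_2\circ I_2=I_4$ in dimensions $n\ge 5$) to arrive at the same quadratic form directly. Your route is marginally more elementary since it avoids the limiting step, at the cost of needing to justify Fubini in the triple integral; as you note, this is immediate from $G_0|vf|\in L^2$, which follows from the same weighted mapping property $I_2:L^{2,2}\to L^2$ already used in Lemma~\ref{S characterization}. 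The endgames also differ cosmetically: the paper concludes $vf=0$ (hence $f=-wG_0vf=0$), while you conclude $g=-G_0vf=0$ and then $f=wg=0$; these are the same statement.
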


\begin{proof}
 
  Take $f\in S_1L^2$ (that is such that $(U+vG_0v)f=0$) with $S_1vG_2vS_1f=0$.  Then using \eqref{Taylor sloppy} we have
  $G_2=\lim_{\lambda\to0} \frac{R_0^{\pm}(\lambda^2)-G_0}{\lambda^2}$.
  \begin{align*}
      0&=\la  S_1vG_2vS_1f,f\ra=\la G_2vf,vf\ra=\lim_{\lambda\to0}\bigg\la \bigg(\frac{R_0^{\pm}(\lambda^2)-G_0}{\lambda^2}\bigg)vf,vf\bigg\ra\\
      &=\lim_{\lambda\to 0} \frac{1}{\lambda^2} \int ((|\xi|^2+\lambda^2)^{-1}-|\xi|^2)^{-1} \widehat{vf}(\xi)\overline{\widehat{vf}}(\xi)d\xi\\
      &=\lim_{\lambda\to0} \int \frac{1}{|\xi|^2(|\xi|^2+\lambda^2)} |\widehat{vf}(\xi)|^2\, d\xi=\int \frac{|\widehat{vf}(\xi)|^2}{|\xi|^4}\, d\xi
      =\la G_0vf, G_0vf\ra.
  \end{align*}
  Where we used the monotone convergence theorem on the last equality.  This implies that $\widehat{vf}=0$ and $vf=0$.  Thus the kernel of
  $SvG_2vS$ is trivial.

\end{proof}

Technically the above proof only applies in odd dimensions.
It is easy to adapt the proof to even dimensions using the 
expansions given in the companion paper, \cite{GGeven}
with the operator $G_1^0$ replacing $G_2$.

\begin{corollary}
 
    For $f_1, f_2\in S_1L^2$ we have the identity
    $$
    \la G_2vf_1,vf_2\ra=\la G_0vf_1, G_0vf_2\ra.
    $$

\end{corollary}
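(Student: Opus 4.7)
The plan is to deduce this bilinear identity from the diagonal identity implicitly established inside the proof of Lemma~\ref{D1 lemma} by polarization. Examining that proof, the chain of equalities
\[
\langle G_2 v f, v f\rangle
= \lim_{\lambda\to 0} \int \frac{|\widehat{vf}(\xi)|^2}{|\xi|^2(|\xi|^2+\lambda^2)}\, d\xi
= \int \frac{|\widehat{vf}(\xi)|^2}{|\xi|^4}\, d\xi
= \langle G_0 v f, G_0 v f\rangle
\]
is valid for every $f \in S_1 L^2(\R^n)$, not merely those killed by $S_1 v G_2 v S_1$; the hypothesis $S_1vG_2vS_1 f = 0$ was only used to assert that the left-hand side vanishes, not to justify the manipulations themselves. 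In particular the computation produces the quadratic identity $Q(f) := \langle G_2vf,vf\rangle - \langle G_0vf, G_0vf\rangle = 0$ on all of $S_1L^2$.

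Next I would introduce the sesquilinear forms
\[
B_1(f_1,f_2) := \langle G_2 v f_1, v f_2\rangle,
\qquad
B_2(f_1,f_2) := \langle G_0 v f_1, G_0 v f_2\rangle.
\]
Both are Hermitian, since the kernels of $G_0$ and $G_2$ are real-valued and symmetric in $(x,y)$, and $v$ is real-valued; thus $B_j$ is the sesquilinear form associated to the quadratic form $f\mapsto B_j(f,f)$, which coincides with one side of the identity. Because $S_1 L^2(\R^n)$ is a complex linear subspace (closed under $f\mapsto f+g$, $f\mapsto if$, and $f \mapsto f-g$ for $f,g\in S_1L^2$), the standard polarization identity
\[
4 B_j(f_1,f_2) = B_j(f_1+f_2,f_1+f_2) - B_j(f_1-f_2,f_1-f_2) + i B_j(f_1+if_2,f_1+if_2) - i B_j(f_1-if_2,f_1-if_2)
\]
applies, and the vanishing of $Q = B_1 - B_2$ on every element of $S_1L^2$ forces $B_1(f_1,f_2) = B_2(f_1,f_2)$ for all $f_1,f_2\in S_1L^2$.

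The only thing to check carefully is that the sesquilinear forms are well-defined on $S_1 L^2$, i.e., that the integrals defining $B_1$ and $B_2$ converge absolutely for such $f_1, f_2$. This follows from Cauchy--Schwarz: the diagonal identity in Lemma~\ref{D1 lemma} shows that $\widehat{vf}/|\xi|^2 \in L^2(\R^n)$ for each $f \in S_1L^2$, which gives $|\widehat{vf_1}(\xi)\overline{\widehat{vf_2}(\xi)}|/|\xi|^4 \in L^1(\R^n)$, and a parallel Cauchy--Schwarz bound handles $\langle G_2vf_1, vf_2\rangle$. I do not expect any substantive obstacle beyond this verification; the real content of the corollary is the diagonal computation, which has already been done.
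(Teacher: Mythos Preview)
Your proposal is correct. The paper states this corollary without proof, treating it as an immediate consequence of the computation in Lemma~\ref{D1 lemma}; your polarization argument is exactly the natural way to fill in the details, and your observation that the chain of equalities in that proof holds for all $f\in S_1L^2$ (not just those in the kernel of $S_1vG_2vS_1$) is the key point.

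One minor remark: instead of polarizing, you could also run the Fourier-side computation directly with $f_1,f_2$ in place of $f,f$, replacing the monotone convergence step by dominated convergence (justified, as you note, by Cauchy--Schwarz and the diagonal case). This is marginally more direct but amounts to the same thing.
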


\begin{lemma}\label{lem:eproj}
 
    The projection onto the eigenspace at zero is $G_0vS_1[S_1vG_2vS_1]^{-1}S_1vG_0$.

\end{lemma}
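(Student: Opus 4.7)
The plan is to recognize the claimed projection as an expression of the form $A(A^*A)^{-1}A^*$, which is a standard orthogonal projection onto $\mathrm{Range}(A)$, and then to identify that range with the zero-energy eigenspace using Lemma~\ref{S characterization}.

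Set $A := G_0 v S_1$, viewed as an operator $L^2 \to L^2$. Because $S_1$ is a finite-rank projection, $S_1 L^2$ has a basis $\{\phi_1, \ldots, \phi_N\}$, and by Lemma~\ref{S characterization} each $G_0 v \phi_j$ equals $-g_j$ for a zero-energy eigenfunction $g_j \in L^2$. Hence $A$ is bounded $L^2 \to L^2$, and its formal adjoint $A^* = S_1 v G_0$ is likewise well-defined (acting on $h \in L^2$ via $\langle A^* h, \phi_j\rangle = \langle h, -g_j\rangle$). First I would verify these mapping properties and record that $A$ and $A^*$ are honest bounded adjoints on $L^2$.

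Next, I would apply the Corollary preceding Lemma~\ref{lem:eproj}: for $f_1, f_2 \in L^2$, since $S_1 f_i \in S_1 L^2$,
\begin{equation*}
\langle S_1 v G_2 v S_1 f_1, f_2\rangle = \langle G_2 v S_1 f_1, v S_1 f_2\rangle = \langle G_0 v S_1 f_1, G_0 v S_1 f_2\rangle = \langle A^* A f_1, f_2\rangle.
\end{equation*}
Thus $S_1 v G_2 v S_1 = A^* A$ as operators on $S_1 L^2$. By Lemma~\ref{D1 lemma} this operator is invertible on $S_1 L^2$, so $D_1 = (A^* A)^{-1}$, and therefore
\begin{equation*}
G_0 v S_1 D_1 S_1 v G_0 = A(A^*A)^{-1} A^*.
\end{equation*}

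A direct algebraic check then shows $[A(A^*A)^{-1}A^*]^2 = A(A^*A)^{-1}A^*$ and that it is self-adjoint, so it is the orthogonal projection onto $\mathrm{Range}(A) = \mathrm{Range}(G_0 v S_1)$. By Lemma~\ref{S characterization}, a function $g \in L^2$ satisfies $Hg = 0$ if and only if $g = -G_0 v f$ for some $f \in S_1 L^2$; that is, $\mathrm{Range}(G_0 v S_1)$ is exactly the zero-energy eigenspace. Consequently $A(A^*A)^{-1}A^* = P_e$.

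The only delicate point is the one already flagged: making sense of $G_0 v$ as a bounded operator, since $G_0$ is a Riesz potential. This is the obstacle, but it is handled entirely by finite-rank considerations — we only ever need $G_0 v$ on the finite-dimensional subspace $S_1 L^2$, where Lemma~\ref{S characterization} and Lemma~\ref{lem:efnLinf} guarantee its output lies in $L^2 \cap L^\infty$. After that observation, the identification $S_1 v G_2 v S_1 = A^* A$ and the projection formula $A(A^*A)^{-1}A^*$ close the argument cleanly.
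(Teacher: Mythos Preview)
Your proof is correct and follows essentially the same approach as the paper's. The paper works in coordinates---choosing an orthonormal basis $\{\phi_j\}$ for $S_1L^2$, writing $\psi_j = -G_0v\phi_j$, identifying the matrix of $S_1vG_2vS_1$ with the Gram matrix $\langle\psi_i,\psi_j\rangle$ via the same Corollary you invoke, and then verifying directly that $\sum_{i,j}(A^{-1})_{ij}\langle f,\psi_j\rangle\psi_i$ acts as the identity on the $\psi_k$---whereas you package the identical computation as the coordinate-free identity $A(A^*A)^{-1}A^*$ with $A=G_0vS_1$; the content is the same.
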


\begin{proof}
 
  If dim $S_1L^2=N<\infty$ (see \cite{Jen}), then let $\phi_j$, $j=1,2,\dots,N$ be an orthonormal basis for
  $S_1L^2(\R^n)$.  Then
  \begin{align*}
      0&=(U+vG_0v)\phi_j,\\
      0&=(I+wG_0v)\phi_j=\phi_j+wG_0v\phi_j.
  \end{align*}
  Write $\phi_j=w\psi_j$ for $1\leq j\leq N$ with $\psi_j$ linearly independent.  So that
  \begin{align*}
      0&=w\psi_j+wG_0vw\psi_j
  \end{align*}
  and so
  $$
  0=\psi_j+G_0V\psi_j.
  $$
  So that for any $f\in L^2$ we have
  \begin{align*}
      S_1f&=\sum_{j=1}^N \la f, \phi_j\ra \phi_j, \\
      S_1vG_0f&=\sum_{j=1}^N \la S_1vG_0f, \phi_j\ra \phi_j=\sum_{j=1}^N \la f, G_0v\phi_j\ra \phi_j
      =-\sum_{j=1}^N \la f,\psi_j\ra \phi_j
  \end{align*}
  Let $A_{ij}$ be the matrix representation of 
  $S_1vG_2vS_1$ with respect to $\{\phi_j\}_{j=1}^N$.  That is,
  \begin{align*}
      A_{ij}=\la \phi_i,S_1vG_2vS_1\phi_j\ra=\la G_0v\phi_i, G_0v \phi_j\ra
      =\la G_0V\phi_i, G_0V\phi_j\ra=\la \psi_i, \psi_j\ra.
  \end{align*}
  Denoting $P_e=G_0vS_1[S_1vG_2vS_1]^{-1}S_1vG_0$, for $f\in L^2$ we have
  \begin{align*}
      P_ef&=G_0vS_1[S_1vG_2vS_1]^{-1}S_1vG_0 f=G_0vS_1[S_1vG_2vS_1]^{-1}\Big(-\sum_{j=1}^N \la f,\psi_j\ra \phi_j\Big)\\
      &=-\sum_{j=1}^N G_0vS_1[S_1vG_2vS_1]^{-1}\phi_j \la f,\psi_j\ra=\sum_{i,j=1}^N G_0vS_1(A_{ij}^{-1})\phi_i \la f,\psi_j\ra\\
      &=-\sum_{i,j=1}^N G_0v\phi_i (A_{ij}^{-1})\la f,\psi_j\ra=\sum_{i,j=1}^N (A_{ij}^{-1})\psi_i \la f,\psi_j\ra.
  \end{align*}
  For $f=\psi_k$ we have
  \begin{align*}
      P_e\psi_k&=\sum_{i,j=1}^N (A_{ij}^{-1}) \psi_i \la \psi_k, \psi_j\ra=\sum_{i,j=1}^N (A_{ij}^{-1})(A_{jk})\psi_i=\psi_k.
  \end{align*}
  Thus, we have that the range of $P_e$ is the span of $\{\psi_j\}_{j=1}^N$ and is the identity on the range of $P_e$.  Since $P_e$ is
  self-adjoint, we are done.

\end{proof}

Defining $P_e$ to be the projection onto the zero energy
eigenspace, to match the notation of the previous 
sections we have
\begin{align}\label{Pe defn}
	P_e=G_0vD_1vG_0.
\end{align}
We also
have the following corollary.

\begin{corollary}\label{Pemapping}

	$P_e$ is bounded operator from $L^1$ to $L^\infty$.

\end{corollary}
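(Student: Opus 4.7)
The plan is to exploit the explicit finite-rank representation of $P_e$ already established in Lemma~\ref{lem:eproj} and combine it with the $L^\infty$ regularity of zero-energy eigenfunctions from Lemma~\ref{lem:efnLinf}. First I would recall from the proof of Lemma~\ref{lem:eproj} that, choosing an orthonormal basis $\{\phi_j\}_{j=1}^N$ of $S_1 L^2(\R^n)$ and setting $\psi_j = -G_0 v \phi_j$, we obtained the explicit formula
\begin{equation*}
P_e f = \sum_{i,j=1}^N A_{ij}^{-1} \, \psi_i \, \langle f,\psi_j\rangle, \qquad A_{ij} = \langle \psi_i,\psi_j\rangle.
\end{equation*}
In particular $P_e$ is a finite-rank operator whose integral kernel is
\begin{equation*}
P_e(x,y) = \sum_{i,j=1}^N A_{ij}^{-1} \, \psi_i(x)\,\overline{\psi_j(y)}.
\end{equation*}

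Next I would observe that each $\psi_j$ lies in the range of $P_e$ and solves $(-\Delta + V)\psi_j = 0$ with $\psi_j \in L^2(\R^n)$ (by Lemma~\ref{S characterization} and the argument just recalled). Applying Lemma~\ref{lem:efnLinf} to each $\psi_j$ then yields $\psi_j \in L^\infty(\R^n)$, with $\|\psi_j\|_\infty \lesssim \|\psi_j\|_2$. Since $N < \infty$ and the Gram matrix $A$ is invertible on $S_1 L^2$, we obtain the uniform pointwise bound
\begin{equation*}
|P_e(x,y)| \leq \sum_{i,j=1}^N |A_{ij}^{-1}|\, \|\psi_i\|_\infty\, \|\psi_j\|_\infty =: C_e < \infty.
\end{equation*}

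Finally, a bounded integral kernel gives a bounded operator from $L^1$ to $L^\infty$: for $f \in L^1(\R^n)$,
\begin{equation*}
\|P_e f\|_\infty = \sup_x \Bigl|\int_{\R^n} P_e(x,y) f(y)\,dy\Bigr| \leq C_e \|f\|_1,
\end{equation*}
which is the desired estimate. There is no real obstacle here; the only thing to verify carefully is that the decay hypothesis $|V(x)| \lesssim \langle x\rangle^{-\beta}$ assumed throughout (with $\beta > 2$) is exactly what is needed to invoke Lemma~\ref{lem:efnLinf}, and that the basis elements $\psi_j$ produced in Lemma~\ref{lem:eproj} genuinely belong to $L^2$, which was already established via the mapping properties of the Riesz potential $I_2$ in Lemma~\ref{S characterization}.
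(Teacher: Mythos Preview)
Your proof is correct and follows essentially the same approach as the paper: both exploit the finite-rank representation of $P_e$ from Lemma~\ref{lem:eproj} together with $\psi_j \in L^\infty$ from Lemma~\ref{lem:efnLinf} to bound the kernel pointwise. You are in fact slightly more careful than the paper, which tacitly writes $P_e f = \sum_j \langle \psi_j, f\rangle \psi_j$ as though the $\psi_j$ were orthonormal, whereas you retain the Gram matrix factor $A_{ij}^{-1}$ explicitly.
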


\begin{proof}

	Take $f\in L^1$, then
	\begin{align*}
		|P_e f(x)|=\bigg|\sum_{j=1}^N \la \psi_j, f\ra 
		\psi_j\bigg| \leq \sum_{j=1}^N |\la \psi_j, f\ra|
		|\psi_j| \leq \sum_{j=1}^N \|\psi_j\|_\infty^2\| f\|_{1}<
		\infty
	\end{align*}
	by Lemma~\ref{lem:efnLinf},
 	$\psi_j\in L^\infty$.

\end{proof}

\subsection{Prospective examples of $V(x)$}
Here we construct examples of potentials $V$ for which
$H=-\Delta+V$ has a zero-energy eigenvalue whose
eigenfunction $\psi$ satisfies $\int V\psi=0$ and
$\int xV\psi=0$.  This shows that the hypotheses in
Theorem~\ref{thm:main} and Corollary~\ref{cor:ugly}
can be realized.

Any solution of the equation $(-\Delta + V)\psi = 0$ satisfies the functional relation 
$-\psi = (-\Delta)^{-1} (V\psi)$, where $(-\Delta)^{-1}$
is convolution against the Green's function $C_n|x|^{2-n}$.
If $V$ is compactly supported, then $\psi$ is a harmonic function outside the
support of $V$ and decays at the rate $|x|^{2-n}$ unless
additional cancellation takes place inside the convolution integral.  
Choose any (nonempty) finite collection of points $x_i \in \R^n$, and weights
$\mu_i \in \R$ so that the signed measure
$\sum_i \mu_i \delta_{x_i}$ has vanishing moments up to $k^{\rm th}$ order.
Then let
\begin{equation*}
F_k(x) := \big|\Delta^{-1}\big({\textstyle \sum_i} \;\mu_i\delta_{x_i}\big)\big| = 
 {\textstyle - C_n \sum_i} \; \mu_i |x-x_i|^{2-n}.
\end{equation*}

When $|x| > 2\max |x_i|$, we can expand each $|x-x_i|^{2-n}$ as a Taylor series
centered at $x$, that is  $|x-x_i|^{2-n} = P_k(x_i) +
O(|x|^{1-n-k}|x_i|^{k+1})$.  The vanishing moment assumption ensures that
$\sum_i \mu_i P_k(x_i) = 0$, which leaves  $F_k(x) =\sum_i \mu_i \,O(|x|^{1-n-k}|x_i|^{k+1}) \les |x|^{1-n-k}$.

Each $x_i$ has a neighborhood
$\Omega_i$ where $F_k(x) \sim \mu_i|x-x_i|^{2-n}$ and is
therefore nonzero.
Now let $\psi_k(x)$ be any function that agrees with
$F_k(x)$ outside of the union of $\Omega_i$ and is a nonvanishing $C^2$
continuation inside.  Then $\Delta\psi_k$ is continuous with compact
support inside $\cup_i \overline{\Omega}_i$.  Finally,
\begin{equation*}
V(x) = \frac{\Delta\psi_k(x)}{\psi_k(x)}
\end{equation*}
belongs to $C_c(\R^n)$ and the Schr\"odinger operator
$H = -\Delta + V$ has $\psi_k$ as a rapidly decaying eigenfunction
at $\lambda = 0$.  Specifically, $|\psi_k(x)| \les |x|^{1-n-k}$ for large $|x|$.

The conditions of Theorem~\ref{thm:main}, part~\ref{thmpart3}
are satisfied for any potential constructed in this manner with $k \geq 1$,
provided all other eigenfunctions (if any exist) also decay at the 
rate $|x|^{-n}$ or faster.  The following argument adapted from~\cite{goldE}
suggests that typically the eigenspace is in fact one-dimensional.

Starting from a fixed choice of $\psi_k$ it is possible to construct a larger
family of complex potentials of the form 
$\psi_z(x)  = \psi_k(x)e^{z\eta(x)}$, where $\eta \in C^\infty_c(\cup_i\Omega_i)$
and $z$ varies over $\mathbb C$.  The resulting
potentials $V_z(x)$ have complex-analytic dependence on $z$.  The analytic
Fredholm theorem applied to $(I + (-\Delta)^{-1}V_z)$ indicates that the dimension
of the nullspace for $-\Delta + V_z$ should be constant for generic $z$ with only 
a discrete set of exceptions where it is larger.

If the nullspace of $H_0 := -\Delta + V_0$ is one-dimensional, we are done.
Supose the nullspace is two-dimensional with a second eigenfunction $\phi$.
As constructed, the formula for $V_z$ is
\begin{equation*}
V_z = \frac{\Delta(e^{z\eta}\psi_k)}{e^{z\eta}\psi_k} = V_0 - \frac{H_0(\eta \psi_k)}{\psi_k}z
+ \frac{|\nabla \eta|^2}{\psi_k}z^2
\end{equation*}
In particular, $\frac{d}{dz}V_z\big|_{z=0} = -\frac{H_0(\eta\psi_k)}{\psi_k}$.

Then for small values of $z$,
\begin{align*}
\la(-\Delta + V_z)\phi, \phi\ra &= \la(-\Delta + V_0) \phi,\phi\ra 
-\Big\la \Big(\frac{H_0(\eta \psi_k)}{\psi_k}\Big)\phi, \phi\Big\ra z + O(z^2) \\
&= -\Big\la H_0(\eta \psi_k) , \frac{\phi^2}{\psi_k}\Big\ra z + O(z^2).
\end{align*}

If the leading-order term is nonzero, then the repeated eigenvalue when $z=0$ will
split for all other nearby values of $z$.  However, since
$H_0$ is self-adjoint, we can rewrite the inner product as
\begin{equation*}
\Big\la H_0(\eta \psi_k), \frac{\phi^2}{\psi_k}\Big\ra = \Big \la \eta,
\psi_k H_0\Big(\frac{\phi^2}{\psi_k}\Big)\Big\ra.
\end{equation*}
Since $\psi_k$ and $\phi$ are linearly independent, $\frac{\phi^2}{\psi_k}$ is linearly independent
from both of them, thus the function on the right side of the inner product is nonzero.
Any choice of $\eta$ which is not orthogonal to this suffices for constructing potentials
$V_z$ that have a one-dimensional eigenspace at zero (with eigenvector $e^{z\eta}\psi_k$).

Similar eigenvalue-splitting arguments hold if the nullspace of $H_0$ is larger, provided a
sufficient collection of functions $H_0(\frac{\phi_i \phi_j}{\psi_k})$ are nonzero and
linearly independent.

\section{Integral Estimates}\label{sec:ests}

The proof of Lemma~\ref{lem:iterated} hinges on the following estimate.
\begin{lemma}\label{EG:Lem}

	Fix $u_1,u_2\in\R^n$ and let $0\leq k,\ell<n$, 
	$\beta>0$, $k+\ell+\beta\geq n$, $k+\ell\neq n$.
	We have
	$$
		\int_{\R^n} \frac{\la z\ra^{-\beta-}}
		{|z-u_1|^k|z-u_2|^\ell}\, dz
		\les \left\{\begin{array}{ll}
		(\frac{1}{|u_1-u_2|})^{\max(0,k+\ell-n)}
		& |u_1-u_2|\leq 1\\
		\big(\frac{1}{|u_1-u_2|}\big)^{\min(k,\ell,
		k+\ell+\beta-n)} & |u_1-u_2|>1
		\end{array}
		\right.
	$$

\end{lemma}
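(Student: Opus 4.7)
The plan is a standard three-region decomposition based on which of $u_1,u_2$ the variable $z$ is nearest. Set $d:=|u_1-u_2|$ and partition $\R^n$ into
\begin{align*}
	\Omega_1 &= \{z : |z-u_1| \le d/2\},\\
	\Omega_2 &= \{z : |z-u_2| \le d/2\},\\
	\Omega_3 &= \{z : |z-u_1|,|z-u_2| \ge d/2\}.
\end{align*}
On $\Omega_1$ we have $|z-u_2|\sim d$ by the triangle inequality, so the integrand is essentially $\la z\ra^{-\beta-}|z-u_1|^{-k}d^{-\ell}$; symmetrically on $\Omega_2$. On $\Omega_3$ we can bound both $|z-u_1|$ and $|z-u_2|$ from below by $d/2$ but we must keep one or both factors in play to capture the sharp power.

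First I would treat the easy regime $d\le 1$. On $\Omega_1$ the weight $\la z\ra^{-\beta-}$ only contributes a constant at distance $O(1)$ from $u_1$, so I bound
\[
	\int_{\Omega_1}\frac{\la z\ra^{-\beta-}}{|z-u_1|^k|z-u_2|^\ell}\,dz
	\les d^{-\ell}\int_0^{d/2}r^{n-1-k}\,dr + \text{(tails)},
\]
and since $\beta>0$ the tail contribution is harmless; a short case analysis on whether $k<n$, $k>n$ and whether $k+\ell\gtrless n$ produces the bound $d^{\max(0,k+\ell-n)}$ (with an implicit constant, absorbing the pathological $k+\ell=n$ case which is excluded). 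The symmetric bound holds on $\Omega_2$. On $\Omega_3$ the integral is dominated by $d^{-k-\ell}\int_{|z|\ge d/2}\la z\ra^{-\beta-}\,dz$ or $d^{-k-\ell}\cdot \la d\ra^{n-\beta-}$, which again yields the claimed power.

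For $d>1$ the weight becomes essential. I would split $\Omega_3$ further into $\{|z|\le d/2\}$ and $\{|z|>d/2\}$. In the first subregion, $|z-u_j|\sim |u_j|$ and I bound the integrand pointwise using whichever of $|u_1|,|u_2|$ is larger, producing at least one factor of $d^{-k}$ or $d^{-\ell}$ times $\int \la z\ra^{-\beta-}dz\les 1$. In the second subregion, $\la z\ra^{-\beta-}$ gives decay $d^{-\beta-}$ while $|z-u_j|^{-k-\ell}\les d^{-k-\ell}$ is replaced (using $\la z\ra\gtrsim d$) by $\la z\ra^{-k-\ell}$ so the integral contributes $d^{n-k-\ell-\beta-}$. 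On $\Omega_1$ and $\Omega_2$ I use that $u_j$ lies at distance $\gtrsim d$ from the origin so the weight gives $\la z\ra^{-\beta-}\les d^{-\beta-}$ across the ball, producing $d^{-\ell-\beta}\cdot d^{n-k}$ and its symmetric counterpart. Combining all three pieces gives the bound $d^{-\min(k,\ell,k+\ell+\beta-n)}$.

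The main obstacle is bookkeeping rather than hard analysis: one must check that each of the three (or five, once $\Omega_3$ is subdivided) contributions is dominated by the stated exponent in every subcase of the parameters $k,\ell,\beta$ and $n$, and that the excluded case $k+\ell=n$ really is the only place the argument breaks. Once the decomposition is laid out, each piece reduces to a radial integral of the form $\int r^{n-1-a}\,dr$ over a bounded or unbounded interval, which is elementary.
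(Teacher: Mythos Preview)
The paper does not prove this lemma; it is stated without proof in Section~\ref{sec:ests} as a known estimate (the label suggests it is imported from one of the Erdo\u{g}an--Green references). So there is no paper argument to compare against, and I will simply assess your sketch on its own.

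Your decomposition is the standard one and the overall strategy is sound, but two of the specific claims are wrong as written. On $\Omega_3$ with $d\le 1$: bounding both $|z-u_j|\ge d/2$ from below and then integrating $\la z\ra^{-\beta-}$ over the exterior fails unless $\beta>n$, which is not assumed, and your alternative $d^{-k-\ell}\la d\ra^{n-\beta-}\sim d^{-k-\ell}$ is strictly worse than the target $d^{-\max(0,k+\ell-n)}$. The missing observation is that on $\Omega_3$ one has $|z-u_1|\sim|z-u_2|$ (triangle inequality, since both exceed $d/2$ and differ by $d$), so the integrand is comparable to $\la z\ra^{-\beta-}|z-u_1|^{-(k+\ell)}$; one then splits at $|z-u_1|\sim 1$ and invokes the hypothesis $k+\ell+\beta\ge n$ to control the far piece uniformly in $u_1$. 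For $d>1$ you assert that both $u_j$ lie at distance $\gtrsim d$ from the origin, but only one of them need satisfy this (take $u_1=0$, $|u_2|=d$); your bounds on $\Omega_1$ and on $\Omega_3\cap\{|z|\le d/2\}$ must therefore be reworked with an extra case split depending on which $u_j$ is near the origin. The claimed exponent $\min(k,\ell,k+\ell+\beta-n)$ does emerge once the cases are tracked correctly, so these are repairable gaps rather than a wrong approach --- but as written the sketch does not close.
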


More precisely, noting that $\max(0,k+\ell-n)\leq \min(k,\ell,k+\ell+\beta-n)$,
we use the simple corollary,

\begin{corollary}\label{EGcor}

	Fix $u_1,u_2\in\R^n$ and let $0\leq k,\ell<n$, 
	$\beta>0$, $k+\ell+\beta\geq n$, $k+\ell\neq n$.
	We have
	$$
		\int_{\R^n} \frac{\la z\ra^{-\beta-}}
		{|z-u_1|^k|z-u_2|^\ell}\, dz
		\les 
		\bigg(\frac{1}{|u_1-u_2|}\bigg)^{\max(0,k+\ell-n)},
	$$
	and
	$$
		\int_{\R^n} \frac{\la z\ra^{-\beta-}}
		{|z-u_1|^k|z-u_2|^\ell}\, dz
		\les 
		\bigg(\frac{1}{|u_1-u_2|}\bigg)^{\min(k,\ell,
		k+\ell+\beta-n)}.
	$$

\end{corollary}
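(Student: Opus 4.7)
The plan is to deduce the corollary directly from Lemma~\ref{EG:Lem} by unifying its two cases into a single uniform bound. The key observation, already flagged just above the statement, is that under the hypotheses
$$\max(0, k+\ell-n) \leq \min(k, \ell, k+\ell+\beta-n).$$
I would first verify this by checking each of the six elementary inequalities it encodes: $0 \leq k$ and $0 \leq \ell$ are the standing assumptions, $0 \leq k+\ell+\beta-n$ is exactly the hypothesis $k+\ell+\beta \geq n$, the inequalities $k+\ell-n \leq k$ and $k+\ell-n \leq \ell$ follow from $k, \ell < n$, and $k+\ell-n \leq k+\ell+\beta-n$ is equivalent to $\beta > 0$.

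With this inequality in hand, both claimed bounds follow by combining the two cases of the lemma with the monotonicity of $r \mapsto r^s$ in $s$, which is increasing when $r \geq 1$ and decreasing when $r \leq 1$. Writing $s_1 := \max(0, k+\ell-n)$ and $s_2 := \min(k, \ell, k+\ell+\beta-n)$, so that $s_1 \leq s_2$, the first claim (with exponent $s_1$) is exactly the lemma when $|u_1-u_2| \leq 1$; when $|u_1-u_2| > 1$, the lemma gives the \emph{stronger} bound with exponent $s_2$, since raising the number $1/|u_1-u_2| < 1$ to a larger exponent produces a smaller value. The second claim (with exponent $s_2$) is handled symmetrically: it is exactly the lemma for $|u_1-u_2| > 1$, while for $|u_1-u_2| \leq 1$ the lemma's bound with exponent $s_1$ is again stronger because $1/|u_1-u_2| \geq 1$ and $s_1 \leq s_2$.

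Since the argument reduces to a two-case monotonicity observation, there is no substantive obstacle; the only genuine step is the short chain of arithmetic inequalities above.
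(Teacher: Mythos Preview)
Your proposal is correct and matches the paper's approach exactly: the paper states the corollary without proof, merely flagging the inequality $\max(0,k+\ell-n)\leq \min(k,\ell,k+\ell+\beta-n)$ as the reason it follows from Lemma~\ref{EG:Lem}. You have simply written out the monotonicity argument that this remark implies.
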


\begin{proof}[Proof of Lemma~\ref{lem:iterated}]

	We note the bound
	$$
		|R_0^\pm(\lambda^2)(x,y)|\les \frac{1}{|x-y|^{n-2}}+
		\frac{\lambda^{\frac{n-3}{2}}}
		{|x-y|^{\frac{n-1}{2}}},
	$$
	which follows from \eqref{Gn poly}.  To control
	local singularities, it is only necessary to 
	bound integrals of the form
	\begin{align*}
		\iint_{\R^{kn}}\frac{1}{|x-z_1|^{n-2}}
		\prod_{j=1}^n\frac{\la z_j \ra^{-\beta-}}
		{|z_j-z_{j+1}|^{n-2}}\, 
		d\vec z,
	\end{align*}
	where $z_{n+1}=y$ and $d\vec{z}=dz_1\, dz_2\,\dots\,
	dz_{n}$.  For this integral we iterate the first
	bound in Corollary~\ref{EGcor}.
	Since $\ell=n-2$ at each step
	we have that $k+\ell-n=k-2$, that is we improve the
	local singularity by two powers after each integral.
	So that, after $m$ integrals of this form the 
	singularity is $|x-z_{m}|^{n-2(m+1)}$, so that we
	need $n-2(m+1)\leq \frac{n-1}{2}$ to ensure the
	iterated integral is locally $L^2$.  We take $\kappa=
	\frac{n-1}{4}$ or $\frac{n-3}{4}$ whichever is an
	integer.  At the final step if $n \equiv 1 \mod 4$ we have,
	$$
		\int_{\R^n} \frac{\la z_\kappa \ra^{-\beta-}}
		{|x-z_\kappa|^{\frac{n+1}{2}}|z_\kappa-y|^{n-2}}
		\, dz_{\kappa} \les \left\{\begin{array}{ll} |x-y|^{\frac{3-n}{2}} & |x-y|\leq 1\\
		|x-y|^{-\frac{n+1}{2}} & |x-y|\geq 1		
		\end{array}
		\right. 
	$$
	which is in $L^2_y$
	uniformly in $x$.  On the other hand, if
	$n\equiv 3 \mod 4$ we have
	$$
		\int_{\R^n} \frac{\la z_\kappa \ra^{-\beta-}}
		{|x-z_\kappa|^{\frac{n+3}{2}}|z_\kappa-y|^{n-2}}
		\, dz_{\kappa} \les  \left\{\begin{array}{ll} |x-y|^{\frac{1-n}{2}} & |x-y|\leq 1\\
		|x-y|^{-\frac{n+3}{2}} & |x-y|\geq 1		
		\end{array} \right. 
	$$
	which is also in $L^2_y$
	uniformly in $x$.

	To determine the weight needed, we need only control
	integrals of the form
	\begin{align}\label{wtdL2 wts}
		\iint_{\R^{kn}}\frac{1}{|x-z_1|^{\frac{n-1}{2}}}
		\prod_{j=1}^n\frac{\la z_j \ra^{-\beta-}}
		{|z_j-z_{j+1}|^{\frac{n-1}{2}}}\, 
		d\vec z.
	\end{align}
	Here we use the second bound in
	Corollary~\ref{EGcor}
	for when $k,\ell=\frac{n-1}{2}$ we have
	$$
		\int_{\R^n} \frac{\la z\ra^{-\beta-}}
		{|z-u_1|^k|z-u_2|^\ell}\, dz
		\les 
		\big(\frac{1}{|u_1-u_2|}\big)^{\min(\frac{n-1}{2},
		\beta-1)} 
	$$
	If $\beta>\frac{n+1}{2}$, then $\min(\frac{n-1}{2},
			\beta-1)=\frac{n-1}{2}$ and 
	we can iterate this bound to
	see that
	\begin{align*}
		|\eqref{wtdL2 wts}| &\les 
		\frac{1}{|x-y|^{\frac{n-1}{2}}} \in 
		L^{2,-\f12-}_y(\R^n)
	\end{align*}
	uniformly in $x$.  Any other terms that appear in the
	product of free resolvents can be seen to be bounded
	by a sum of integrals of the form we considered here.

\end{proof}

\begin{lemma}\label{lem:IBP}

	If $k\in \mathbb N_0$, we have the bound
	$$
		\bigg|\int_0^\infty e^{it\lambda^2} \chi(\lambda)
		\lambda^k \, d\lambda\bigg| \les |t|^{-\frac{k+1}{2}}.
	$$

\end{lemma}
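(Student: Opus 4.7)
\medskip

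\noindent\textbf{Proof proposal.} The claim is trivial for $|t|\leq 1$, since the integral is bounded independently of $t$ while the right-hand side blows up. Assume $|t|>1$ and set $a=|t|^{-1/2}$. The plan is to treat the stationary phase region $\lambda\lesssim a$ by direct estimation, and the remaining region by integration by parts using the identity $e^{it\lambda^2}=\frac{1}{2it\lambda}\partial_\lambda e^{it\lambda^2}$ (valid on $\lambda>0$), followed by induction on $k$.

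For the low-frequency piece,
$$\Big|\int_0^a e^{it\lambda^2}\chi(\lambda)\lambda^k\,d\lambda\Big|\leq \int_0^a \lambda^k\,d\lambda = \frac{a^{k+1}}{k+1}\les |t|^{-(k+1)/2},$$
which already attains the desired bound. For the high piece $\int_a^\infty e^{it\lambda^2}\chi(\lambda)\lambda^k\,d\lambda$, integration by parts once produces the boundary contribution $-\frac{\chi(a)a^{k-1}}{2it}e^{ita^2}$ (the contribution at infinity vanishes since $\chi$ is compactly supported) together with
$$-\frac{1}{2it}\int_a^\infty \bigl[\chi'(\lambda)\lambda^{k-1}+(k-1)\chi(\lambda)\lambda^{k-2}\bigr]e^{it\lambda^2}\,d\lambda.$$
The boundary term has modulus $\les a^{k-1}/|t|=|t|^{-(k+1)/2}$ as required.

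The first integrand above, involving $\chi'$, is supported on $\lambda\sim \lambda_1$, well separated from both $0$ and $\infty$. Repeated integration by parts via the same identity produces no new boundary terms and gains a factor of $|t|^{-1}$ each time, so this piece is $O(|t|^{-N})$ for every $N$, in particular $\les |t|^{-(k+1)/2}$. The second integrand is handled by induction on $k$. For $k\geq 2$, adding back the segment $[0,a]$ (which costs at most $a^{k-1}$, contributing $|t|^{-(k+1)/2}$ after the $\frac{1}{|t|}$ prefactor) reduces the problem to bounding $\frac{k-1}{2it}\int_0^\infty e^{it\lambda^2}\chi(\lambda)\lambda^{k-2}\,d\lambda$, which by the inductive hypothesis is $\les|t|^{-1}\cdot|t|^{-(k-1)/2}=|t|^{-(k+1)/2}$. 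The base cases are clean: for $k=1$ the coefficient $(k-1)$ vanishes, and for $k=0$ one estimates directly $\frac{1}{|t|}\int_a^\infty \lambda^{-2}\,d\lambda \les |t|^{-1}a^{-1}=|t|^{-1/2}$, matching $|t|^{-(k+1)/2}$.

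There is no real obstacle here; this is a routine boundary stationary-phase bound reflecting the scaling of the width $|t|^{-1/2}$ of the critical region together with the vanishing of the amplitude to order $k$ at the critical point. The only point requiring mild attention is the base case $k=0$, where the iterated integrand acquires a formally non-integrable factor $\lambda^{-2}$; compact support of $\chi$ together with the restriction to $[a,\infty)$ keeps the integral bounded by $a^{-1}=|t|^{1/2}$, which is precisely what the $|t|^{-1}$ prefactor absorbs.
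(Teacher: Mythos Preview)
Your proof is correct. The approach differs from the paper's in a notable way: the paper integrates by parts repeatedly on the full half-line $[0,\infty)$, using that the boundary terms at $\lambda=0$ vanish because the surviving power of $\lambda$ is positive, and then extracts the final half-power of $|t|$ either by one last integration by parts (when $k$ is odd) or by Parseval via $\|\widehat{e^{it(\cdot)^2}}\|_\infty\les|t|^{-1/2}$ and $\|\chi^\vee\|_1\les 1$ (when $k$ is even). You instead split at $a=|t|^{-1/2}$, bound the inner region trivially, and run an induction in $k$ on the outer region; this avoids Parseval entirely and obtains the last half-power from the width of the critical region and the $a^{-1}$ cost on the high side. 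Your argument is in fact closer in spirit to the paper's proof of the subsequent Lemma~\ref{lem:fauxIBP}, which treats non-integer powers by the same splitting. Each approach has its merits: the paper's is slightly shorter and makes the even/odd dichotomy explicit, while yours is more elementary and uniform across parity.
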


\begin{proof}

	We employ the identity $\frac{d}{d\lambda}e^{it\lambda^2}=2it\lambda e^{it\lambda^2}$ to integrate by parts against the
	imaginary Gaussian.  Formally, upon integrating
	by parts $m$ times we have
	\begin{align*}
		\bigg|\int_0^\infty e^{it\lambda^2} \chi(\lambda)
		\lambda^k \, d\lambda\bigg| &\les
		\sum_{j=0}^{m-1} \frac{\chi(\lambda)\lambda^{k-1-2j}}{|t|^{j+1}}
		\bigg|_{\lambda=0}^\infty
		+\frac{1}{|t|^{m}}\bigg|
		\int_0^\infty e^{it\lambda^2}
		\chi(\lambda)\lambda^{k-2m}\, d\lambda\bigg| +
		O(|t|^{-\ell}).
	\end{align*}
	The first term collects the boundary terms in the integration
	by parts, while the final $O(|t|^{-\ell})$ term 
	for arbitrary $\ell\geq 0$ is
	obtained from any term where the derivative acts on
	the cut-off function $\chi(\lambda)$.  This follows
	since $\chi^\prime(\lambda)$ is supported on 
	the annulus $\lambda \approx 1$ and one can integrate
	by parts arbitrarily many times with no boundary
	terms or convergence issues.

	If $k$ is odd, we select $m=\frac{k-1}{2}$. The boundary terms are all 
	zero since $k-1-2j\geq 2$.
	Meanwhile, one additional integration by parts yields
	$$
		\frac{1}{|t|^{\frac{k-1}{2}}}\bigg|\int_0^\infty
		e^{it\lambda^2} \lambda \chi(\lambda) 
		d\lambda\bigg| \les \frac{1}{|t|^{\frac{k+1}{2}}}
		+\frac{1}{|t|^{\frac{k+1}{2}}}
		\int_0^\infty e^{it\lambda^2} \chi^{\prime}
		(\lambda)\, d\lambda \les |t|^{-\frac{k+1}{2}}.
	$$
	
	On the other hand, if $k$ is even, we select
	$m=\frac{k}{2}$ and note that all the boundary
	terms are zero since $k-1-2j\geq 1$.
	The integral bound
	$$
		\frac{1}{|t|^{\frac{k}{2}}}\bigg|\int_0^\infty
		e^{it\lambda^2} \chi(\lambda) 
		d\lambda\bigg| \les \frac{1}{|t|^{\frac{k+1}{2}}}.
	$$
	follows from Parseval's Identity since
	$$
		\| \widehat{e^{it(\cdot)^2}}\|_\infty \les 
		|t|^{-\f12}, \qquad  \textrm{ and } \qquad
		\|\chi^{\vee}(\cdot)\|_1\les 1.
	$$

\end{proof}

The above calculations do not immediately apply to
bounding integrals of the form
$$
	\int_0^\infty e^{it\lambda^2} f(\lambda)\, d\lambda
$$
when $f$ and its derivatives are bounded by powers of
$\lambda$.  Accordingly, we have the following 
oscillatory integral bound.

\begin{lemma}\label{lem:fauxIBP}

	For a fixed $\alpha>-1$,
	let $f(\lambda)=\widetilde O_{k+1}(\lambda^\alpha)$ be supported on the 
	interval $[0,\lambda_1]$ for some $0<\lambda_1\les 1$.
	Then, if $k$ satisfies $-1<\alpha-2k< 1$ we have
	\begin{align*}
		\bigg|\int_0^\infty e^{it\lambda^2} 
		f(\lambda)\, d\lambda\bigg|&\les |t|^{-\frac{\alpha+1}{2}}.
	\end{align*}
	
\end{lemma}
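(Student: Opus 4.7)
The plan is to split the integral at the stationary-phase scale $\lambda=|t|^{-1/2}$ and treat the two pieces by separate methods. The hypothesis $-1<\alpha-2k<1$ is tailored so that exactly $k+1$ integrations by parts produce the desired decay rate, while the pointwise bound handles the low-frequency regime.

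For the low-frequency piece $\lambda\in[0,|t|^{-1/2}]$, the bound $|f(\lambda)|\les\lambda^\alpha$ (the $j=0$ case of $\widetilde O_{k+1}(\lambda^\alpha)$) together with $\alpha>-1$ gives
\begin{equation*}
\bigg|\int_0^{|t|^{-1/2}} e^{it\lambda^2} f(\lambda)\,d\lambda\bigg|\les \int_0^{|t|^{-1/2}}\lambda^\alpha\,d\lambda\les |t|^{-(\alpha+1)/2}.
\end{equation*}

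For the high-frequency piece on $[|t|^{-1/2},\lambda_1]$, I integrate by parts $k+1$ times using $e^{it\lambda^2}=\frac{1}{2it\lambda}\frac{d}{d\lambda}e^{it\lambda^2}$. Setting $Lg:=\frac{1}{2it}(g/\lambda)'$, a straightforward induction gives $|L^j f(\lambda)|\les \lambda^{\alpha-2j}/|t|^j$ for $0\leq j\leq k+1$, since each application of $L$ consumes one derivative and lowers the $\lambda$-power by two. The boundary terms at $\lambda=\lambda_1$ vanish by the support hypothesis on $f$. The boundary term produced at the $j$-th IBP step at $\lambda=|t|^{-1/2}$ has magnitude $\lambda^{\alpha-2j+1}/|t|^j$, which evaluates to $|t|^{-(\alpha+1)/2}$ independently of $j\in\{1,\ldots,k+1\}$. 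After all $k+1$ iterations the remaining integral is bounded by
\begin{equation*}
\frac{1}{|t|^{k+1}}\int_{|t|^{-1/2}}^{\lambda_1}\lambda^{\alpha-2k-2}\,d\lambda \les \frac{|t|^{(2k+1-\alpha)/2}}{|t|^{k+1}}=|t|^{-(\alpha+1)/2},
\end{equation*}
where I used $\alpha-2k-2<-1$ (equivalent to $\alpha-2k<1$) so that the integral is dominated by its lower endpoint.

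The main obstacle is the inductive bookkeeping for $L^j f$ and ensuring the boundary behaviour at $\lambda_1$ is harmless. The former is routine once one notes that $Lg = g'/(2it\lambda) - g/(2it\lambda^2)$, so by Leibniz each subsequent derivative either picks up another power of $\lambda^{-1}$ or consumes a derivative of the original $f$, preserving the form $\lambda^{\alpha-2j}/|t|^j$ and using exactly $j$ of the available $k+1$ derivative bounds. The latter is automatic in the intended applications, where $f$ arises multiplied by a smooth cutoff $\chi(\lambda)$ supported in $[0,\lambda_1)$ so that $f$ and all its derivatives vanish at $\lambda_1$.
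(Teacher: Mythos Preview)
Your proof is correct and follows essentially the same approach as the paper's: split the integral at $\lambda=|t|^{-1/2}$, bound the low piece trivially using $\alpha>-1$, and integrate by parts on the high piece until the hypothesis $\alpha-2k<1$ makes the remaining integral converge at its lower endpoint. The only cosmetic difference is the order of operations---the paper first integrates by parts $k$ times over the full interval $[0,\infty)$ (using $\alpha-2k>-1$ to ensure the boundary terms at $0$ vanish), then splits and does one further integration by parts, whereas you split first and perform all $k+1$ integrations by parts on the high piece; the arithmetic and the use of the hypotheses are identical.
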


\begin{proof}

	As in the proof of
	Lemma~\ref{lem:IBP}
	we can integrate by parts $k$ times without having
	boundary terms  since $\alpha-2k+2>1$.  
	At this point, we need only bound
	\begin{align}\label{ibplemeqn}
		\frac{1}{|t|^k}	\int_0^\infty e^{it\lambda^2}
		g(\lambda)\, d\lambda.
	\end{align}
	Here $g(\lambda)=\widetilde O_1(\lambda^{\alpha-2k})$
	is again supported on $[0,\lambda_1]$.  By the
	definition of the integer $k$ we have 
	$-1<\alpha-2k< 1$ so that further integration by parts is not possible.
	Without loss of generality we take $t>0$ and
	we break the integral into two parts,
	\begin{align*}
		|\eqref{ibplemeqn}| &\les \frac{1}{|t|^k}
		\int_0^{t^{-\f12}} |g(\lambda)|\,d\lambda+
		\frac{1}{|t|^k}\bigg|
		\int_{t^{-\f12}}^\infty e^{it\lambda^2} g(\lambda)
		\, d\lambda\bigg|.
	\end{align*}
	The bound for the first integral follows from integration,
	\begin{align*}
		\int_0^{t^{-\f12}} |g(\lambda)|\,d\lambda
		\les \int_0^{t^{-\f12}} \lambda^{\alpha-2k}\, 
		d\lambda \les |t|^{-\frac{\alpha}{2}+k-\f12}.
	\end{align*}
	Since $-1<\alpha-2k$ this is integrable at zero.
	For the second integral, we integrate by parts again.
	As $\alpha-2k<1$ there is no a boundary term at
	infinity, but we do have one at $t^{-\f12}$.  Thus
	we see
	\begin{align*}
		\int_{t^{-\f12}}^\infty e^{it\lambda^2} g(\lambda)
		\, d\lambda&\les\frac{g(\lambda)}{\lambda t}
		\bigg|_{\lambda=t^{-\f12}} +\frac{1}{t}
		\int_{t^{-\f12}}^\infty \lambda^{\alpha-2k-2}
		\, d\lambda \les |t|^{-\frac{\alpha}{2}+k-\f12}.
	\end{align*}
	The final integrand is integrable as $\alpha-2k-2<-1$.

\end{proof}

We need the following lemma which is a modification of
stationary phase.  

\begin{lemma}\label{lem:statphase}

We have the bound
\begin{align*} 
\Big| \int_{t^{-\frac12}}^1 e^{it(\lambda-\lambda_0)^2}F(\lambda,x,y)\chi(\lambda)d\lambda\Big|
\les t^{-\frac12} \sup_\lambda |F(\lambda,x,y)| + t^{-\frac34}
\Big[\int_{t^{-\frac12}}^1 \big| {\textstyle \frac{\partial F}{\partial \lambda}}(\lambda,x,y)\big|^2\,d\lambda\Big]^{\frac12}.
\end{align*}

\end{lemma}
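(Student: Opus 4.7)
The plan is to split the integration interval into a neighborhood of the stationary point $\lambda_0$ and its complement, handling the near region by a trivial $L^\infty$ bound and the far region by a single integration by parts. Write
\begin{equation*}
I_{\rm near} := [t^{-1/2}, 1] \cap \{\lambda : |\lambda-\lambda_0| < t^{-1/2}\}, \qquad
I_{\rm far} := [t^{-1/2}, 1] \setminus I_{\rm near}.
\end{equation*}
Since $I_{\rm near}$ has length at most $2t^{-1/2}$, the triangle inequality immediately bounds its contribution by $2t^{-1/2} \sup_\lambda |F(\lambda,x,y)|$, which matches the first term in the claimed estimate. If $\lambda_0$ lies far from $[t^{-1/2},1]$, then $I_{\rm near}$ is empty and this step is skipped.

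On $I_{\rm far}$ I use $e^{it(\lambda-\lambda_0)^2} = \frac{1}{2it(\lambda-\lambda_0)}\frac{d}{d\lambda}e^{it(\lambda-\lambda_0)^2}$ and integrate by parts once, with $\psi(\lambda) := F(\lambda,x,y)\chi(\lambda)$. The boundary terms arise at $\lambda = \lambda_0 \pm t^{-1/2}$ (whenever these lie in $I_{\rm far}$) and at $\lambda = 1$ or $\lambda = t^{-1/2}$ (whenever these belong to $I_{\rm far}$); in each case $|\lambda-\lambda_0| \geq t^{-1/2}$, so the corresponding term is controlled by $t^{-1/2} \sup|F|$. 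The remaining integral is
\begin{equation*}
-\int_{I_{\rm far}} e^{it(\lambda-\lambda_0)^2}\,\frac{d}{d\lambda}\!\left(\frac{\psi(\lambda)}{2it(\lambda-\lambda_0)}\right) d\lambda,
\end{equation*}
and expanding the derivative produces three pieces.

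When the derivative falls on $F$, Cauchy--Schwarz together with $\int_{|\lambda-\lambda_0|>t^{-1/2}} (\lambda-\lambda_0)^{-2}\,d\lambda \les t^{1/2}$ gives
\begin{equation*}
\frac{1}{2t}\int_{I_{\rm far}} \frac{|\partial_\lambda F|}{|\lambda-\lambda_0|}\,d\lambda
\;\les\; \frac{1}{t}\cdot t^{1/4} \cdot \Big[\int_{t^{-1/2}}^1 |\partial_\lambda F|^2\,d\lambda\Big]^{1/2}
\;=\; t^{-3/4}\,\|\partial_\lambda F\|_{L^2},
\end{equation*}
which is exactly the second term of the target bound. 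When the derivative falls on $\chi$, the support of $\chi'$ sits near $\lambda \approx 1$ and is separated from $\lambda_0$ by an order-one distance on the integration domain relevant to us (alternatively, one can just use $|\lambda-\lambda_0|\geq t^{-1/2}$ crudely), so this piece contributes at most $t^{-1}\sup|F|$, which is absorbed. Finally, when the derivative hits $(\lambda-\lambda_0)^{-1}$, the resulting piece is $\frac{1}{2t}\int_{I_{\rm far}}|F|(\lambda-\lambda_0)^{-2}\,d\lambda \les t^{-1/2} \sup|F|$, again absorbed in the first target term.

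The main obstacle is purely one of bookkeeping: the case analysis must be arranged so that the estimate is uniform regardless of whether $\lambda_0$ lies inside $[t^{-1/2},1]$, at its boundary, or outside, and one must verify that all boundary contributions from integration by parts (including those produced artificially at $\lambda_0 \pm t^{-1/2}$ by the splitting) carry the factor $|\lambda-\lambda_0|^{-1} \leq t^{1/2}$. No higher derivatives of $F$ are ever needed and a single integration by parts suffices.
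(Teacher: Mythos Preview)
Your proof is correct and follows the classical van der Corput route: excise a $t^{-1/2}$-neighborhood of the stationary point, use the trivial $L^\infty$ bound there, and integrate by parts once against $\frac{1}{2it(\lambda-\lambda_0)}\frac{d}{d\lambda}e^{it(\lambda-\lambda_0)^2}$ on the complement. All three pieces coming from the product rule, as well as the boundary terms, are handled correctly. One small remark: your claim that the $\chi'$ term contributes $t^{-1}\sup|F|$ relies on $\lambda_0$ being separated from the support of $\chi'$ by an order-one distance, which need not hold; your parenthetical alternative using $|\lambda-\lambda_0|\geq t^{-1/2}$ is the right fix and gives $t^{-1/2}\sup|F|$, which is still absorbed.

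The paper's proof takes a somewhat different path. Rather than excising a neighborhood of $\lambda_0$, it distinguishes cases according to the position of $\lambda_0$ relative to the interval. When $\lambda_0 \ll t^{-1/2}$ or $\lambda_0 \gg 1$, one has $|\lambda-\lambda_0|\gtrsim \lambda$ throughout, and a direct integration by parts works with no excision. In the remaining case $t^{-1/2}\lesssim \lambda_0 \lesssim 1$, the paper writes $e^{it(\lambda-\lambda_0)^2}=e^{it\lambda_0^2}e^{it\lambda^2}e^{-2it\lambda_0\lambda}$ and integrates by parts against $e^{it\lambda^2}$ (so the denominator is $\lambda$ rather than $\lambda-\lambda_0$), treating $e^{-2it\lambda_0\lambda}$ as part of the amplitude. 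Differentiating this extra oscillatory factor produces a $\lambda_0/\lambda$ term, which is controlled using $\lambda_0\lesssim 1$ and $\lambda\geq t^{-1/2}$. Your approach is arguably cleaner in that it treats all positions of $\lambda_0$ uniformly and never requires the case split; the paper's approach avoids introducing artificial boundary points at $\lambda_0\pm t^{-1/2}$ but pays for it with the case analysis. Both yield exactly the same bound with a single integration by parts.
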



\begin{proof}

	Assume that $t>0$, the
	proof for $t<0$ proceeds identically.
	We note that in the case when $\lambda_0\ll t^{-\f12}$
	or $\lambda_0\gg 1$ we can integrate by parts
	safely against $e^{it(\lambda-\lambda_0)^2}$ since
	$|\lambda-\lambda_0|\gtrsim \lambda$.  Then, we have
	\begin{multline*}
		\Big| \int_{t^{-\frac12}}^1 
		e^{it(\lambda-\lambda_0)^2}F(\lambda,x,y)
		\chi(\lambda)d\lambda\Big|\\
		\les \frac{F(\lambda,x,y)\chi(\lambda)}{\lambda t}
		\bigg|^1_{t^{-\f12}}+\frac{1}{t}\int_{t^{-\f12}}^1
		\bigg|\frac{F(\lambda,x,y)}{\lambda^2}\bigg|
		+\bigg|\frac{\partial_\lambda F(\lambda,x,y)}
		{\lambda} \bigg|\, d\lambda
	\end{multline*}
	The first two terms can be seen to be bounded by
	$t^{-\f12}\sup_\lambda |F(\lambda,x,y)|$.  The 
	second term is bounded by applying Cauchy-Schwartz.
	
	On the other hand, if $t^{-\f12}\les \lambda_0\les 1$
	we instead consider
	\begin{align*}
		e^{it\lambda_0^2}\int_{t^{-\f12}}^1 
		e^{it\lambda^2} e^{-2it\lambda_0\lambda} 
		F(\lambda,x,y)\, d\lambda
	\end{align*}
	Here we can integrate by parts once, and ignore the
	constant $e^{it\lambda_0^2}$ to bound
	\begin{align*}
		t^{-\f12}\sup_{\lambda}|F(\lambda,x,y)|
		+\frac{1}{t}\int_{t^{-\f12}}^1 |F(\lambda,x,y)|
		(\lambda_0 \lambda^{-1}+\lambda^{-2})
		+\frac{|\partial_\lambda F(\lambda,x,y)|}{\lambda}
		\, d\lambda
	\end{align*}
	Since $\lambda_0\les 1$, we see that
	\begin{align*}
		\frac{1}{t}\int_{t^{-\f12}}^1 &|F(\lambda,x,y)|
		(\lambda_0 \lambda^{-1}+\lambda^{-2})\, d\lambda
		\les \frac{\sup_\lambda |F(\lambda,x,y)|}{t}
		\int_{t^{-\f12}}^1 \bigg(\frac{\lambda_0}{\lambda}
		+\frac{1}{\lambda^2}\bigg)\, d\lambda\\
		&\les \frac{\sup_\lambda |F(\lambda,x,y)|}{t}
		\int_{t^{-\f12}}^1 \bigg(t^{\f12}
		+\frac{1}{\lambda^2}\bigg) \, d\lambda	
		\les \frac{\sup_\lambda 
		|F(\lambda,x,y)|}{t^{\f12}}	
	\end{align*}
	The final term is bounded by Cauchy-Schwartz,
	\begin{align*}
		\frac{1}{t}\int_{t^{-\f12}}^1 
		\frac{|\partial_\lambda F(\lambda,x,y)|}{\lambda}
		\, d\lambda &\les \frac{1}{t} 
		\bigg(\int_{t^{-\f12}}^{\infty} \lambda^{-2}\, 
		d\lambda \bigg)^{\f12}\bigg(\int_{t^{-\f12}}^1 
		|\partial_\lambda F(\lambda,x,y)|	\, d\lambda
		\bigg)^{\f12}\\
		&\les |t|^{-\f34}\bigg(\int_{t^{-\f12}}^1 
		|\partial_\lambda F(\lambda,x,y)|\, d\lambda
		\bigg)^{\f12}
	\end{align*}

\end{proof}

Finally, we remark that the oscillatory bounds in
Lemmas~\ref{lem:IBP} and~\ref{lem:statphase} hold if the
cut-off function $\chi(\lambda)$ is supported on
$[0,\lambda_1]$ for any finite $\lambda_1$.

\end{document}